\documentclass[12pt,a4paper]{article}

\usepackage[utf8]{inputenc}
\usepackage[english]{babel}

\usepackage{amsmath,amssymb,amsfonts,mathrsfs}

\usepackage[amsmath,thmmarks]{ntheorem}


\numberwithin{equation}{section}

\newtheorem{thm}{Theorem}[section]

\newtheorem{lem}[thm]{Lemma}
\newtheorem{prop}[thm]{Proposition}
\newtheorem{rem}[thm]{Remark}
\newtheorem{defn}[thm]{Definition}
\newtheorem{axm}[thm]{Axiom}
\newtheorem{exl}[thm]{Example}
\newtheorem{ques}[thm]{Question}
\newtheorem{notn}[thm]{Notation}
\newtheorem{conv}[thm]{Convention}

\theoremstyle{nonumberplain}
\theorembodyfont{\normalfont}
\theoremsymbol{\ensuremath{\square}}
\newtheorem{proof}{Proof}

\usepackage{hyperref}

\title{On a Completion of Cohomological Functors Generalising Tate Cohomology I}
\date{1 April 2026}
\author{Max Gheorghiu}

\usepackage[margin=2.5cm]{geometry}

\usepackage{tikz-cd}
\usepackage{stmaryrd}
\usepackage{graphicx}

\newcommand{\bigast}{\mathop{\scalebox{2}{\raisebox{-0.1ex}{$\ast$}}}}%
\newcommand{\triangleleftneq}{%
  \mathrel{\ooalign{$\lneq$\cr\raise.22ex\hbox{$\lhd$}\cr}}}

\begin{document}

\setlength{\parindent}{0cm}

\maketitle

\begin{abstract}
Tate cohomology has been generalised by several authors using different constructions that have applications in group theory, ring theory and homotopical algebra. Therefore, there is a need for a uniform account that explains why their underlying approaches all lead to the same conclusions. The key notion in such a uniform theory is a specific completion of cohomological functors that is constructed under mild assumptions. This completion takes Tate cohomology to settings where it has never been introduced such as in condensed mathematics. Through the latter, one can define Tate cohomology for any $T1$ topological group.
\end{abstract}

\tableofcontents

\section{Introduction}\label{sec:intro}

Originally, Tate cohomology was developed only for finite groups by J.\! Tate in the paper~\cite{tat52} from 1952 motivated by class field theory. As it unites group homology and group cohomology of finite groups in a convenient manner, it attracted the attention of group theorists~\cite[p.~128]{bro82}. It was generalised to groups of finite virtual cohomological dimension in F.\! T.\! Farrell's paper~\cite{far77} from 1977 and then to all groups in D.\! J.\! Benson and J.\! F.\! Carlson's paper~\cite{ben92} from 1992, F.\! Goichot's paper~\cite{goi92} from 1992 and G.\! Mislin's paper~\cite{mis94} from 1994. The different approaches of the above papers have been used in generalised constructions by several authors with applications to group theory, ring theory and homotopical algebra. There is a need for a uniform account that explains why the approaches underlying these constructions all lead to the same conclusions: a theory that does not only work for discrete groups or modules over a ring, but also in greater generality. The main goal of this paper to provide a uniform treatment of the entire theory, putting in place detailed proofs that establish the equivalence of different definitions of the different authors. \\

This is the first in a series of two papers where we establish the fundamental properties of our resulting uniform theory in the second paper~\cite{ghe24}. For instance, our theory detects whether a group (object) has finite cohomological dimension. It satisfies a form of dimension shifting, meaning that one can express a Tate cohomology group of any degree isomorphically in terms of any other degree. It satisfies an Eckmann–Shapiro Lemma, meaning that Tate cohomology of finite index (closed) subgroups of a discrete or profinite group can be described by Tate cohomology of the entire group. Cup products and Yoneda products are developed in Tate cohomology under certain conditions and their key properties are established. We refer to the paper for the details. \\

Since the theory developed in this present paper is based on the above mentioned approaches, let us briefly summarise them. In the way they are originally phrased, the approaches pertain to all groups and more generally, to modules over a ring. D.\! J.\! Benson and J.\! F.\! Carlson in~\cite{ben92} obtain their Tate cohomology groups $\widehat{H}^{\bullet}(G, A)$ by colimits of (stable) Hom-functors using a stabilisation process, making their approach akin to stable module theory. F.\! Goichot in~\cite{goi92} defines Tate cohomology groups $\widehat{H}^{\bullet}(G, A)$ using a hypercohomology construction which he attributes to P.\! Vogel. Thus, we shall refer to this approach as P.\! Vogel's approach from now on. Two approaches can be found in G.\! Mislin's paper~\cite{mis94}. In the first, Tate cohomology groups $\widehat{H}^{\bullet}(G, A)$ are obtained by colimits of ordinary cohomology groups through connecting homomorphisms. In the second, Tate cohomology groups $\widehat{H}^{\bullet}(G, A)$ are obtained as colimit of left satellite functors of ordinary cohomology groups. Left satellite functors were treated as an alternative foundation of homological algebra in the third chapter of H.\! Cartan and S.\! Eilenberg's book on homological algebra~\cite{car56} from 1956. We refer to Section~\ref{sec:constructions} for the details of each of these approaches. \\

Our theory generalises a few constructions from the literature that we outline in the following. It generalises F.\! T.\! Farrell's approach from~\cite{far77} as we demonstrate in~\cite[Lemma~1.11]{ghe24}. F.\! T.\! Farrell formulated it originally for groups of finite virtual cohomological dimension where P.\! Symonds extended it to profinite groups of finite virtual cohomological dimension in~\cite[p.~34]{sym07}. Our theory applies to all groups and to all profinite groups. L.\! L.\! Avramov and O.\! Veliche generalised the above mentioned approaches to Ext-functors of modules over a commutative Noetherian ring $R$ and used them to characterise when $R$ is regular, complete intersection or Gorenstein. Our theory encompasses their ring theoretic work. At the end of the introduction we detail how our theory relates to generalised constructions of Tate cohomology in the realm of homotopical algebra. The generality of our theory prompts us to introduce the following terminology. Whenever we apply our theory it to groups or to topological groups, we follow the convention from~\cite{kro95} and term the resulting generalisation complete cohomology. Analogously, we term the generalised Ext-functors resulting from our theory completed Ext-functors. \\

We provide a detailed outline of our theory in the next three paragraphs. Connecting homomorphisms can be seen as a starting point of its development. They are one of the most important structures for cohomology. Particularly in group cohomology, they constitute fundamental computational tools. Thus far, authors have only demonstrated that the complete cohomology groups $\widehat{H}^{\bullet}(G, A)$ resulting from the above approaches are isomorphic. Even if authors consider connecting homomorphisms $\widehat{\delta}^{\bullet}: \widehat{H}^{\bullet}(G, -) \rightarrow \widehat{H}^{\bullet+1}(G, -)$, they do not devise any explicit formulae for them which created issues in the following two instances. G.\! Mislin in~\cite{mis94} provides a connecting homomorphism for D.\! J.\! Benson and J.\! F.\! Carlson's approach that is incorrect (Remark~\ref{rem:mislinsmistake}). S.\! Guo and L.\! Liang in~\cite{guo23} provide a connecting homomorphism for P.\! Vogel's approach that might render it incompatible to the other approaches (Question~\ref{ques:comparewithguoliang}). To remedy these issues, we provide the first explicit formulae in the literature for the connecting homomorphisms in terms of each approach (Theorem~\ref{thm:satelliteconnmorphisms}, Definition~\ref{defn:resolconnmorph}, Theorem~\ref{thm:naiveconnmorph}, Definition~\ref{defn:hyperconnmorph}). As it has not been done in the literature either, we develop for any morphism $f: A \rightarrow B$ the first formulae for induced morphisms $\widehat{H}^n(G, f): \widehat{H}^n(G, A) \rightarrow \widehat{H}^n(G, B)$ in terms of each construction (Theorem~\ref{thm:inducedsatellitemorphs}, Definition~\ref{defn:resolindmorph}, Definition~\ref{defn:inducednaivemorph}, Definition~\ref{defn:extchainmaps}). Our novel formulae for connecting homomorphisms and induced morphisms provide the cornerstone for our uniform treatment of the entire theory in which we prove that all approaches are equivalent. \\

The key notion in our uniform treatment of the theory is a completion of cohomological functors. Given the prominence of connecting homomorphisms, one can view cohomological functors as a generalisation of group cohomology only preserving the connecting homomorphism. More specifically, if $\mathcal{C}$, $\mathcal{D}$ are abelian categories, then a family of additive functors $(T^n: \mathcal{C} \rightarrow \mathcal{D})_{n \in \mathbb{Z}}$ is a cohomological functor if there are connecting homomorphisms $(\delta^n: T^n \rightarrow T^{n+1})_{n \in \mathbb{Z}}$ satisfying two straightforward axioms~\cite[p.~201--202]{kro95}. In particular, if $G$ is a discrete group, $R$ a discrete ring and $A$ a  discrete $R$-module, then  setting $H_R^n(G, -) = 0$ and $\mathrm{Ext}_R^n(A,-) = 0$ for $n < 0$ renders group cohomology and Ext-functors into cohomological functors~\cite[p.~201]{kro95}, \cite[p.~295]{mis94}. G.\! Mislin shows in~\cite{mis94} that the generalisations of Tate cohomology by his two approaches can be obtained by a completion of cohomological functors. Following the convention from~\cite[p.~197/202]{kro95}, a Mislin completion of a cohomological functor $T^{\bullet}: \mathcal{C} \rightarrow \mathcal{D}$ is another cohomological functor $\widehat{T}^{\bullet}: \mathcal{C} \rightarrow \mathcal{D}$ together with a morphism $\Phi^{\bullet}: T^{\bullet} \rightarrow \widehat{T}^{\bullet}$ such that $\widehat{T}^n(P) = 0$ for any projective $P \in \mathrm{obj}(\mathcal{C})$ and $n \in \mathbb{Z}$. It satisfies the universal property that any morphism $T^{\bullet} \rightarrow V^{\bullet}$ to a cohomological functor $V^{\bullet}: \mathcal{C} \rightarrow \mathcal{D}$ also vanishing on projectives factors uniquely through $\Phi^{\bullet}$~\cite[Definition~2.1]{mis94}. By its universal property, any Mislin completion is unique up to isomorphism~\cite[p.~202]{kro95}. This completion of cohomological functors tracing back to G.\! Mislin's paper~\cite{mis94} bears the advantage that it relates group cohomology with complete cohomology in a unique manner. \\

Let us detail the extent and the generality of the uniform theory resulting from our use of Mislin completions. The premise is to formulate the approaches by D.\! J.\! Benson, J.\! F.\! Carlson, P.\! Vogel and G.\! Mislin in terms of Mislin completions. As G.\! Mislin's concern was group cohomology, he only constructed in~\cite{mis94} Mislin completions of functors $T: \mathcal{C} \rightarrow \mathcal{D}$ in case $\mathcal{C}$ and $\mathcal{D}$ are categories of modules over a ring. We generalise his two approaches under the mild assumptions that $\mathcal{C}$ has enough projective objects and that in $\mathcal{D}$ all countable direct limits exist and are exact (Theorem~\ref{thm:satellitemislin} and Theorem~\ref{thm:resolsatellites}). Direct limits are a specific kind of colimits~\cite[p.~14]{rib10} and are vital to these constructions. We establish the unprecedented fact that there are countably many distinct constructions of Mislin completions (Theorem~\ref{thm:countmanyconstrs}). In particular, that implies that there are countably many distinct generalisations of Tate cohomology. If $\mathbf{Ab}$ denotes the category of abelian groups, then we generalise P. Vogel’s approach from~\cite{goi92} as well as D. J. Benson and J. F. Carlson’s approach~\cite{ben92} so that they give rise to the Mislin completion of Ext-functors $\widehat{\mathrm{Ext}}_{\mathcal{C}}^{\bullet}(A, -): \mathcal{C} \rightarrow \mathbf{Ab}$ (Theorem~\ref{thm:resolbenson} and Theorem~\ref{thm:hyperresol}). As indicated above, we develop the first explicit formulae for connecting homomorphisms and induced morphisms of Mislin completions in terms of each of the above mentioned constructions. In summary, Mislin completions together with the formulae for their connecting homomorphisms and induced morphisms form our uniform theory that generalises D.\! J.\! Benson, J.\! F.\! Carlson, P.\! Vogel and G.\! Mislin's approaches and that thus generalises Tate cohomology. \\

As a remarkable feature, our generalisation of Tate cohomology is applicable to condensed mathematics and thus to most topological groups of interest. Condensed mathematics a powerful novel theory developed by D.\! Clausen and P.\! Scholze in 2018~\cite{sch19}. In a nutshell, it provides a unified approach for studying topological groups, rings and modules~\cite[p.~6]{sch19}. In~\cite{sch20}, P.\! Scholze writes that he wants ``to make the strong claim that in the foundations of mathematics, one should replace topological spaces with condensed sets''. Here, a condensed set can be thought as a sheaf of sets. Let us substantiate P.\! Scholze's claim. In practice, most topological spaces (resp.\! groups, rings, etc) are $T1$, meaning that all their points are closed. These can be functorially turned into condensed sets (resp.\! groups, rings, etc), meaning that one can work only with the corresponding condensed object. We refer to Section~\ref{sec:condmaths} for the details. Condensed mathematics provides an example of a Mislin completion that cannot be treated through any previous framework from the literature. If $\mathcal{R}$ is a condensed ring, $\mathrm{Cond}(\mathrm{Mod}(\mathcal{R}))$ the category of condensed $\mathcal{R}$-modules and $\mathrm{Cond}(\mathbf{Ab})$ the category of condensed abelian groups, then the  Mislin completion of the enriched Ext-functor
\[ \underline{\widehat{\mathrm{Ext}}}_{\mathcal{R}}^{\bullet}(A, -): \mathrm{Cond}(Mod(\mathcal{R})) \rightarrow \mathrm{Cond}(\mathbf{Ab}) \]
can be constructed only by means of our uniform theory (Remark~\ref{rem:differencetoguoliang}). One can define complete cohomology for all condensed groups and thus, for all $T1$ topological groups, meaning for most topological groups as the following list demonstrates (Theorem~\ref{thm:condgroupcohom}).
\begin{itemize}
\item Complete cohomology can be defined for all Lie groups and in particular, for $p$-adic analytic groups.
\item Complete cohomology can be defined for any Galois group or equivalently, for any profinite group. More specifically, a profinite group is an inverse limit of finite discrete groups, meaning that it can be assembled from finite groups in a particular manner. Profinite groups are of interest because they possess properties of infinite groups such as finite generation, but also properties of finite groups such as Sylow subgroups. Moreover, they have recently featured in $3$-manifold topology via the study of profinite rigidity of their fundamental groups. Condensed mathematics is particularly well suited for investigating the cohomology of profinite groups (Lemma~\ref{lem:solidandctscohom}).
\item Complete cohomology can be defined for totally disconnected locally compact (tdlc) groups that are becoming increasingly fashionable. Tdlc groups can be defined as locally profinite groups, thus they generalise both discrete and profinite groups. One can interpret their profinite open subgroups as taking the role of finite subgroups in discrete groups. Tdlc groups play a crucial role in understanding the structure of the more general locally compact groups. Examples of tdlc groups also include $p$-adic analytic groups and automorphisms of locally finite trees with the compact-open topology.
\item However, complete cohomology cannot be defined for algebraic groups via condensed mathematics. Namely, any topological group can be turned into a condensed group because any cartesian product of two topological spaces results in a product of their corresponding condensed sets. In contrast, a product of two algebraic varieties with the Zariski topology does not result in a product of the corresponding condensed sets.
\end{itemize}

Calculations of complete cohomology have been performed beyond finite groups, but are notoriously difficult. This is already indicated by the fact that the complete cohomology group $\widehat{H}^0(G, \mathbb{Z})$ for a group $G$ vanishes if and only if $G$ has finite cohomological dimension~\cite[Lemma~4.2.4]{kro95}. In~\cite{dem08}, F.\! Dembegioti calculates the zeroeth complete cohomology group of a class of polycyclic groups, but also explains why one cannot determine a general formula calculating the zeroeth complete cohomology groups for all polycyclic groups. On the one hand, the cohomology of the free abelian group of countable rank $\bigoplus_{n \in \mathbb{N}} \mathbb{Z}$ is isomorphic to its complete cohomology by~\cite[p.~297]{mis94}. On the other hand, all cohomology groups of the free product $\bigast_{n \in \mathbb{N}} \mathbb{Z}^n$ are distinct from its complete cohomology groups~\cite[p.~432]{jo09}. In~\cite[Corollary~2.4 and Theorem~A]{dem05}, F.\! Dembegioti calculates the zeroeth complete cohomology group of $\bigast_{n \in \mathbb{N}} \mathbb{Z}^n$. Moving away from (discrete) groups, profinite groups are topological groups that are inverse limits of finite discrete groups, meaning that they can be assembled from finite groups in a particular manner. In his paper~\cite[p.~34]{sym07}, P.\! Symonds generalises F.\! T.\! Farrell's approach to profinite groups of finite virtual cohomological dimension. Using this, he computes in~\cite{sym04} the complete cohomology of the Morava stabiliser group $S_{p-1}$ with coefficients in the moduli space $E_{p-1}$ for odd primes $p$. To our knowledge, this is the only calculation of complete cohomology for a non-discrete topological group. \\

Because our theory enables one to define complete cohomology for most topological groups, the legitimate question arises to what calculations it can be employed. As a starting point, we suggest to compute complete cohomology groups of profinite groups because there is already an extensive theory on their cohomology as can be seen in~\cite{rib10} and \cite{wil98}. Inspired by the above mentioned work of F.\! Dembegioti, we propose the following two questions.

\begin{ques}
Let $\prod_{\mathbb{N}} \mathbb{Z}_p$ denote the free abelian pro-$p$ group over $\mathbb{N}$ as in~\cite[Example~3.3.8(c)]{rib10}. Is its cohomology $H_{\mathbb{Z}_p}^{\bullet}(\prod_{\mathbb{N}} \mathbb{Z}_p, -)$ isomorphic to its complete cohomology $\widehat{H}_{\mathbb{Z}_p}^{\bullet}(\prod_{\mathbb{N}} \mathbb{Z}_p, -)$ as a cohomological functor?
\end{ques}

\begin{ques}
Set $G_n = \mathbb{Z}_p^n$ for $n \in \mathbb{N}$ and $G_{\infty} = \lbrace 1 \rbrace$. Define $G = \bigsqcup_{n \in \mathbb{N} \cup \lbrace \infty \rbrace} G_n$ as the free pro-$p$ product over the one-point compactification of the integers $\mathbb{N} \cup \lbrace \infty \rbrace$ using the definitions of~\cite[p.~137--140]{rib17}. Can one compute the zeroeth complete cohomology $\widehat{H}_{\mathbb{Z}_p}^0(G, A)$ for a $\mathbb{Z}_p\llbracket G \rrbracket$-module $A$?
\end{ques}

It is unlikely that one can compute the above complete cohomology groups using previous generalisations of Tate cohomology. Namely, in the case of discrete groups, the complete cohomology groups of the free abelian group $\bigoplus_{n \in \mathbb{N}} \mathbb{Z}$ and of the free product $\bigast_{n \in \mathbb{N}} \mathbb{Z}^n$ cannot be calculated using F.\! T.\! Farrell's approach~\cite[Example~5.3]{cor97}, \cite[p.~119--120]{dem05}. In particular, P.\! Symonds generalisation of F.\! T.\! Farrell's approach from~\cite[p.~34]{sym07} is unlikely to apply to the above questions. \\

We detail how our work relates to generalised constructions of Tate cohomology in the realm of homotopical algebra. Recently, S. Guo and L. Liang have generalised in~\cite{guo23} the above constructions by D.\! J.\ Benson and J.\ F.\! Carlson~\cite{ben92}, by G.\! Mislin~\cite{mis94} and by P.\! Vogel~\cite{goi92} for relative Ext-functors over any abelian category $\mathcal{C}$ with a special precovering subcategory $\mathcal{W}$. Although they prove that their completed Ext-functors $\widehat{\mathrm{Ext}}_{\mathcal{WC}}^n(A, -)$ form a cohomological functor, we do not know whether they also form a Mislin completion of the Ext-functors $\mathrm{Ext}_{\mathcal{WC}}^{\bullet}(A, -)$ (Question~\ref{ques:comparewithguoliang}). Using a version of D.\! J.\ Benson and J.\ F.\! Carlson's approach, A.\! Beligiannis and I.\! Reiten define in~\cite[Section~IX.2]{bel07} a completed relative Ext-functor $\widehat{\mathrm{Ext}}_{(\mathcal{X}, \mathcal{Y})}^{\bullet}(-, -)$ for an abelian category $\mathcal{C}$ containing a particular pair of subcategories $(\mathcal{X}, \mathcal{Y})$. They prove that $\mathrm{Ext}_{\mathcal{X} \cap \mathcal{Y}}^{\bullet}(A, -) \rightarrow \widehat{\mathrm{Ext}}_{(\mathcal{X}, \mathcal{Y})}^{\bullet}(A, -)$ forms a Mislin completion for any $A \in \mathrm{obj}(\mathcal{C})$. J.\! Hu et al.\! use in~\cite{hu21} a version of P.\! Vogel's approach to establish completed Ext-functors $\widehat{\mathrm{Ext}}_{\mathcal{C}}^{\bullet}(-, -)$ for an extriangulated category $\mathcal{C}$ where extriangulated categories are a generalisation of exact categories and triangulated categories. Among their applications, they provide a criterion for the validity of the Wakamatsu Tilting Conjecture. A generalisation of Tate cohomology in the language of spectra can be found in~\cite{kle02}. For an overview of more generalisations of Tate cohomology and their applications, the reader is referred to M.\! Paganin's survey paper~\cite{pag16}. Our contribution stands out because it provides a uniform treatment of the entire theory and establishes fundamental properties such as dimension shifting, an Eckmann–Shapiro Lemma and cohomology products. \\

Let us point out two variants of the constructions considered in this paper that are covered in the literature, but that we shall not pursue. One could perform the constructions of D.\! J.\! Benson, J.\! F.\! Carlson, G.\! Mislin and P.\! Vogel for \textit{homological functors} instead of cohomological functors. O.\! Celikbas et al.\! generalise these constructions for homological functors of modules over rings, but they turn out to be not equivalent. Hence, one cannot expect to encounter the same uniform theory as we develop it for cohomological functors. As was first observed by B.\! E.\! A.\! Nucinkis in~\cite{nuc98}, one could perform the above constructions dually by assuming that there are enough injectives instead of enough projectives. More specifically, a ``Nucinkis completion'' of a cohomological functor $T^{\bullet}: \mathcal{C} \rightarrow \mathcal{D}$ is a morphism $T^{\bullet} \rightarrow \widetilde{T}^{\bullet}$ such that $\widetilde{T}^{\bullet}$ vanishes on injectives and is universal with respect to this property. S.\! Guo and L.\! Liang perform all their constructions of completed relative Ext-functors in a dual manner in~\cite{guo23}. Together with X.\! Yang they provide in~\cite[Main Theorem]{guol23} a criterion for when $\widehat{\mathrm{Ext}}_{\mathcal{C}}^{\bullet}(-, -)$ is isomorphic to $\widetilde{\mathrm{Ext}}_{\mathcal{C}}^{\bullet}(-, -)$. Such criteria are also established by A.\! Beligiannis and I.\! Reiten in~\cite[Theorem~IX.4.4]{bel07} as well as by S.\! Hu et al.\! in~\cite[Theorem~4.4]{huz21}. This ``Nucinkis completion'' would not affect the cohomology of discrete and profinite groups because they already vanish on injectives~\cite[p.~61]{bro82},~\cite[p.~9]{ser94}. As this completion does not generalise Tate cohomology, we shall not consider it further. \\

We showcase in Section~\ref{sec:constructions} the relevant constructions of Mislin completions that we are generalising in this paper. The bulk of the paper consists of Section~\ref{sec:satellites}, Section~\ref{sec:resols}, Section~\ref{sec:naiveconstruction} and Section~\ref{sec:hypercohom} in which we prove that each of these approaches gives rise to a Mislin completion. Accordingly, we develop explicit formulae for the induced morphisms and connecting homomorphisms for each approach in the corresponding section. More specifically, we prove in Section~\ref{sec:satellites} that one approach by G.\! Mislin forms a Mislin completion. In each subsequent section we demonstrate this for one other approach each by constructing isomorphisms of cohomological functors to an approach that already forms Mislin completions. In this manner, we cover another approach also due to G.\! Mislin in Section~\ref{sec:resols}, the approach due to D.\! J.\! Benson and J.\! F.\! Carlson in Section~\ref{sec:naiveconstruction} and the approach due to P.\! Vogel in Section~\ref{sec:hypercohom}. As a noteworthy feature, we apply our results to the powerful novel theory of condensed mathematics in Section~\ref{sec:condmaths}.

\subsection{Notation and terminology}

We adopt the convention that the natural numbers $\mathbb{N}$ include $1$, but do not include $0$. We write $\mathbb{N}_0$ for $\mathbb{N} \cup \lbrace 0 \rbrace$. Moreover, we adopt B.\! Poonen's convention from~\cite{poo18} that a ring is an abelian group together with a totally associative product, meaning a binary associative relation admitting an identity element. In particular, ring homomorphisms are understood to map identity elements to identity elements. We use the symbol $\varinjlim$ only to denote direct limits. If $\mathcal{D}$ is a category, then $\varinjlim_{\mathcal{D}}$ denotes a direct limit in this category and if $I$ is a directed set, then $\varinjlim_{i \in I}$ denotes a direct limit indexed over $I$. We write $\varinjlim_{k \in \mathbb{N}} (M_k, \mu_k)$ if $\mu_k: M_k \rightarrow M_{k+1}$ are the morphisms giving rise to the direct limit. Lastly, we use the same numbering to label diagrams and equations.

\section{Outline of constructions}\label{sec:constructions}

Let us axiomatically define cohomological functors as they are one of the key notions in this paper. For two abelian categories $\mathcal{C}$, $\mathcal{D}$ we call a family of additive functors $(T^n: \mathcal{C} \rightarrow \mathcal{D})_{n \in \mathbb{Z}}$ a cohomological functor if it satisfies the following two axioms~\cite[p.~201--202]{kro95}.

\begin{axm}\label{axm:delta}
For every $n \in \mathbb{Z}$ and short exact sequence $0 \rightarrow A \rightarrow B \rightarrow C \rightarrow 0$ in $\mathcal{C}$, there is natural connecting homomorphism $\delta^n: T^n(C) \rightarrow T^{n+1}(A)$.
\end{axm}

Being natural means in this context that for every commuting diagram in $\mathcal{C}$
\begin{center}
\begin{tikzcd}
0 \arrow[r] & A \arrow[r] \arrow[d, "f"] & B \arrow[r] \arrow[d] & C \arrow[r] \arrow[d, "g"] & 0 \\
0 \arrow[r] & A' \arrow[r] & B' \arrow[r] & C' \arrow[r] & 0
\end{tikzcd}
\end{center}
with exact rows there is a commuting diagram
\begin{center}
\begin{tikzcd}
T^n(A) \arrow[r, "\delta^n"] \arrow[d, "T^n(g)"] & T^{n+1}(A) \arrow[d, "T^{n+1}(f)"] \\
T^n(A') \arrow[r, "\delta^n"] & T^{n+1}(C')
\end{tikzcd}
\end{center}
in $\mathcal{D}$.

\begin{axm}\label{axm:les}
For every short exact sequence $0 \rightarrow A \xrightarrow{\iota} B \xrightarrow{\pi} C \rightarrow 0$ in $\mathcal{C}$ there is a long exact sequence
\[ \dots {} \xrightarrow{T^{n-1}(\pi)} T^{n-1}(C) \xrightarrow{\delta^{n-1}} T^n(A) \xrightarrow{T^n(\iota)} T^n(B) \xrightarrow{T^n(\pi)} T^n(C) \xrightarrow{\delta^n} T^{n+1}(A) \xrightarrow{T^{n+1}(\iota)} {} \dots \]
\end{axm}

Let us axiomatically define morphisms of cohomological functors as in~\cite[p.~202]{kro95}.

\begin{axm}\label{axm:morphisms}
Let $(T^{\bullet}, \delta^{\bullet})$ and $(U^{\bullet},\varepsilon^{\bullet})$ be cohomological functors from $\mathcal{C}$ to $\mathcal{D}$. Then a family of natural transformations $(\nu^n: T^n \rightarrow U^n)_{n \in \mathbb{Z}}$ is a morphism of cohomological functors if for every $n \in \mathbb{Z}$ and any short exact sequence $0 \rightarrow A \rightarrow B \rightarrow C \rightarrow 0$ in $\mathcal{C}$, the square
\begin{center}
\begin{tikzcd}
  T^n(C) \arrow[r, "\delta^n"] \arrow[d, "\nu^n"]
    &T^{n+1}(A) \arrow[d, "\nu^{n+1}"] \\
  U^n(C) \arrow[r, "\varepsilon^n"]
    &U^{n+1}(A)
\end{tikzcd}
\end{center}
commutes.
\end{axm}

We generalise G.\! Mislin's Definition~2.1 from~\cite{mis94} to the greatest extent.

\begin{defn}[Mislin completion]\label{defn:mislincompletion}
Let $(T^{\bullet}, \delta^{\bullet})$ be a cohomological functor from $\mathcal{C}$ to $\mathcal{D}$. Then its Mislin completion is a cohomological functor $(\widehat{T}^{\bullet}, \widehat{\delta}^{\bullet})$ from $\mathcal{C}$ to $\mathcal{D}$ together with a morphism $\nu^{\bullet}: T^{\bullet} \rightarrow \widehat{T}^{\bullet}$ satisfying the following universal property: \newline
\textnormal{1.} $(\widehat{T}^{\bullet}, \widehat{\delta}^{\bullet})$ vanishes on projectives, meaning that $\widehat{T}^n(P) = 0$ for every projective object $P \in \mathrm{obj}(\mathcal{C})$ and every $n \in \mathbb{Z}$. \newline
\textnormal{2.} If $(U^{\bullet}, \varepsilon^{\bullet})$ is any cohomological functor vanishing on projectives, then each morphism $T^{\bullet} \rightarrow U^{\bullet}$ factors uniquely as $T^{\bullet} \xrightarrow{\nu^{\bullet}} \widehat{T}^{\bullet} \rightarrow U^{\bullet}$.
\end{defn}

By virtue of their universal property, Mislin completions are unique up to isomorphism in the following sense. If $(U^{\bullet}, \varepsilon^{\bullet})$ is another Mislin completion of $(T^{\bullet}, \delta^{\bullet})$, then there is an isomorphism $\mu^{\bullet}: \widehat{T}^{\bullet} \rightarrow U^{\bullet}$, meaning that $\mu^n(M): \widehat{T}^n(M) \rightarrow U^n(M)$ is an isomorphism for any $n \in \mathbb{Z}$ and $M \in \mathrm{obj}(\mathcal{C})$~\cite[p.~202]{kro95}. This allows us to state G.\! Mislin's definition from~\cite[p.~297]{mis94} in the greatest generality.

\begin{defn}[Axiomatic, Mislin]\label{defn:axiomatic}
${} \quad {}$
\begin{itemize}
\item For any $A \in \mathrm{obj}(\mathcal{C})$ extend the (enriched or unenriched) Ext-functors to a cohomological functor by setting $\mathrm{Ext}_{\mathcal{C}}^n(A, -) = 0$ for $n < 0$. Define completed Ext-functors as the Mislin completion $(\widehat{\mathrm{Ext}}_{\mathcal{C}}^{\bullet}(A, -), \widehat{\delta}^{\bullet})$.
\item Analogously, if $G$ is a group object in $\mathcal{C}$ and $R$ a ring object, then extend group cohomology to a cohomological functor $(H_R^{\bullet}(G, -), \delta^{\bullet})$ by imposing $H_R^n(G, -) = 0$ for $n < 0$. Define complete cohomology as the Mislin completion $(\widehat{H}_R^{\bullet}(G, -), \widehat{\delta}^{\bullet})$.
\end{itemize}
\end{defn}

To ensure that Mislin completions exist, the domain category $\mathcal{C}$ of the cohomological functor needs to have enough projectives on the one hand. On the other hand, in the codomain category $\mathcal{D}$ all countable direct limits need to exist and be exact, which we define as follows.

\begin{defn}\label{defn:directlimit}
A partially ordered set $(I, \leq)$ is a directed set if for every $i, j \in I$ there is $k \in I$ such that $i,j \leq k$~\cite[p.~1]{rib10}. According to~\cite[p.~14]{rib10}, a diagram $\lbrace D_i \rbrace_{i \in I}$ in $\mathcal{D}$ indexed over a directed set is called a direct system in $\mathcal{D}$. More formally, $I$ can be turned into a category whose objects are its elements and there is a unique morphism $i \rightarrow j$ whenever $i \leq j \in I$. Then a direct system is a covariant functor $I \rightarrow \mathcal{D}, i \mapsto D_i$. A direct limit $\varinjlim_{i \in I} D_i$ in $\mathcal{D}$ is a colimit of a direct system $\lbrace D_i \rbrace_{i \in I}$. Direct limits in $\mathcal{D}$ are called exact if for every direct system of short exact sequence $\lbrace 0 \rightarrow A_i \rightarrow B_i \rightarrow C_i \rightarrow 0 \rbrace_{i \in I}$ also
\[ 0 \rightarrow \varinjlim_{i \in I} A_i \rightarrow \varinjlim_{i \in I} B_i \rightarrow \varinjlim_{i \in I} C_i \rightarrow 0 \]
is a short exact sequence~\cite[\href{https://stacks.math.columbia.edu/tag/079A}{Tag 079A}]{stacks-project}.
\end{defn}

In order to clarify where the above assumptions are needed, we present what we term the satellite functor construction, which is due to G.\! Mislin. First, we introduce left satellite functors. Using the above assumption that $\mathcal{C}$ has enough projective objects, there is for any $M \in \mathrm{obj}(\mathcal{C})$ a short exact sequence $0 \rightarrow K \rightarrow P \rightarrow M \rightarrow 0$ in $\mathcal{C}$ with $P$ projective. For a cohomological functor $(T^{\bullet}, \delta^{\bullet})$, we define the zeroeth left satellite functor of $T^n$ as $S^0T^n := T^n$, the first left satellite functor as
\[ S^{-1}T^n(M) := \mathrm{Ker}\big(T^n(K) \rightarrow T^n(P)\big) \]
and the $k^{\text{th}}$ left satellite functor as $S^{-k}T^n := S^{-1}(S^{-k+1}T^n)$ for $k \geq 2$~\cite[p.~36]{car56}. It is shown in~\cite[Section~III.1]{car56} that left satellite functors do not depend on the choice of short exact sequence. Since they have been defined as kernels, it follows from Axiom~\ref{axm:les} that $\delta^n: T^n \rightarrow T^{n+1}$ induces a morphism $\underline{\delta}^n: T^n(M) \rightarrow S^{-1}T^{n+1}(M)$ and therefore
\[ S^{-k}\underline{\delta}^{n+k}: S^{-k}T^{n+k}(M) \rightarrow S^{-k-1}T^{n+k+1}(M) \]
for any $k \in \mathbb{N}$~\cite[p.~207--208]{kro95}. We extend G.\! Mislin's construction from~\cite[p.~293]{mis94} by explicitly using our assumption that all countable  direct limits exist in the codomain category $\mathcal{D}$ of $T^{\bullet}$.

\begin{defn}[Satellite functor construction, Mislin]
The Mislin completion of a cohomological functor $(T^{\bullet}, \delta^{\bullet})$ can be defined as
\[ \widehat{T}^n(M) := \varinjlim_{k \in \mathbb{N}_0} (S^{-k}T^{n+k}(M), S^{-k}\underline{\delta}^{n+k}) \]
for any $M \in \mathrm{obj}(\mathcal{C})$ and $n \in \mathbb{Z}$. Accordingly,
\[ \widehat{H}_R^n(G, M) := \varinjlim_{k \in \mathbb{N}_0} (S^{-k}H_R^{n+k}(G, M), S^{-k}\underline{\delta}^{n+k}) \]
is a definition of complete cohomology.
\end{defn}

In order that the above forms a cohomological functor, we require the very assumption that all countable direct limits in $\mathcal{D}$ are exact. \\

Let us go over to what we term the resolution construction that occurs in~\cite[Lemma~B.3]{cel17} and can be retrieved from page~299 in G.\! Mislin's paper~\cite{mis94}. If $(M_n)_{n \in \mathbb{N}_0}$ is a projective resolution of $M \in \mathrm{obj}(\mathcal{C})$, let us define $\widetilde{M}_0 := M$ and $\widetilde{M}_k := \mathrm{Ker}(M_{k-1} \rightarrow \widetilde{M}_{k-1})$ for $k \in \mathbb{N}$. This is called the $k^{\text{th}}$ syzygy of $M_{\bullet}$ in the Gorenstein context~\cite[p.~89]{pag16}. The choice of our notation is meant to reflect that our syzygies do not necessarily arise from a specific choice of projective resolution as in \cite{kro95} and \cite{mis94}. Since for every $k \in \mathbb{N}_0$ we have the short exact sequence $0 \rightarrow \widetilde{M}_{k+1} \rightarrow M_k \rightarrow \widetilde{M}_k \rightarrow 0$, there is a connecting homomorphism $\delta^{n+k}: T^{n+k}(\widetilde{M}_k) \rightarrow T^{n+k+1}(\widetilde{M}_{k+1})$ for every $n \in \mathbb{Z}$. Then the following definition makes it more apparent why Mislin completions vanish on projective objects.

\begin{defn}[Resolution construction, Mislin]\label{defn:resolsinoutline}
The Mislin completion of a cohomological functor $(T^{\bullet}, \delta^{\bullet})$ can be defined as 
\[ \widehat{T}^n(M) := \varinjlim_{k \in \mathbb{N}_0} (T^{n+k}(\widetilde{M}_k), \delta^{n+k}) \]
for any $n \in \mathbb{Z}$ and $M \in \mathrm{obj}(\mathcal{C})$. Accordingly,
\[ \widehat{H}_R^n(G, M) := \varinjlim_{k \in \mathbb{N}_0} (H_R^{n+k}(G, \widetilde{M}_k), \delta^{n+k}) \]
is a definition of complete cohomology.
\end{defn}

The next two constructions only give rise to completed unenriched Ext-functors. Let $A_{\bullet}$, $B_{\bullet}$ are projective resolutions of $A, B \in \mathrm{obj}(\mathcal{C})$ and let $\widetilde{f}_{n+k}: \widetilde{A}_{n+k} \rightarrow \widetilde{B}_k$ be a morphism for $n \in \mathbb{Z}$ and $k \in \mathbb{N}_0$ such that $n+k \geq 0$. Then we can write the commuting diagram
\begin{center}
\begin{tikzcd}
    0 \arrow[r] & \widetilde{A}_{n+k+1} \arrow[r] \arrow[d, "\widetilde{f}_{k+1}"] & A_{n+k} \arrow[r] \arrow[d, "f_{k+1}"] & \widetilde{A}_{n+k} \arrow[r] \arrow[d, "\widetilde{f}_k"] & 0 \\
    0 \arrow[r] & \widetilde{B}_{k+1} \arrow[r] & B_{k+1} \arrow[r] & \widetilde{B}_k \arrow[r] & 0
\end{tikzcd}
\end{center}
whose terms arise as follows. Since the bottom row is exact and the term $A_{n+k}$ projective, there is a lift $f_k$ of $\widetilde{f}_k$ making the right-hand square commute. Because $\widetilde{B}_{k+1} \rightarrow B_{k+1}$ is a kernel, there is a morphism $\widetilde{f}_{k+1}$ making the left-hand side commute. If $\mathrm{Hom}_{\mathcal{C}}(-, -)$ denotes the (unenriched) Hom-functor in $\mathcal{C}$, then $\mathrm{Hom}_{\mathcal{C}}(\widetilde{A}_{n+k}, \widetilde{B}_k)$ is an abelian group by virtue of $\mathcal{C}$ being an abelian category. We define $\mathcal{P}_{\mathcal{C}}(\widetilde{A}_{n+k}, \widetilde{B}_k)$ to be the subgroup of $\mathrm{Hom}_{\mathcal{C}}(\widetilde{A}_{n+k}, \widetilde{B}_k)$ consisting of all morphisms factoring through a projective object and write the quotient as $[\widetilde{A}_{n+k}, \widetilde{B}_k]_{\mathcal{C}} := \mathrm{Hom}_{\mathcal{C}}(\widetilde{A}_{n+k}, \widetilde{B}_k)/ \mathcal{P}_{\mathcal{C}}(\widetilde{A}_{n+k}, \widetilde{B}_k)$~\cite[p.~203]{kro95}. As in the case of modules over a ring covered by~\cite[p.~204]{kro95}, one can prove that
\begin{align*}
t_{\widetilde{A}_{n+k}, \widetilde{B}_k}: [\widetilde{A}_{n+k}, \widetilde{B}_k]_{\mathcal{C}} &\rightarrow [\widetilde{A}_{n+k+1}, \widetilde{B}_{k+1}]_{\mathcal{C}}, \\
\widetilde{f}_k + \mathcal{P}_{\mathcal{C}}(\widetilde{A}_{n+k}, \widetilde{B}_k) &\mapsto \widetilde{f}_{k+1} + \mathcal{P}_{\mathcal{C}}(\widetilde{A}_{n+k+1}, \widetilde{B}_{k+1})
\end{align*}
is a well defined homomorphism. Using this, we generalise the following construction from D.\! J.\! Benson and J.\! F.\! Carlson's paper~\cite[p.~109]{ben92}.

\begin{defn}[Naïve construction, Benson \& Carlson]
For any $n \in \mathbb{Z}$, we can define the $n^{\text{th}}$ completed unenriched Ext-functor as
\[ \widehat{\mathrm{Ext}}_{\mathcal{C}}^n(A, B) := \varinjlim_{k \in \mathbb{N}_0, n+k \geq 0} ([\widetilde{A}_{n+k}, \widetilde{B}_k]_{\mathcal{C}}, t_{\widetilde{A}_{n+k}, \widetilde{B}_k}) \, . \]
In particular, if $R_{\bullet}$ is a projective $R[G]$-resolution of $R \in \mathrm{obj}(Mod_R(G))$, we can define complete unenriched cohomology as
\[ \widehat{H}_R^n(G, B) := \varinjlim_{k \in \mathbb{N}_0, n+k \geq 0} ([\widetilde{R}_{n+k}, \widetilde{B}_k]_{\mathcal{C}}, t_{\widetilde{R}_{n+k}, \widetilde{B}_k}) \, . \]
\end{defn}

Lastly, we present what we call the hypercohomology construction of complete cohomology. We define the chain complex $(A_n')_{n \in \mathbb{Z}}$ by $A_n' = A_n$ for $n \geq 0$ and $A_n' = 0$ for $n < 0$ and similarly $(B_n')_{n \in \mathbb{Z}}$~\cite[p.~209]{kro95}. Define the hypercohomology complex $(\mathrm{Hyp}_{\mathcal{C}}(A_{\bullet}', B_{\bullet}')_n, d^n)_{n \in \mathbb{Z}}$ by having $n$-cochains 
\[ \mathrm{Hyp}_{\mathcal{C}}(A_{\bullet}', B_{\bullet}')_n = \prod_{k \in \mathbb{Z}} \mathrm{Hom}_{\mathcal{C}}(A_{k+n}', B_k') \, . \]
To ease notation in the following, we view abelian groups as $\mathbb{Z}$-modules. If we denote by $a_n: A_n' \rightarrow A_{n-1}'$ and $b_n: B_n' \rightarrow B_{n-1}'$ the differentials induced from the respective projective resolution, we define for $n \in \mathbb{Z}$ the differential
\begin{align}
d^n: \mathrm{Hyp}_{\mathcal{C}}(A_{\bullet}', B_{\bullet}')_n &\rightarrow \mathrm{Hyp}_{\mathcal{C}}(A_{\bullet}', B_{\bullet}')_{n+1} \label{eq:hypercohombd} \\ (\varphi_{n+k})_{k \in \mathbb{Z}} &\mapsto (b_{k+1} \circ \varphi_{n+k+1} - (-1)^n \varphi_{n+k} \circ a_{n+k+1})_{k \in \mathbb{Z}} \nonumber
\end{align}
Let us define the bounded complex $\mathrm{Bdd}_{\mathcal{C}}(A_{\bullet}', B_{\bullet}')_{n \in \mathbb{Z}}$ as the subcomplex of $\mathrm{Hyp}_{\mathcal{C}}(A_{\bullet}', B_{\bullet}')_{n \in \mathbb{Z}}$ given by
\[ \mathrm{Bdd}_{\mathcal{C}}(A_{\bullet}', B_{\bullet}')_n = \bigoplus_{k \in \mathbb{Z}} \mathrm{Hom}_{\mathcal{C}}(A_{k+n}', B_k') \, . \]
Define the Vogel complex as the quotient complex~\cite[p.~209]{kro95}
\[ \mathrm{Vog}_{\mathcal{C}}(A_{\bullet}', B_{\bullet}')_{n \in \mathbb{Z}} := \Big(\mathrm{Hyp}_{\mathcal{C}}(A_{\bullet}', B_{\bullet}')_n / \mathrm{Bdd}_{\mathcal{C}}(A_{\bullet}', B_{\bullet}')_n \Big)_{n \in \mathbb{Z}} \, . \]
By this, we generalise Definition~1.2 from F.\! Goichot's paper~\cite{goi92} where he attributes it to P.\! Vogel on page~39.

\begin{defn}[Hypercohomology construction, Vogel]\label{defn:vogel}
For $n \in \mathbb{Z}$ we can define the $n^{\text{th}}$ completed unenriched Ext-functor as
\[ \widehat{\mathrm{Ext}}_{\mathcal{C}}^n(A, B) := H^n((\mathrm{Vog}_{\mathcal{C}}(A_{\bullet}', B_{\bullet}')_k, d^k)_{k \in \mathbb{Z}}) \, . \]
We can thus define complete unenriched cohomology as
\[ \widehat{H}_R^n(G, M) := H^n((\mathrm{Vog}_R(R_{\bullet}', B_{\bullet}')_k, d^k)_{k \in \mathbb{Z}})\, . \]
\end{defn}

Let us remark why it is unlikely that the previous two constructions could yield Mislin completions of more general enriched Ext-functors. Enriched Hom-functors are particular bifunctors of the form
\[ \underline{\mathrm{Hom}}_{\mathcal{C}}(-, -): \mathcal{C}^{\mathrm{op}} \times \mathcal{C} \rightarrow \mathcal{D} \]
where $\mathcal{D}$ is an abelian category that does not need to be $\mathbf{Ab}$. Accordingly, one can define enriched Ext-functors $\underline{\mathrm{Ext}}_{\mathcal{C}}^{\bullet}(-, -): \mathcal{C}^{\mathrm{op}} \times \mathcal{C} \rightarrow \mathcal{D}$, which are relevant in the context of condensed mathematics. For the naïve construction, one aims to find a morphism $\mathrm{Hom}_{\mathcal{C}}(\widetilde{A}_{n+k}, \widetilde{B}_k) \rightarrow \mathrm{Hom}_{\mathcal{C}}(\widetilde{A}_{n+k+1}, \widetilde{B}_{k+1})$. If one tries to lift along the morphisms induced by $A_{n+k} \rightarrow \widetilde{A}_{n+k}$ and $B_{n+k} \rightarrow \widetilde{B}_{n+k}$, one requires that $\mathrm{Hom}_{\mathcal{C}}(A_{n+k}, -)$ preserves epimorphisms. For Hom-functors, this role is exactly played by projective objects~\cite[Lemma~2.2.3]{wei94}. For the hypercohomology construction we require that the coproduct $\mathrm{Bdd}_{\mathcal{C}}(A_{\bullet}', B_{\bullet}')$ maps into the product $\mathrm{Hyp}_{\mathcal{C}}(A_{\bullet}', B_{\bullet}')$. As we shall see in Subsection~\ref{subsec:hyperindconn}, the cohomology groups of the Vogel complex $\mathrm{Vog}_{\mathcal{C}}(A_{\bullet}', B_{\bullet}')$ correspond exactly to almost chain maps modulo almost chain homotopy. However, this implies that the objects $\mathrm{Hom}_{\mathcal{C}}(A_p, B_q) \in \mathrm{obj}(\mathcal{D})$ describe morphisms of the form $A_p \rightarrow B_q$ in $\mathcal{C}$.

\begin{conv}
As we have provided explicit constructions for the approaches by G.\! Mislin, by D.\! J.\! Benson and J.\! F.\! Carlson and by P.\! Vogel, we establish the following convention for the remainder of the paper. We shall refer to each construction by its name and cede to refer to the author who has introduced it. For instance, we shall only refer to the satellite functor construction from now on and do not mention anymore that its general approach is due to G.\! Mislin. 
\end{conv}

\section{The satellite functor construction and Mislin completions}\label{sec:satellites}

By generalising the satellite functor construction in this section, we establish Mislin completions in full generality and present the very first explicit formulae for the resulting connecting homomorphisms and induced morphisms. More specifically, G.\! Mislin constructs in the proof of \cite[Theorem~2.2]{mis94} Mislin completions through satellite functors only for modules over a ring. We generalise his work such that one can define Mislin completions whenever the domain category $\mathcal{C}$ is an abelian category with enough projective objects and in the codomain category $\mathcal{D}$ all countable direct limits exist and are exact. The explicit formulae for the connecting homomorphisms and induced morphisms of the resulting Mislin completion $(\widehat{T}^{\bullet}, \widehat{\delta}^{\bullet})$ are fundamental for this paper because they form the base for analogous explicit formulae of all other constructions of Mislin completions. \\

For the reader's convenience, we summarise relevant results on left satellite functors from the third chapter of H.\! Cartan and S.\! Eilenberg's book~\cite{car56}. Because A.\! Grothendieck has generalised their satellite functors in his Tôhoku paper~\cite[p.~140--143]{gro57}, their arguments pertain to our categories $\mathcal{C}$ and $\mathcal{D}$. The first relevant result is an explicit description of induced morphisms for left satellite functors that we present in the following. Consider a morphism $f: C' \rightarrow C$ in $\mathcal{C}$. Using the assumption that the category $\mathcal{C}$ has enough projectives, we choose short exact sequences
\[ 0 \rightarrow M' \rightarrow P' \rightarrow C' \rightarrow 0 \quad \text{and} \quad 0 \rightarrow M \rightarrow P \rightarrow C \rightarrow 0 \]
with $P'$ and $P$ projective. Then there exists a lift $\overline{f}^{\ast}: P' \rightarrow P$ rendering the diagram
\begin{equation}\label{diag:satellitemorph}
\begin{tikzcd}
  0 \arrow[r]
    &M' \arrow[r] \arrow[d, "\overline{f}^{\ast}"]
    &P' \arrow[r] \arrow[d, "\overline{f}"]
    &C' \arrow[r] \arrow[d, "f"]
    &0 \\
 0 \arrow[r]
    &M \arrow[r]
    &P \arrow[r]
    &C \arrow[r]
    &0
\end{tikzcd}
\end{equation}
commutative. The morphism $T^n(\overline{f}^{\ast}): T^n(M') \rightarrow T^n(M)$ induces the induced morphism $S^{-1}T^n(f): S^{-1}T^n(C') \rightarrow S^{-1}T^n(C)$. Pictorially,
\begin{center}
\begin{tikzcd}
    S^{-1}T^n(C') \arrow[r, "\varepsilon^{-1}"] \arrow[d, dashed, "S^{-1}T^n(f)"] & T^n(M') \arrow[d, "T^n(\overline{f}^{\ast})"] \\
    S^{-1}T^n(C) \arrow[r, "\varepsilon^{-1}"] & T^n(M)
\end{tikzcd}
\end{center}
where the morphisms $\varepsilon^{-1}$ are the canonical monomorphisms from the kernels. The induced morphisms $S^{-k}T^n(f): S^{-k}T^n(C') \rightarrow S^{-k}T^n(C)$ for higher satellite functors are defined analogously. \\

There are connecting homomorphisms between left satellite functors that are indispensable for our results. For any exact sequence $0 \rightarrow A \rightarrow B \rightarrow C \rightarrow 0$ in $\mathcal{C}$ and any $k \in \mathbb{N}_0$ these connecting homomorphisms take the form $\varepsilon^{-k-1}: S^{-k-1}T^n(C) \rightarrow S^{-k}T^n(A)$. If $B$ is projective, then $\varepsilon^{-k-1}$ is the monomorphism from the kernel $S^{-k-1}T^n(C)$. Otherwise let $0 \rightarrow M \rightarrow P \rightarrow C \rightarrow 0$ be a short exact sequence with $P$ projective. Let $h: P \rightarrow B$ be a morphism lifting $\mathrm{id}_C: C \rightarrow C$ as in the commutative diagram
\begin{equation}\label{diag:delta}
\begin{tikzcd}
  0 \arrow[r]
    &M \arrow[r] \arrow[d, "h^{\ast}"]
    &P \arrow[r] \arrow[d, "h"]
    &C \arrow[r] \arrow[d, "\mathrm{id}_{C}"]
    &0 \\
 0 \arrow[r]
    &A \arrow[r]
    &B \arrow[r]
    &C \arrow[r]
    &0
\end{tikzcd}
\end{equation}
Then 
\[ \varepsilon^{-k-1}: S^{-k-1}T^n(C) \rightarrow S^{-k}T^n(M) \xrightarrow{S^{-k}T^n(h^{\ast})} S^{-k}T^n(A) \]
is a connecting homomorphism where the first morphism is the monomorphism of the kernel $S^{-n-1}T^n(C)$. The term connecting homomorphism is justified because the morphisms $\varepsilon^{-k}$ form natural transformations giving rise to a long exact sequence
\[ \dots {} \rightarrow S^{-k-1}T^n(C) \xrightarrow{\varepsilon^{-k-1}} S^{-k}T^n(A) \rightarrow S^{-k}T^n(B) \rightarrow S^{-k}T^n(C) \xrightarrow{\varepsilon^{-k}} S^{-k+1}T^n(A) \rightarrow {} \dots \]
The starting point of the satellite functor construction of Mislin completions is the following result that can be directly generalised from \cite[Proposition~III.5.2]{car56}.

\begin{lem}\label{lem:uniqueness}
Let $T^{\bullet}$ and $U^{\bullet}$ be cohomological functors and $\Phi^0: T^0 \rightarrow U^0$ be a natural transformation. If for every short exact sequence $0 \rightarrow K \rightarrow P \rightarrow M \rightarrow 0$ with $P$ projective and every $n \in \mathbb{N}$ the sequence $0 \rightarrow U^{-n-1}(M) \rightarrow U^{-n}(K)$ involving the connecting homomorphism is exact, then $\Phi^0$ extends uniquely to a (partial) morphism of cohomological functors $(\Phi^n: T^n \rightarrow U^n)_{n \leq 0}$ only defined for $n \leq 0$. In particular, if also $T^{\bullet}$ has the property that the sequence $0 \rightarrow T^{-n-1}(M) \rightarrow T^{-n}(K)$ is exact for $n \in \mathbb{N}$ and $\Phi^0$ is an equivalence, then the extension $\Phi^{\bullet}$ is an isomorphism.
\end{lem}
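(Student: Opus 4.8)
The plan is to build the extension one degree at a time by \emph{dimension shifting}, defining $\Phi^{n}$ from $\Phi^{n+1}$ by descending induction on $n \le 0$. So fix $n \le -1$, assume $\Phi^{m}$ has been constructed as a natural transformation compatible with connecting homomorphisms for every $n < m \le 0$, and let $M \in \mathrm{obj}(\mathcal{C})$. Since $\mathcal{C}$ has enough projectives, choose a short exact sequence $0 \to K \xrightarrow{\iota} P \xrightarrow{\pi} M \to 0$ with $P$ projective; by Axiom~\ref{axm:delta} it yields the square
\begin{center}
\begin{tikzcd}
  T^{n}(M) \arrow[r, "\delta_T^{n}"] \arrow[d, dashed, "\Phi^{n}(M)"] & T^{n+1}(K) \arrow[d, "\Phi^{n+1}(K)"] \\
  U^{n}(M) \arrow[r, "\delta_U^{n}"] & U^{n+1}(K)
\end{tikzcd}
\end{center}
By hypothesis $\delta_U^{n}$ is a monomorphism, so at most one $\Phi^{n}(M)$ can make this commute; and one exists exactly when $\Phi^{n+1}(K)\circ\delta_T^{n}$ factors through the subobject $\mathrm{Im}\,\delta_U^{n} = \mathrm{Ker}\big(U^{n+1}(\iota)\big)$ of $U^{n+1}(K)$, which follows from the naturality of $\Phi^{n+1}$ and the long exact sequence of Axiom~\ref{axm:les} for $T^{\bullet}$:
\[ U^{n+1}(\iota)\circ\Phi^{n+1}(K)\circ\delta_T^{n} = \Phi^{n+1}(P)\circ T^{n+1}(\iota)\circ\delta_T^{n} = 0 . \]
This defines $\Phi^{n}(M)$; the base step $n = -1$ follows the same pattern, using that $\Phi^{0}$ is a natural transformation and that the connecting homomorphism $U^{-1}(M) \to U^{0}(K)$ is monic.

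Next I would show $\Phi^{n}(M)$ is independent of the chosen resolution sequence and that $\Phi^{n}$ is natural. For independence, given two sequences $0 \to K_i \to P_i \to M \to 0$ one forms a common refinement (via $P_1 \oplus P_2$, or by lifting $\mathrm{id}_M$ back and forth), gets morphisms of short exact sequences, and concludes the two candidate maps agree by combining the naturality of the connecting homomorphisms (Axiom~\ref{axm:delta}) with the monomorphism $\delta_U^{n}$; this is exactly the Cartan--Eilenberg well-definedness argument for satellites in Section~III.1 of \cite{car56}, valid for our abelian categories by \cite{gro57}. For naturality, given $f\colon M \to M'$ choose resolution sequences over $M$ and $M'$ and, using projectivity, a morphism of short exact sequences covering $f$ with connecting map $\bar f\colon K \to K'$; chasing the two $\delta$-squares and the naturality square of $\Phi^{n+1}$ shows the required square commutes after postcomposition with $\delta_U^{n}$, hence commutes.

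It then remains to verify Axiom~\ref{axm:morphisms}, i.e.\ compatibility of $(\Phi^{n})_{n \le 0}$ with the connecting homomorphism of an \emph{arbitrary} short exact sequence $0 \to A \to B \to C \to 0$, not merely the resolution sequences used to define the $\Phi^{n}$. One lifts $\mathrm{id}_C$ to a morphism from a resolution sequence $0 \to K \to P \to C \to 0$ to the given one, and combines the $\delta$-naturality squares of $T^{\bullet}$ and $U^{\bullet}$ with the square defining $\Phi^{n}(C)$; the desired identity then reduces to the already-known case after once more cancelling the monomorphism $\delta_U^{n}$. For the concluding ``in particular'': if $T^{\bullet}$ also enjoys the monomorphism property and $\Phi^{0}$ is a natural isomorphism, apply the first part to $(\Phi^{0})^{-1}\colon U^{0} \to T^{0}$ to obtain a partial morphism $\Psi^{\bullet}\colon U^{\bullet} \to T^{\bullet}$ extending it; then $\Psi^{\bullet}\circ\Phi^{\bullet}$ and $\Phi^{\bullet}\circ\Psi^{\bullet}$ are partial morphisms of cohomological functors extending $\mathrm{id}_{T^{0}}$ and $\mathrm{id}_{U^{0}}$, so by the uniqueness already established they equal the identities, whence every $\Phi^{n}(M)$ is an isomorphism.

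I expect the main obstacle to be the coherence bookkeeping of the middle two steps: showing that the single commuting square used to pin down $\Phi^{n}(M)$ is enough to force independence of all choices, naturality in $M$, and compatibility with \emph{every} connecting homomorphism. In each of these the decisive mechanism is the hypothesis that $\delta_U^{n}$ is a monomorphism in the relevant negative degrees, which converts ``the diagram commutes after composing with $\delta_U^{n}$'' into ``the diagram commutes''; once that is in place, the remaining verifications are diagram chases through the long exact sequences of Axiom~\ref{axm:les} and the naturality of Axiom~\ref{axm:delta}, and the uniqueness of the whole extension drops out of the same monomorphism property by the same induction.
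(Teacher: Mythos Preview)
The paper does not prove this lemma itself; it is quoted as Proposition~5.2 from Cartan--Eilenberg~\cite[p.~46]{car56}, and the paper later refers to ``the proof of Lemma~\ref{lem:uniqueness} found in~\cite[p.~46--47]{car56}'' without reproducing it. Your proposal is precisely the standard Cartan--Eilenberg dimension-shifting argument (define $\Phi^{n}$ from $\Phi^{n+1}$ via the defining square, with the monomorphism hypothesis on $\delta_U^{n}$ forcing uniqueness and enabling the well-definedness, naturality, and connecting-homomorphism checks), so your approach coincides with the cited source.
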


G.\! Mislin uses this lemma to perform his satellite functor construction in~\cite[pp.~295--296]{mis94} which we generalise in the following. For any $m \in \mathbb{N}_0$ we can define the cohomological functor
\begin{equation}\label{eq:directsyscohomfunctor}
T^k \langle m \rangle (M) := \begin{cases} S^{k-m}T^m(M) &\text{if } k < m \\ T^k(M) &\text{if } k \geq m \end{cases}
\end{equation}
where the connecting homomorphisms are given by $\delta^k \langle m \rangle = \delta^k: T^k \rightarrow T^{k+1}$ for $k \geq m$ and $\delta^k \langle m \rangle = \varepsilon^{k-m}: S^{k-m}T^m \rightarrow S^{k-m+1}T^m$ for $k < m$. Consider the partial morphism $(\Phi_m^k := \mathrm{id}: T^k \rightarrow T^k \langle m \rangle)_{m \leq k}$ of cohomological functors defined only in degrees $m \leq k$. According to the previous lemma we can extend $\Phi_m^{\bullet}$ uniquely to a morphism of cohomological functors defined in all degrees. Using the same arguments we have for $m \leq n$ a unique morphism $\Phi_{m, n}^{\bullet}: T^{\bullet} \langle m \rangle \rightarrow T^{\bullet} \langle n \rangle$ that is equal to the identity morphism in any degree $n \leq k \in \mathbb{Z}$. Using the uniqueness of all these morphisms, we conclude for $m \leq n \leq o \in \mathbb{N}_0$ that $\Phi_n^{\bullet} = \Phi_{m,n}^{\bullet} \circ \Phi_m^{\bullet}$ and $\Phi_{m,o}^{\bullet} = \Phi_{n,o}^{\bullet} \circ \Phi_{m,n}^{\bullet}$. This yields a morphism to a direct system $T^{\bullet} \rightarrow (T^{\bullet} \langle m \rangle, \Phi_{m,n}^{\bullet})_{m \leq n \in \mathbb{N}_0}$. This is where the existence of countable direct limits $\varinjlim_{\mathcal{D}, k \in \mathbb{N}_0}$ in the codomain category $\mathcal{D}$ comes into play. For every $n \in \mathbb{Z}$ we define the $n^{\text{th}}$ term of the satellite functor construction as
\[ \widehat{T}^n(M) := \varinjlim_{k \in \mathbb{N}_0} T^n \langle k \rangle(M) \, . \]
Unravelling the definitions and noting that $\lbrace k \in \mathbb{N}_0 \mid k \geq n \rbrace$ is cofinal in $\mathbb{N}_0$, we see that
\begin{equation}\label{eq:expliciterms}
\widehat{T}^n(M) =\!\!\!\! \varinjlim_{k \in \mathbb{N}_0, k \geq n} \big(S^{n-k}T^k(M), \Phi_{k, k+1}^n(M) \big) = \varinjlim_{k \in \mathbb{N}_0} \big( S^{-k}T^{n+k}(M), \Phi_{n+k, n+k+1}^n(M) \big) \, .
\end{equation}
One obtains the connecting homomorphisms of the above satellite functor construction as follows. By Axiom~\ref{axm:les}, every short exact sequence $0 \rightarrow A \rightarrow B \rightarrow C \rightarrow 0$ in the category $\mathcal{C}$ yields a long exact sequence in $T^{\bullet}$ and $T^{\bullet} \langle m \rangle$ for any $m \in \mathbb{N}_0$. We obtain a long exact sequence in $\widehat{T}^{\bullet} = \varinjlim_{\mathcal{D}, m \in \mathbb{N}_0} T^{\bullet} \langle m \rangle$ and in particular, a connecting homomorphism $\widehat{\delta}^{\bullet}$ only if the direct limit functor $\varinjlim_{\mathcal{D}, m \in \mathbb{N}_0}$ is exact. This proves the following generalisation of~\cite[Theorem~2.2]{mis94}.

\begin{thm}\label{thm:satellitemislin}
Let $\mathcal{C}$ be an abelian category with enough projective object and let $\mathcal{D}$ be an abelian category in which all direct limits exist and are exact. Let $(T^{\bullet}, \delta^{\bullet}): \mathcal{C} \rightarrow \mathcal{D}$ be any cohomological functor.  Then
\begin{itemize}
\item a Mislin completion $(\widehat{T}^{\bullet}, \widehat{\delta}^{\bullet})$ of $(T^{\bullet}, \delta^{\bullet})$ exists and
\item can be defined via the the satellite functor construction.
\end{itemize}
\end{thm}

\begin{notn}
For the rest of the paper we assume that $\mathcal{C}$ is an abelian category with enough projective object and that $\mathcal{D}$ is an abelian category in which all direct limits exist and are exact.
\end{notn}

Let us provide an explicit formula for the connecting homomorphisms of the satellite functor construction. For this, note that one can extend the concept of left satellite functors from~\cite[Corollary~5.3]{car56} to natural transformations. Let $\varphi: F \rightarrow G$ be a natural transformation between two functors $F, G: \mathcal{C} \rightarrow \mathcal{D}$. One can define the natural transformation $S^{-1}\varphi(C): S^{-1}F(C) \rightarrow S^{-1}G(C)$ in the same manner as we have defined induced morphisms $S^{-1}T^n(f): S^{-1}T^n(C') \rightarrow S^{-1}T^n(C)$. Write $S^0{\varphi} := \varphi$ and
\[ S^{-k}\varphi := S^{-1}(S^{-k+1}\varphi): S^{-k}F \rightarrow S^{-k}G \]
for any $k \in \mathbb{N}$. Recall from Section~\ref{sec:constructions} that connecting homomorphisms are natural transformation that factor as
\[ \delta^n: T^n \xrightarrow{\underline{\delta}^n} S^{-1}T^{n+1} \xrightarrow{\varepsilon^{-1}} T^{n+1} \, . \]

\begin{thm}\label{thm:satelliteconnmorphisms}
For a short exact sequence $0 \rightarrow A \rightarrow B \rightarrow C \rightarrow 0$ in $\mathcal{C}$ and $k \in \mathbb{N}_0$ denote by $\varepsilon^{-k}: S^{-k}T^{n+k}(C) \rightarrow S^{-k+1}T^{n+k}(A)$ the associated connecting homomorphism. Then the $n^{\text{th}}$ connecting homomorphism of the satellite functor construction is given by
\[ \widehat{\delta}^n = \varinjlim_{k \in \mathbb{N}} \varepsilon^{-k}: \widehat{T}^n(C) \rightarrow \widehat{T}^{n+1}(A) \]
where the morphisms $\varepsilon^{-k}$ are connected over $S^{-k}\underline{\delta}^{n+k}: S^{-k}T^{n+k} \rightarrow S^{-k-1}T^{n+k+1}$. In particular, $\underline{\delta}^n: T^n \rightarrow S^{-1}T^{n+1}$ is a natural transformation for any $n \in \mathbb{Z}$.
\end{thm}

\begin{proof}
The proof is based on determining all terms of each morphism of the cohomological functors $\Phi_{k, k+1}^{\bullet}: T^{\bullet}\langle k \rangle \rightarrow T^{\bullet}\langle k+1 \rangle$ that give rise to the satellite functor construction. We already know that $\Phi_{k, k+1}^n = \mathrm{id}_{\mathcal{D}}$ for $k < n$. To describe $\Phi_{k, k+1}^k$, consider a short exact sequence $0 \rightarrow M \rightarrow P \rightarrow C \rightarrow 0$ with $P$ projective. We obtain from Diagram~\ref{diag:delta} the commutative diagram
\begin{equation}\label{diag:factorise}
\begin{tikzcd}
  {} \dots {} \arrow[r]
    &T^k(P) \arrow[r] \arrow[d, "T^k(h)"]
    &T^k(C) \arrow[r, "\delta^k"] \arrow[d, "\mathrm{id}_{T^k(C)}"]
    &T^{k+1}(M) \arrow[r] \arrow[d, "T^{k+1}(h^{\ast})"]
    &T^{k+1}(P) \arrow[r] \arrow[d, "T^{k+1}(h)"]
    &{} \dots {} \\
  {} \dots {} \arrow[r]
    &T^k(B) \arrow[r]
    &T^k(C) \arrow[r, "\delta^k"]
    &T^{k+1}(A) \arrow[r]
    &T^{k+1}(B) \arrow[r]
    &{} \dots {}
\end{tikzcd}
\end{equation}
Recalling that $S^{-1}T^{k+1}(C)$ is a kernel and $\varepsilon^{-1}: S^{-1}T^{k+1}(C) \rightarrow T^{k+1}(M)$ its canonical monomorphism, we can write the middle square as
\begin{center}
\begin{tikzcd}
  T^k(C) \arrow[r, "\underline{\delta}^k"] \arrow[d, "\mathrm{id}_{T^k(C)}"]
    &S^{-1}T^{k+1}(C) \arrow[r, "\varepsilon^{-1}"]
    &T^{k+1}(M) \arrow[d, "T^{k+1}(h^{\ast})"] \\
  T^k(C) \arrow[rr, "\delta^n"]
    &{}
    &T^{k+1}(A)
\end{tikzcd}
\end{center}
By definition, $\delta^{k}\langle k+1 \rangle = \varepsilon^{-1}: S^{-1}T^{k+1}(C) \xrightarrow{\varepsilon^{-1}} T^{k+1}(M) \xrightarrow{T^{k+1}(h^{\ast})} T^{k+1}(A)$. Tilting the above diagram, we obtain the desired commutative diagram
\begin{equation}\label{diag:twoconnmorphisms}
\begin{tikzcd}
  T^k(C) \arrow[r, "\delta^k"] \arrow[d, "\underline{\delta}^k"]
    &T^{k+1}(A) \arrow[d, "\mathrm{id}_{T^{k+1}(A)}"] \\
  S^{-1}T^{k+1}(C) \arrow[r, "\varepsilon^{-1}"]
    &T^{k+1}(A)
\end{tikzcd}
\end{equation}
By the proof of Lemma~\ref{lem:uniqueness} found in~\cite[p.~46--47]{car56}, $\underline{\delta}^k = \Phi_{k, k+1}^k: T^k \rightarrow S^{-1}T^{k+1}$ is a natural transformation. Thus, $\Phi_{k, k+1}^n = S^{n-k}\underline{\delta}^k: S^{n-k}T^k \rightarrow S^{n-k+1}T^{k+1}$ for any $n < k$. In particular, the square
\begin{equation}\label{diag:takesatellitetrsfs}
\begin{tikzcd}
    S^{n-k}T^k(C) \arrow[r, "\varepsilon^{n-k}"] \arrow[d, "S^{n-k}\underline{\delta}^k"] & S^{-k+1}T^n(A) \arrow[d, "S^{n-k+1}\underline{\delta}^k"] \\
    S^{n-k-1}T^{k+1}(C) \arrow[r, "\varepsilon^{n-k-1}"] & S^{n-k}T^{k+1}(A)
\end{tikzcd}
\end{equation}
commutes. In the direct limit, these squares give rise to the connecting homomorphism $\widehat{\delta}^n$ and the formula in the lemma follows from reindexing as in Equation~\ref{eq:expliciterms}.
\end{proof}

Lastly, we provide an explicit formula for the induced morphisms in the satellite functor construction.

\begin{thm}\label{thm:inducedsatellitemorphs}
Let $f: A \rightarrow B$ be a morphism in $\mathcal{C}$. Then for any $n \in \mathbb{Z}$ the commuting squares
\begin{center}
\begin{tikzcd}
    S^{-k}T^{n+k}(A) \arrow[rrr, "S^{-k}T^{n+k}(f)"] \arrow[d, "S^{-k}\underline{\delta}^{n+k}"] & & & S^{-k}T^{n+k}(B) \arrow[d, "S^{-k}\underline{\delta}^{n+k}"] \\
    S^{-k-1}T^{n+k+1}(A) \arrow[rrr, "S^{-k-1}T^{n+k+1}(f)"] & & & S^{-k-1}T^{n+k+1}(B)
\end{tikzcd}
\end{center}
give rise to
\[ \widehat{T}^n(f) = \varinjlim_{\mathcal{D}, k \in \mathbb{N}_0} S^{-k}T^{n+k}(f): \widehat{T}(A) \rightarrow \widehat{T}(B) \, , \]
the induced morphism in the satellite functor construction.
\end{thm}

\begin{proof}
This follows from Equation~\ref{eq:expliciterms} and the description of the cohomological functors $\Phi_{n+k, n+k+1}^{\bullet}$ found in the proof of Theorem~\ref{thm:satelliteconnmorphisms}.
\end{proof}

\section{The resolution and the satellite functor construction}\label{sec:resols}

The main goal of this section is to prove that the resolution construction gives rise to Mislin completions of cohomological functors. For this purpose, we develop explicit formulae for its induced morphisms and connecting homomorphisms of the resolution construction. This is done by developing isomorphisms between the terms of the satellite functor construction and the ones of the resolution construction. As a consequence of these formulae, we demonstrate that there are countably many distinct constructions of Mislin  completions.

\subsection{Termwise isomorphisms}

To set up notation for the resolution construction, let $M \in \mathrm{obj}(\mathcal{C})$, $(M_n)_{n \in \mathbb{N}_0}$ be a projective resolution of $M$ and $\widetilde{M}_k$ be the $k^{\text{th}}$ syzygy of $M_{\bullet}$ for $k \in \mathbb{N}_0$. For $n \in \mathbb{Z}$ we denote the $n^{\text{th}}$ term of the satellite functor construction specifically by $\widehat{T}^n(M)$. If $\delta^{n+k}: T^{n+k}(\widetilde{M}_k) \rightarrow T^{n+k+1}(\widetilde{M}_{k+1})$ are taken as in Section~\ref{sec:constructions}, we write the $n^{\text{th}}$ term of the resolution construction specifically as $T_{Res}^n(M) := \varinjlim_{k \in \mathbb{N}_0} \big(T^{n+k}(\widetilde{M}_k), \delta^{n+k}\big)$. A priori, the terms of the resolution construction depend on a choice of projective resolution. This issue is resolved by demonstrating

\begin{lem}\label{lem:resolisoms}
For any $n \in \mathbb{Z}$ there is an isomorphism $\omega_n(M): \widehat{T}^n(M) \rightarrow T_{Res}^n(M)$ in $\mathcal{D}$.
\end{lem}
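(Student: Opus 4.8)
The plan is to compare the two direct systems term by term and build the isomorphism $\omega_n(M)$ as a direct limit of isomorphisms between their $k^{\text{th}}$ stages. Recall that the $k^{\text{th}}$ term of the resolution construction is $T^{n+k}(\widetilde{M}_k)$, where $\widetilde{M}_k$ is the $k^{\text{th}}$ syzygy coming from the chosen projective resolution $(M_n)_{n \in \mathbb{N}_0}$, while the $k^{\text{th}}$ term of the satellite construction is $S^{-k}T^{n+k}(M)$ by Equation~\ref{eq:expliciterms}. The first step is to establish, for each fixed $k$, a natural isomorphism $S^{-k}T^{n+k}(M) \cong T^{n+k}(\widetilde{M}_k)$. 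One does this by induction on $k$ using dimension shifting: the short exact sequence $0 \rightarrow \widetilde{M}_{1} \rightarrow M_0 \rightarrow M \rightarrow 0$ has $M_0$ projective, so from Axiom~\ref{axm:les} the connecting homomorphism gives $S^{-1}T^{n+1}(M) = \mathrm{Ker}\big(T^{n+1}(\widetilde{M}_1) \rightarrow T^{n+1}(M_0)\big)$; but dimension shifting one more degree, i.e. using the same short exact sequence with $T^{n}$ in place of $T^{n+1}$, shows that this kernel is already all of $T^{n+1}(\widetilde{M}_1)$ \emph{after} passing to the next syzygy — more precisely one applies this reasoning iteratively so that $S^{-k}T^{n+k}(M)$, defined as an iterated kernel, collapses onto $T^{n+k}(\widetilde{M}_k)$. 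Since left satellite functors are independent of the chosen short exact sequence (as recalled from~\cite[Section~III.1]{car56}), one may compute $S^{-1}$ at each stage using precisely the short exact sequence $0 \rightarrow \widetilde{M}_{j+1} \rightarrow M_j \rightarrow \widetilde{M}_j \rightarrow 0$, which makes the inductive step transparent: $S^{-1}\big(S^{-k+1}T^{n+k}\big)(M) = \mathrm{Ker}\big(S^{-k+1}T^{n+k}(\widetilde{M}_1) \rightarrow S^{-k+1}T^{n+k}(M_0)\big)$ and the right-hand group vanishes on the projective $M_0$ when $k \geq 1$.

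The second step is to check that these stagewise isomorphisms are compatible with the transition maps of the two direct systems, i.e. that the square
\begin{center}
\begin{tikzcd}
S^{-k}T^{n+k}(M) \arrow[r, "\cong"] \arrow[d, "S^{-k}\underline{\delta}^{n+k}"] & T^{n+k}(\widetilde{M}_k) \arrow[d, "\delta^{n+k}"] \\
S^{-k-1}T^{n+k+1}(M) \arrow[r, "\cong"] & T^{n+k+1}(\widetilde{M}_{k+1})
\end{tikzcd}
\end{center}
commutes, where the left vertical map is the transition map identified in Equation~\ref{eq:expliciterms} and Lemma~\ref{lem:satelliteconnmorphisms}, and the right vertical map is the connecting homomorphism attached to $0 \rightarrow \widetilde{M}_{k+1} \rightarrow M_k \rightarrow \widetilde{M}_k \rightarrow 0$. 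This is really a naturality statement: the map $\underline{\delta}$ was built (in the proof of Lemma~\ref{lem:satelliteconnmorphisms}, see Diagram~\ref{diag:twoconnmorphisms}) precisely so that it corresponds, through the kernel identification, to the genuine connecting homomorphism of an appropriate short exact sequence with projective middle term, which here is exactly the syzygy sequence. Once this compatibility is in place, one invokes the functoriality of $\varinjlim$ over $\mathbb{N}_0$ (which exists and is exact in $\mathcal{D}$ by our standing assumption on $\mathcal{D}$) to obtain the desired isomorphism $\omega_n(M) := \varinjlim_{k \in \mathbb{N}_0} \big(\text{stagewise isomorphism}\big) : \widehat{T}^n(M) \xrightarrow{\ \sim\ } T^n_{Res}(M)$, and it is automatically an isomorphism because a filtered colimit of isomorphisms of compatible direct systems is an isomorphism.

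The step I expect to be the main obstacle is the bookkeeping in the first step — correctly matching the \emph{iterated} kernel defining $S^{-k}T^{n+k}(M)$ with the single group $T^{n+k}(\widetilde{M}_k)$, since the satellite functor is built from one chosen short exact sequence $0 \rightarrow K \rightarrow P \rightarrow M \rightarrow 0$ whereas the syzygy tower uses a whole resolution; one must carefully use the independence-of-resolution property of satellites at each inductive stage and keep track of which degree of $T$ is being applied. A related subtlety is ensuring the identification is \emph{natural} enough that the compatibility square of the second step genuinely commutes on the nose rather than merely up to sign or up to the kernel inclusion; this is where the explicit description of $\Phi^{\bullet}_{n+k,n+k+1}$ from the proof of Lemma~\ref{lem:satelliteconnmorphisms} — in particular the factorisation $\underline{\delta}^k = \Phi^k_{k,k+1}$ through the kernel $S^{-1}T^{k+1}$ — does the essential work. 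Everything else is a routine application of the universal property of direct limits together with the exactness hypothesis on $\mathcal{D}$.
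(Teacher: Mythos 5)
Your plan has a genuine gap at the very first step. You claim a stagewise isomorphism $S^{-k}T^{n+k}(M) \cong T^{n+k}(\widetilde{M}_k)$, but this does not hold. Your own inductive formula
\[
S^{-1}\big(S^{-k+1}T^{n+k}\big)(M) = \mathrm{Ker}\big(S^{-k+1}T^{n+k}(\widetilde{M}_1) \rightarrow S^{-k+1}T^{n+k}(M_0)\big)
\]
requires $S^{-k+1}T^{n+k}(M_0) = 0$, which is true only when the satellite order $k-1$ is at least $1$, i.e.\ $k \geq 2$. For $k = 1$ the right-hand group is $S^{0}T^{n+1}(M_0) = T^{n+1}(M_0)$, which need not vanish. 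The dimension-shifting argument therefore carries you only as far as $S^{-k}T^{n+k}(M) \cong S^{-1}T^{n+k}(\widetilde{M}_{k-1})$, and the remaining map $\varepsilon^{-1}\colon S^{-1}T^{n+k}(\widetilde{M}_{k-1}) \hookrightarrow T^{n+k}(\widetilde{M}_k)$ is only a monomorphism (by definition it is the kernel of $T^{n+k}(\widetilde{M}_k) \rightarrow T^{n+k}(M_{k-1})$). Consequently the horizontal edges of your compatibility square in the second step are monomorphisms rather than isomorphisms, and the concluding ``filtered colimit of stagewise isomorphisms'' argument does not apply.

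What rescues the statement, and what the paper does, is a separate argument for the last step. The cokernels $N_{n+k}$ of the monomorphisms $\varepsilon^{-1}$ form a direct system whose transition maps $m_{n+k}\colon N_{n+k} \rightarrow N_{n+k+1}$ vanish: one checks $m_{n+k} \circ q_{n+k} = q_{n+k+1} \circ \varepsilon^{-1} \circ \underline{\delta}^{n+k} = 0$, and since $q_{n+k}$ is an epimorphism, $m_{n+k} = 0$. Exactness of countable direct limits in $\mathcal{D}$ then forces $\varinjlim N_{n+k} = 0$, so $\varinjlim \varepsilon^{-1}$ is an isomorphism even though each $\varepsilon^{-1}$ is not. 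The paper therefore factors $\omega_n(M)$ as the composite of $\varinjlim o^k$ (where $o^k = \varepsilon^{-2} \circ \cdots \circ \varepsilon^{-k}$ is the termwise isomorphism your argument correctly produces for $l \geq 2$) with $\varinjlim \varepsilon^{-1}$, and verifies the commutativity of the full ladder in Diagram~\ref{diag:finalisom}. You need this additional cokernel argument; a purely termwise comparison cannot close the gap.
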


A proof of this lemma can be extracted from~\cite[p.~299]{mis94} for modules over rings and can be found in~\cite[p.~20]{guo23} for relative Ext-functors. However, the resulting isomorphisms are not in an explicit form in both cases. In order to develop explicit formulae for the induced morphisms and connecting homomorphisms of the resolution construction, we require an explicit formula for such an isomorphism. Hence, we remedy this issue by providing our own proof the above lemma.

\begin{proof}
The proof consists of a two step endeavor where we introduce the relevant notation throughout its course. In the first step, we construct an isomorphism
\[ \eta_n(M): \varinjlim_{k \in \mathbb{N}} (S^{-1}T^{n+k}(\widetilde{M}_{k-1}), \Sigma^{-1}\delta^{n+k}) \rightarrow T_{Res}^n(M) \]
and in the second step we extend it to an isomorphism $\omega_n(M): \widehat{T}^n(M) \rightarrow T_{Res}^n(M)$. \\

{\textbf{Step 1}} Defining an isomorphism $\eta_n(M): \varinjlim_{k \in \mathbb{N}} (S^{-1}T^{n+k}(\widetilde{M}_{k-1}), \Sigma^{-1}\delta^{n+k}) \rightarrow T_{Res}^n(M)$ \\

Denote by $\underline{\delta}^{n+k}: T^{n+k}(\widetilde{M}_k) \rightarrow S^{-1}T^{n+k+1}(\widetilde{M}_k)$ the morphism associated to the short exact sequence $0 \rightarrow \widetilde{M}_{k+1} \rightarrow M_k \rightarrow \widetilde{M}_k \rightarrow 0$ and by $\varepsilon^{-1}: S^{-1}T^{n+k}(\widetilde{M}_{k-1}) \rightarrow T^{n+k}(\widetilde{M}_k)$ the canonical morphism from the kernel. Define the morphism
\[ \Sigma^{-1}\delta^{n+k} := \underline{\delta}^{n+k} \circ \varepsilon^{-1}: \; \; S^{-1}T^{n+k}(\widetilde{M}_{k-1}) \rightarrow T^{n+k}(\widetilde{M}_k) \rightarrow S^{-1}T^{n+k+1}(\widetilde{M}_k) \, . \]
This morphism together with the cokernel of $\varepsilon^{-1}$ denoted by $q_{n+k}: T^{n+k}(\widetilde{M}_k) \rightarrow N_{n+k}$ extend Diagram~\ref{diag:twoconnmorphisms} in a commutative manner as
\begin{equation}\label{diag:cokernels}
\begin{tikzcd}
  0 \arrow[r]
    & S^{-1}T^{n+k}(\widetilde{M}_{k-1}) \arrow[r, "\varepsilon^{-1}"] \arrow[d, "\Sigma^{-1}\delta^{n+k}" near start]
    & T^{n+k}(\widetilde{M}_k) \arrow[r, "q_{n+k}"] \arrow[dl, "\underline{\delta}^{n+k}" near start] \arrow[d, "\delta^{n+k}" near start]
    & N_{n+k} \arrow[r] \arrow[d, "m_{n+k}"]
    & 0 \\
  0 \arrow[r]
    & S^{-1}T^{n+k+1}(\widetilde{M}_k) \arrow[r, "\varepsilon^{-1}"]
    & T^{n+k+1}(\widetilde{M}_{k+1}) \arrow[r, "q_{n+k+1}"]
    & N_{n+k+1} \arrow[r]
    & 0
\end{tikzcd}
\end{equation}
Since
\[ m_{n+k} \circ q_{n+k} = q_{n+k+1} \circ \varepsilon^{-1} \circ \underline{\delta}^{n+k}: T^{n+k}(\widetilde{M}_k) \rightarrow N_{n+k+1} \]
is the zero morphism, we deduce that $m_{n+k} = 0$. Note that these diagrams form a direct system of short exact sequences where countable direct limits in $\mathcal{D}$ are exact. Thus, we obtain the required isomorphism
\[ \eta_n(M) = \varinjlim \varepsilon^{-1}: \varinjlim_{k \in \mathbb{N}} (S^{-1}T^{n+k}(\widetilde{M}_{k-1}), \Sigma^{-1}\delta^{n+k}) \rightarrow T_{Res}^n(M) \, . \]

{\textbf{Step 2}} Defining an isomorphism $\omega_n(M): \widehat{T}^n(M) \rightarrow T_{Res}^n(M)$ \\

We connect the terms of the direct system $(S^{-k}T^{n+k}(M), S^{-k}\underline{\delta}^{n+k})_{k \in \mathbb{N}_0}$ to the ones of $(T^{n+k}(\widetilde{M}_k), \delta^{n+k})_{k \in \mathbb{N}_0}$ to construct this isomorphism. This requires a larger commutative diagram for which we introduce the necessary morphisms. As for left satellite functors, we define $\Sigma^0{\delta^{n+k}} := \delta^{n+k}$ and
\begin{align}
\Sigma^{-l}\delta^{n+k+l} &:= S^{-l+1}\underline{\delta}^{n+k+l} \circ \varepsilon^{-l}: \label{eq:sigmafunctors}  \\
&S^{-l}T^{n+k+l}(\widetilde{M}_k) \rightarrow S^{-l+1}T^{n+k+l}(\widetilde{M}_{k+1}) \rightarrow S^{-l}T^{n+k+l+1}(\widetilde{M}_{k+1}) \nonumber
\end{align}
for any $l \in \mathbb{N}$. We can extend Diagram~\ref{diag:takesatellitetrsfs} to another commutative diagram of the form
\begin{equation}\label{diag:commtriangles}
\begin{tikzcd}
    S^{-l}T^{n+k+l}(\widetilde{M}_k) \arrow[r, "\varepsilon^{-l}"] \arrow[dr, "\Sigma^{-l}\delta^{n+k+l}"] \arrow[d, "S^{-l}\underline{\delta}^{n+k+l}" near end] & S^{-l+1}T^{n+k+l}(\widetilde{M}_{k+1}) \arrow[d, "S^{-l+1}\underline{\delta}^{n+k+l}"] \\
    S^{-l-1}T^{n+k+l+1}(\widetilde{M}_k) \arrow[r, "\varepsilon^{-l-1}"] & S^{-l}T^{n+k+l+1}(\widetilde{M}_{k+1})
\end{tikzcd}
\end{equation}
Because left satellite functors vanish on projective objects such as $M_k$, the morphism $\varepsilon^{-l}: S^{-l}T^{n+k+l}(\widetilde{M}_k) \rightarrow S^{-l+1}T^{n+k+l}(\widetilde{M}_{k+1})$ is an isomorphism. Define the isomorphisms
\begin{align}
\widetilde{o}^k := \varepsilon^{-3} \circ {} \dots {} \circ \varepsilon^{-k+1} \circ \varepsilon^{-k}: \; \; {} &S^{-k}T^{n+k}(M) \rightarrow S^{-2}T^{n+k}(\widetilde{M}_{k-2}) \nonumber \\
\text{and} \quad o^{k} := \varepsilon^{-2} \circ \widetilde{o}^k: \; \; {} &S^{-k}T^{n+k}(M) \rightarrow S^{-1}T^{n+k}(\widetilde{M}_{k-1}) \, . \label{eq:themapsok}
\end{align}
Using the above morphisms together with Diagram~\ref{diag:twoconnmorphisms}, Diagram~\ref{diag:takesatellitetrsfs} and Diagram~\ref{diag:commtriangles}, we can write the commutative diagram

\begin{equation}\label{diag:finalisom}
\begin{tikzcd}
  {}
    & {}
    & {}
    & {\scriptstyle T^n(M)} \arrow[d, "{\scriptscriptstyle \delta^{n}}" near start] \arrow[dl, "{\scriptscriptstyle \underline{\delta}^{n}}" near start] \\
  {}
    & {}
    & {\scriptstyle S^{-1}T^{n+1}(M)} \arrow[r, "{\scriptscriptstyle \varepsilon^{-1}}"] \arrow[d, "{\scriptscriptstyle \Sigma^{-1}\delta^{n+1}}" near start] \arrow[dl, "{\scriptscriptstyle S^{-1}\underline{\delta}^{n+1}}" near start]
    & {\scriptstyle T^{n+1}(\widetilde{M}_1)} \arrow[d, "{\scriptscriptstyle \delta^{n+1}}" near start] \arrow[dl, "{\scriptscriptstyle \underline{\delta}^{n+1}}" near start] \\
  {}
    & {\scriptstyle S^{-2}T^{n+2}(M)} \arrow[r, "{\scriptscriptstyle \varepsilon^{-2}}"] \arrow[d, "{\scriptscriptstyle \Sigma^{-2}\delta^{n+2}}" near start] \arrow[dl, "{\scriptscriptstyle S^{-2}\underline{\delta}^{n+2}}" near start]
    & {\scriptstyle S^{-1}T^{n+2}(\widetilde{M}_1)} \arrow[r, "{\scriptscriptstyle \varepsilon^{-1}}"] \arrow[d, "{\scriptscriptstyle \Sigma^{-1}\delta^{n+2}}" near start] \arrow[dl, "{\scriptscriptstyle S^{-1}\underline{\delta}^{n+2}}" near start]
    & {\scriptstyle T^{n+2}(\widetilde{M}_2)} \arrow[d, "{\scriptscriptstyle \delta^{n+2}}" near start] \arrow[dl, "{\scriptscriptstyle \underline{\delta}^{n+2}}" near start] \\
  {\scriptstyle S^{-3}T^{n+3}(M)} \arrow[r, "{\scriptscriptstyle \widetilde{o}^3}"] \arrow[d, "{\scriptscriptstyle S^{-3}\underline{\delta}^{n+3}}" near start]
    & {\scriptstyle S^{-2}T^{n+3}(\widetilde{M}_1)} \arrow[r, "{\scriptscriptstyle \varepsilon^{-2}}"] \arrow[d, "{\scriptscriptstyle \Sigma^{-2}\delta^{n+3}}" near start]
    & {\scriptstyle S^{-1}T^{n+3}(\widetilde{M}_2)} \arrow[r, "{\scriptscriptstyle \varepsilon^{-1}}"] \arrow[d, "{\scriptscriptstyle \Sigma^{-1}\delta^{n+3}}" near start] \arrow[dl, "{\scriptscriptstyle S^{-1}\underline{\delta}^{n+3}}" near start]
    & {\scriptstyle T^{n+3}(\widetilde{M}_3)} \arrow[d, "{\scriptscriptstyle \delta^{n+3}}" near start] \arrow[dl, "{\scriptscriptstyle \underline{\delta}^{n+3}}" near start] \\
  {\scriptstyle S^{-4}T^{n+4}(M)} \arrow[r, "{\scriptscriptstyle \widetilde{o}^4}"] \arrow[d]
    & {\scriptstyle S^{-2}T^{n+4}(\widetilde{M}_2)} \arrow[r, "{\scriptscriptstyle \varepsilon^{-2}}"] \arrow[d]
    & {\scriptstyle S^{-1}T^{n+4}(\widetilde{M}_3)} \arrow[r, "{\scriptscriptstyle \varepsilon^{-1}}"] \arrow[d]
    & {\scriptstyle T^{n+4}(\widetilde{M}_4)} \arrow[d] \\
  {} {\scriptstyle \vdots} {} \arrow[r] \arrow[d]
    & {} {\scriptstyle \vdots} {} \arrow[r] \arrow[d]
    & {} {\scriptstyle \vdots} {} \arrow[r] \arrow[d]
    & {} {\scriptstyle \vdots} {} \arrow[d] \\
  {\scriptstyle S^{-k}T^{n+k}(M)} \arrow[r, "{\scriptscriptstyle \widetilde{o}^k}"] \arrow[rr, bend left = 15, crossing over, "{\scriptscriptstyle o^k}" near end] \arrow[d, "{\scriptscriptstyle S^{-k}\underline{\delta}^{n+k}}" near start]
    & {\scriptstyle S^{-2}T^{n+k}(\widetilde{M}_{k-2})} \arrow[r, "{\scriptscriptstyle \varepsilon^{-2}}"] \arrow[d, "{\scriptscriptstyle \Sigma^{-2}\delta^{n+k}}" near start]
    & {\scriptstyle S^{-1}T^{n+k}(\widetilde{M}_{k-1})} \arrow[r, "{\scriptscriptstyle \varepsilon^{-1}}"] \arrow[d, "{\scriptscriptstyle \Sigma^{-1}\delta^{n+k}}" near start] \arrow[dl, "{\scriptscriptstyle S^{-1}\underline{\delta}^{n+k}}" near start]
    & {\scriptstyle T^{n+k}(\widetilde{M}_k)} \arrow[d, "{\scriptscriptstyle \delta^{n+k}}" near start] \arrow[dl, "{\scriptscriptstyle \underline{\delta}^{n+k}}" near start] \\
  {\scriptstyle S^{-k-1}T^{n+k+1}(M)} \arrow[r, "{\scriptscriptstyle \widetilde{o}^{k+1}}"] \arrow[d]
    & {\scriptstyle S^{-2}T^{n+k+1}(\widetilde{M}_{k-1})} \arrow[r, "{\scriptscriptstyle \varepsilon^{-2}}"] \arrow[d]
    & {\scriptstyle S^{-1}T^{n+k+1}(\widetilde{M}_k)} \arrow[r, "{\scriptscriptstyle \varepsilon^{-1}}"] \arrow[d]
    & {\scriptstyle T^{n+k+1}(\widetilde{M}_{k+1})} \arrow[d] \\
  {} {\scriptstyle \vdots} {}
    & {} {\scriptstyle \vdots} {}
    & {} {\scriptstyle \vdots} {}
    & {} {\scriptstyle \vdots} {}
\end{tikzcd}
\end{equation}

We see that the right most column corresponds to the direct system $(T^{n+k}(\widetilde{M}_k), \delta^{n+k})_{k \in \mathbb{N}_0}$ while the top diagonal together with the left most column forms the direct system $(S^{-k}T^{n+k}(M), S^{-k}\underline{\delta}^{n+k})_{k \in \mathbb{N}_0}$. The right most squares in the diagram recover the isomorphism
\[ \eta_n(M) := \varinjlim \varepsilon^{-1}: \varinjlim_{k \in \mathbb{N}} (S^{-1}T^{n+k}(\widetilde{M}_{k-1}), \Sigma^{-1}\delta^{n+k}) \rightarrow T_{Res}^n(M) \, . \]
For $k \geq 3$, all the remaining squares yield together squares of the form
\begin{center}
\begin{tikzcd}
  S^{-k}T^{n+k}(M) \arrow[r, "o^k"] \arrow[d, "S^{-k}\underline{\delta}^{n+k}"]
    & S^{-1}T^{n+k}(\widetilde{M}_{k-1}) \arrow[d, "\Sigma^{-1}\delta^{n+k}"] \\
  S^{-k-1}T^{n+k+1}(M) \arrow[r, "o^{k+1}"]
    & S^{-1}T^{n+k+1}(\widetilde{M}_k)
\end{tikzcd}
\end{center}
As the morphisms $o^k$ are isomorphisms and countable direct limits in $\mathcal{D}$ are exact, this gives rise to the isomorphism
\[ \varinjlim o^k: \widehat{T}^n(M) \rightarrow \varinjlim_{k \in \mathbb{N}} (S^{-1}T^{n+k}(\widetilde{M}_{k-1}), \Sigma^{-1}\delta^{n+k}) \, . \]
Therefore,
\[ \omega_n(M) := (\varinjlim o^k) \circ \eta_n(M) = \varinjlim (o^k \circ \varepsilon^{-1}): \widehat{T}^n(M) \rightarrow T_{Res}^n(M) \]
is our desired isomorphism. Since the morphisms $o^k \circ \varepsilon^{-1}$ span the entire width of Diagram~\ref{diag:finalisom}, every single morphism was needed to construct this isomorphism.
\end{proof}

\subsection{Induced morphisms and connecting homomorphisms}

We present an explicit formula for the induced morphisms of the resolution construction.

\begin{defn}\label{defn:resolindmorph}
Let $f: M \rightarrow N$ be a morphism in $\mathcal{C}$ and $M_{\bullet}$, $N_{\bullet}$ be projective resolutions of $M$, $N$. We define a sequence $(\widetilde{f}_k: \widetilde{M}_k \rightarrow \widetilde{N}_k)_{k \in \mathbb{N}_0}$ inductively as follows. We set $\widetilde{f}_0 := f$ and impose that $\widetilde{f}_{k+1}$ is a lift $\widetilde{f}_k$, meaning that diagram
\begin{equation}\label{diag:complexlift}
\begin{tikzcd}
    0 \arrow[r] & \widetilde{M}_{k+1} \arrow[r] \arrow[d, "\widetilde{f}_{k+1}"] & M_k \arrow[r] \arrow[d, "f_k"] & \widetilde{M}_k \arrow[r] \arrow[d, "\widetilde{f}_k"] & 0 \\
    0 \arrow[r] & \widetilde{N}_{k+1} \arrow[r] & N_k \arrow[r] & \widetilde{N}_k \arrow[r] & 0
\end{tikzcd}
\end{equation}
commutes. Then the direct limit of the commutative diagrams
\begin{center}
\begin{tikzcd}
    T^{n+k}(\widetilde{M}_k) \arrow[rr, "T^{n+k}(\widetilde{f}_k)"] \arrow[d, "\delta^{n+k}"] & & T^{n+k}(\widetilde{N}_k) \arrow[d, "\delta^{n+k}"] \\
    T^{n+k+1}(\widetilde{M}_{k+1}) \arrow[rr, "T^{n+k+1}(\widetilde{f}_{k+1})"] & & T^{n+k+1}(\widetilde{N}_{k+1})
\end{tikzcd}
\end{center}
written as $T_{Res}^n(f) := \varinjlim_{k \in \mathbb{N}_0} T^{n+k}(\widetilde{f}_k)$ is the induced morphism in the resolution construction.
\end{defn}

Here, there is a similar issue as in the definition of the terms of the resolution construction. A priori, the morphism $T_{Res}^n(f): T_{Res}^n(M) \rightarrow T_{Res}^n(N)$ depends on choices of projective resolutions of $M$ and $N$. This can be resolved by demonstrating

\begin{lem}\label{lem:omeganatural}
The morphisms $\omega_n: \widehat{T}^n \rightarrow T_{Res}^n$  form a natural isomorphism. In other words, the square
\begin{center}
\begin{tikzcd}
    \widehat{T}^n(M) \arrow[r, "\omega_n(M)"] \arrow[d, "\widehat{T}^n(f)"] & T_{Res}^n(M) \arrow[d, "T_{Res}^n(f)"] \\
    \widehat{T}^n(N) \arrow[r, "\omega_n(N)"] & T_{Res}^n(N)
\end{tikzcd}
\end{center}
commutes for any morphism $f: M \rightarrow N$ in $\mathcal{C}$.
\end{lem}

\begin{proof}
This follows from the fact that the isomorphisms $\omega_n$ are constructed by the morphisms in Diagram~\ref{diag:finalisom}, which are all natural transformations.
\end{proof}

Hereby we provide an explicit formula for the connecting homomorphisms of the satellite functor construction.

\begin{defn}\label{defn:resolconnmorph}
Let $0 \rightarrow A \xrightarrow{f} B \xrightarrow{g} C \rightarrow 0$ be short exact sequence in $\mathcal{C}$. We construct a sequence of short exact sequences $(0 \rightarrow \widetilde{A}_k \xrightarrow{\widetilde{f}_k} \widetilde{B}_k \xrightarrow{\widetilde{g}_k} \widetilde{C}_k \rightarrow 0)_{k \in \mathbb{N}_0}$ in $\mathcal{C}$ inductively as follows. Set $\widetilde{D}_0 := D$ for $D \in \lbrace A, B, C \rbrace$ and $\widetilde{h}_0 := h$ for $h \in \lbrace f, g \rbrace$. We impose that the diagram as in the Horseshoe Lemma found in~\cite[p.~37]{wei94}
\begin{equation}\label{diag:horseshoe}
\begin{tikzcd}[row sep = scriptsize]
  {}
    & 0 \arrow[d]
    & 0 \arrow[d]
    & 0 \arrow[d]
    & {} \\
  0 \arrow[r]
    & \widetilde{A}_{k+1} \arrow[r, "\widetilde{f}_{k+1}"] \arrow[d]
    & \widetilde{B}_{k+1} \arrow[r, "\widetilde{g}_{k+1}"] \arrow[d]
    & \widetilde{C}_{k+1} \arrow[r] \arrow[d]
    & 0 \\
  0 \arrow[r]
    & A_k \arrow[r, "f_k"] \arrow[d]
    & B_k \arrow[r, "g_k"] \arrow[d]
    & C_k \arrow[r] \arrow[d]
    & 0 \\
  0 \arrow[r]
    & \widetilde{A}_k \arrow[r, "\widetilde{f}_k"] \arrow[d]
    & \widetilde{B}_k \arrow[r, "\widetilde{g}_k"] \arrow[d]
    & \widetilde{C}_k \arrow[r] \arrow[d]
    & 0 \\
  {}
    & 0
    & 0
    & 0
    & {}
\end{tikzcd}
\end{equation}
commutes. This means that $A_k$, $B_k$ and $C_k$ are projective and that all rows and columns are exact. Because Proposition~III.4.1 of~\cite{car56} also pertains to abelian categories, the square
\begin{equation}\label{diag:anticomm}
\begin{tikzcd}[row sep = scriptsize]
  T^{n+k}(\widetilde{C}_k) \arrow[r, "\delta^{n+k}"] \arrow[d, "\delta^{n+k}"]
    & T^{n+k+1}(\widetilde{A}_k) \arrow[d, "\delta^{n+k+1}"] \\
  T^{n+k+1}(\widetilde{C}_{k+1}) \arrow[r, "\delta^{n+k+1}"]
    & T^{n+k+2}(\widetilde{A}_{k+1})
\end{tikzcd}
\end{equation}
anticommutes for any $n \in \mathbb{Z}$ and $k \in \mathbb{N}_0$. This means that the composite morphisms in the abelian group $\mathrm{Hom}_{\mathcal{D}}(T^{n+k}(\widetilde{C}_k), T^{n+k+2}(\widetilde{A}_{k+1}))$ have opposite signs. Then the direct limit of these anticommutatitve squares
\[ \widetilde{\delta}^n := \varinjlim_{k \in \mathbb{N}_0} (-1)^k \delta^{n+k}: T_{Res}^n(C) \rightarrow T_{Res}^{n+1}(A) \]
is the connecting homomorphism of the resolution construction.
\end{defn}

A priori, this definition depends on specific choices of projective resolutions of the terms $A$, $B$ and $C$. This issue can be remedied by demonstrating

\begin{lem}\label{lem:commwithconn}
Denote by $\widehat{\delta}^{\bullet}$ the connecting homomorphism from the satellite functor construction $\widehat{T}^{\bullet}$. Then for any short exact sequence $0 \rightarrow A \rightarrow B \rightarrow C \rightarrow 0$ in $\mathcal{C}$ and any $n \in \mathbb{Z}$ the diagram
\begin{center}
\begin{tikzcd}[row sep = small]
  \widehat{T}^n(C) \arrow[r, "\omega_n"] \arrow[d, "\widehat{\delta}^n"]
  & T_{Res}^n(C) \arrow[d, "\widetilde{\delta}^n"] \\
  \widehat{T}^{n+1}(A) \arrow[r, "\omega_{n+1}"]
  & T_{Res}^{n+1}(A)
\end{tikzcd}
\end{center}
commutes.
\end{lem}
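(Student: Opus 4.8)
The plan is to reduce the square to a levelwise comparison of the two direct systems involved and then to chase a single large diagram obtained by placing two copies of Diagram~\ref{diag:finalisom} side by side and joining them by connecting homomorphisms.

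First I would fix compatible data. Applying the Horseshoe Lemma to $0\to A\to B\to C\to0$ gives projective resolutions $A_\bullet,B_\bullet,C_\bullet$ together with the short exact sequences of syzygies $0\to\widetilde{A}_k\to\widetilde{B}_k\to\widetilde{C}_k\to0$ and the syzygy presentations $0\to\widetilde{A}_{k+1}\to A_k\to\widetilde{A}_k\to0$ (and likewise for $B$ and $C$) that define the transitions of $T_{Res}^{\bullet}$, i.e.\ Diagram~\ref{diag:horseshoe}. By Lemma~\ref{lem:satelliteconnmorphisms}, $\widehat{\delta}^n=\varinjlim_k\varepsilon^{-k}$ with $\varepsilon^{-k}\colon S^{-k}T^{n+k}(C)\to S^{-k+1}T^{n+k}(A)$ the satellite connecting homomorphism of $0\to A\to B\to C\to0$; by Definition~\ref{defn:resolconnmorph}, $\widetilde{\delta}^n=\varinjlim_k(-1)^k\delta^{n+k}$ with $\delta^{n+k}\colon T^{n+k}(\widetilde{C}_k)\to T^{n+k+1}(\widetilde{A}_k)$ the connecting homomorphism of $0\to\widetilde{A}_k\to\widetilde{B}_k\to\widetilde{C}_k\to0$; and from the proof of Lemma~\ref{lem:resolisoms}, $\omega_n$ is induced by the maps $S^{-k}T^{n+k}(M)\xrightarrow{o^k}S^{-1}T^{n+k}(\widetilde{M}_{k-1})\xrightarrow{\varepsilon^{-1}}T^{n+k}(\widetilde{M}_k)$. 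Since $\omega_n(C)$, $\omega_{n+1}(A)$, $\widehat{\delta}^n$ and $\widetilde{\delta}^n$ are all colimits of levelwise morphisms over cofinal copies of $\mathbb{N}_0$, it suffices to check, for each sufficiently large $k$, that $(-1)^k\delta^{n+k}\circ\omega_n(C)_k$ and $\omega_{n+1}(A)_{k-1}\circ\varepsilon^{-k}$ agree in $T_{Res}^{n+1}(A)$; here one must keep track of the fact that $\widehat{\delta}^n$ lowers the colimit index by one whereas $\widetilde{\delta}^n$ keeps it fixed, so the comparison only asks that the two sides become equal after one further transition map $\delta^{n+k}$ of $T_{Res}^{n+1}(A)$ (the same kind of cofinal reindexing already used in Equation~\ref{eq:expliciterms}).

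For the levelwise statement I would build the master diagram consisting of Diagram~\ref{diag:finalisom} for the pair $(C,n)$, Diagram~\ref{diag:finalisom} for the pair $(A,n+1)$, and ``depth'' arrows joining them: from the column-$l$ term $S^{-l}T^{n+k}(\widetilde{C}_{k-l})$ to $S^{-l}T^{n+k+1}(\widetilde{A}_{k-l})=S^{-(l-1)}T^{(n+1)+(k-1)}(\widetilde{A}_{(k-1)-(l-1)})$ one puts a signed satellite connecting homomorphism of $0\to\widetilde{A}_{k-l}\to\widetilde{B}_{k-l}\to\widetilde{C}_{k-l}\to0$ when $l\ge1$, and in column $0$ one puts the honest connecting homomorphism $\delta^{n+k}$ of $0\to\widetilde{A}_k\to\widetilde{B}_k\to\widetilde{C}_k\to0$. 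The interior cells --- those involving only satellite connecting homomorphisms together with the structural isomorphisms $\varepsilon^{-l}$ ($l\ge2$) and the kernel monomorphisms $\varepsilon^{-1}$ appearing in Diagram~\ref{diag:finalisom} --- commute by the naturality of satellite connecting homomorphisms together with the triangle identities of Diagrams~\ref{diag:twoconnmorphisms}, \ref{diag:takesatellitetrsfs} and~\ref{diag:commtriangles}. The cells straddling two consecutive columns compare two iterated connecting homomorphisms built from the $3\times3$ diagram~\ref{diag:horseshoe} and therefore \emph{anticommute}: this is exactly the generalisation of \cite[Proposition~III.4.1]{car56} that produced Diagram~\ref{diag:anticomm}, now applied at the satellite level. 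Propagating these signs across the $k$ columns produces precisely the factor $(-1)^k$ of Definition~\ref{defn:resolconnmorph}, and reading off the two boundary paths of the master diagram --- the top row of the $C$-sheet followed by the column-$0$ depth arrow, versus the column-$k$ depth arrow $\varepsilon^{-k}$ followed by a row of the $A$-sheet and one transition --- yields the desired levelwise identity. Passing to the colimit, using the exactness of countable direct limits in $\mathcal{D}$, gives the commuting square.

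The main obstacle will be the sign bookkeeping: one has to decorate the interior depth arrows with signs so that every interior cell genuinely commutes while the column-$0$ arrow ends up carrying the sign $(-1)^k$ in level $k$, and then verify that the signs accumulated from the anticommuting cells work out exactly this way. A secondary point needing care is the index mismatch between the two colimit presentations of a connecting homomorphism: it has to be absorbed by a transition map without breaking the naturality required to pass to the colimit, which forces the two cofinal reindexings to be aligned precisely. Once the master diagram is seen to commute up to these signs, the remainder is formal.
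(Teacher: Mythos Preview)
Your proposal is correct and follows essentially the same approach as the paper: the paper's proof builds exactly the ``master diagram'' you describe (Diagram~\ref{diag:largestdiag}), with Diagram~\ref{diag:finalisom} for $(C,n)$ as the front sheet, for $(A,n+1)$ as the back sheet, and depth arrows given by signed satellite connecting homomorphisms, verifying commutativity via the triangle identities and the anticommutation of \cite[Proposition~III.4.1]{car56}. The index mismatch you flag as a ``secondary point needing care'' is handled in the paper by inserting an extra rightmost column on the back sheet whose induced self-map $\chi$ of $T_{Res}^{n+1}(A)$ is shown to be the identity (Diagrams~\ref{diag:resultsinidentity} and~\ref{diag:gettodirectlimit}); this is precisely your absorption by one further transition map, so your sketch matches the paper closely.
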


\begin{proof}
The proof is based on the diagram found on the next page. First, we show that it commutes. The front side is a copy of Diagram~\ref{diag:finalisom} as well as the back side up to its commuting right most column. The right hand side of the diagram gives rise to $\widetilde{\delta}^n: T_{Res}^n(C) \rightarrow T_{Res}^{n+1}(A)$ in the direct limit and the top and left hand side give rise to $\widehat{\delta}^n: \widehat{T}^n(C) \rightarrow \widehat{T}^{n+1}(A)$. In particular, all squares and triangles from these sides commute. Using the cohomological functors defined in Equation~\ref{eq:directsyscohomfunctor}, we see that the horizontal squares commute by~\cite[Proposition~III.4.1]{car56}. As in Diagram~\ref{diag:finalisom} we can insert commuting triangles of the form
\begin{center}
\begin{tikzcd}
  {} & S^{-l}T^{n+k}(\widetilde{M}_{k-l}) \arrow[d, "\Sigma^{-l}\delta^{n+k}"] \arrow[dl, "S^{-l}\underline{\delta}^{n+k}" near start] \\
  S^{-l-1}T^{n+k+1}(\widetilde{M}_{k-l}) \arrow[r, "\varepsilon^{-l-1}"]
  & S^{-l}T^{n+k+1}(\widetilde{M}_{k-l+1})
\end{tikzcd}
\end{center}
into the front and back side. We conclude by this and Diagram~\ref{diag:takesatellitetrsfs} that the vertical squares running from back to front also commute. In particular, Diagram~\ref{diag:largestdiag} commutes. \\

To complete the proof, we note that the right most column on the back side yields the identity morphism in the direct limit. Thus, the entire back side yields the morphism $\omega_{n+1}: \widetilde{T}^{n+1}(A) \rightarrow T_{Res}^{n+1}(A)$ while the front side yields $\omega_n: \widetilde{T}^n(A) \rightarrow T_{Res}^n(A)$. As mentioned before, the right hand side yields $\widetilde{\delta}^n: T_{Res}^n(C) \rightarrow T_{Res}^{n+1}(A)$ and left hand side $\widehat{\delta}^n: \widehat{T}^n(C) \rightarrow \widehat{T}^{n+1}(A)$. Therefore, the square in the statement of the lemma commutes.
\end{proof}

By Lemma~\ref{lem:omeganatural} and Lemma~\ref{lem:commwithconn}, we conclude that the resolution construction $(T_{Res}^{\bullet}, \widetilde{\delta}^{\bullet})$ with the formula for induced morphisms from Definition~\ref{defn:resolindmorph} and for connecting homomorphisms from Definition~\ref{defn:resolconnmorph} forms a Mislin completion of $(T^{\bullet}, \delta^{\bullet})$.

\begin{thm}\label{thm:resolsatellites}
The resolution construction $(T_{Res}^{\bullet}, \widetilde{\delta}^{\bullet})$ forms a Mislin completion of $(T^{\bullet}, \delta^{\bullet})$. More specifically, $(T_{Res}^{\bullet}, \widetilde{\delta}^{\bullet})$ forms a cohomological functor and $\omega_{\bullet}: (\widehat{T}^{\bullet}, \widehat{\delta}^{\bullet}) \rightarrow (T_{Res}^{\bullet}, \widetilde{\delta}^{\bullet})$ is an isomorphism of cohomological functors.
\end{thm}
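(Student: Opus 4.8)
The plan is to transport the cohomological-functor structure from $(\widehat{T}^{\bullet}, \widehat{\delta}^{\bullet})$ to $(T_{Res}^{\bullet}, \widetilde{\delta}^{\bullet})$ along the natural isomorphism $\omega_{\bullet}$, since essentially all the work has already been carried out in Lemma~\ref{lem:resolisoms}, Proposition~\ref{prop:omeganatural} and Lemma~\ref{lem:commwithconn}. First I would collect what is available: by Proposition~\ref{prop:omeganatural} each $T_{Res}^n$ is an additive functor $\mathcal{C} \rightarrow \mathcal{D}$ and $\omega_n \colon \widehat{T}^n \rightarrow T_{Res}^n$ is a natural isomorphism, and by Lemma~\ref{lem:commwithconn} one has $\widetilde{\delta}^n \circ \omega_n(C) = \omega_{n+1}(A) \circ \widehat{\delta}^n$ for every short exact sequence $0 \rightarrow A \rightarrow B \rightarrow C \rightarrow 0$ in $\mathcal{C}$ and every $n \in \mathbb{Z}$, that is, $\widetilde{\delta}^n = \omega_{n+1}(A) \circ \widehat{\delta}^n \circ \omega_n(C)^{-1}$. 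In particular the right-hand side is manifestly independent of the Horseshoe Lemma diagram~\ref{diag:horseshoe} chosen in Definition~\ref{defn:resolconnmorph}, so $\widetilde{\delta}^{\bullet}$ is an honestly defined family of morphisms before any of its properties are invoked.

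Next I would verify Axioms~\ref{axm:delta} and~\ref{axm:les} for $(T_{Res}^{\bullet}, \widetilde{\delta}^{\bullet})$. For Axiom~\ref{axm:delta}, given a morphism of short exact sequences with components $f \colon A \rightarrow A'$ and $g \colon C \rightarrow C'$, the corresponding square for $\widehat{\delta}^{\bullet}$ commutes because $(\widehat{T}^{\bullet}, \widehat{\delta}^{\bullet})$ is a cohomological functor (Lemma~\ref{lem:satellitemislin}); composing on each side with the relevant isomorphisms $\omega_n(-)$ and using both their naturality and the identity $\widetilde{\delta}^n = \omega_{n+1} \circ \widehat{\delta}^n \circ \omega_n^{-1}$ converts it into the required commuting square for $\widetilde{\delta}^{\bullet}$. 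For Axiom~\ref{axm:les}, I would start from the long exact sequence of $(\widehat{T}^{\bullet}, \widehat{\delta}^{\bullet})$ attached to $0 \rightarrow A \xrightarrow{\iota} B \xrightarrow{\pi} C \rightarrow 0$ and apply the vertical isomorphisms $\omega_n(A)$, $\omega_n(B)$, $\omega_n(C)$ degreewise. Naturality of $\omega_{\bullet}$ identifies $T_{Res}^n(\iota)$ and $T_{Res}^n(\pi)$ with the $\omega$-conjugates of $\widehat{T}^n(\iota)$ and $\widehat{T}^n(\pi)$, and the identity for $\widetilde{\delta}^{\bullet}$ does the same for the connecting maps, so the sequence $\cdots \rightarrow T_{Res}^n(A) \rightarrow T_{Res}^n(B) \rightarrow T_{Res}^n(C) \xrightarrow{\widetilde{\delta}^n} T_{Res}^{n+1}(A) \rightarrow \cdots$ is isomorphic, as a sequence in $\mathcal{D}$, to an exact one, hence exact. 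This establishes that $(T_{Res}^{\bullet}, \widetilde{\delta}^{\bullet})$ is a cohomological functor.

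It remains to see that $\omega_{\bullet}$ is an isomorphism of cohomological functors. Axiom~\ref{axm:morphisms} for $\omega_{\bullet}$ is precisely the commuting square supplied by Lemma~\ref{lem:commwithconn}, and each component $\omega_n(M)$ is an isomorphism by Lemma~\ref{lem:resolisoms}; the componentwise inverses then automatically form a natural transformation $T_{Res}^{\bullet} \rightarrow \widehat{T}^{\bullet}$ satisfying Axiom~\ref{axm:morphisms} (conjugate the square of Lemma~\ref{lem:commwithconn}), so it is a morphism of cohomological functors inverse to $\omega_{\bullet}$. Since $(\widehat{T}^{\bullet}, \widehat{\delta}^{\bullet})$ is a Mislin completion of $(T^{\bullet}, \delta^{\bullet})$ by Lemma~\ref{lem:satellitemislin}, one concludes along the way that $(T_{Res}^{\bullet}, \widetilde{\delta}^{\bullet})$ is one too.

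There is no genuinely new obstacle in this statement: it is a formal harvest of the preceding lemmas. The only point requiring care is the bookkeeping noted above — the identity $\widetilde{\delta}^n = \omega_{n+1} \circ \widehat{\delta}^n \circ \omega_n^{-1}$ from Lemma~\ref{lem:commwithconn} must be invoked \emph{before} one tries to show $\widetilde{\delta}^{\bullet}$ is natural or fits into long exact sequences, since a priori $\widetilde{\delta}^{\bullet}$ is defined only through a particular Horseshoe resolution. All the real labour — in particular the commutativity of the giant Diagram~\ref{diag:largestdiag} — was done inside Lemma~\ref{lem:commwithconn}.
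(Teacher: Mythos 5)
Your proposal is correct and follows essentially the same route as the paper: it transports the cohomological-functor structure along the natural isomorphism $\omega_{\bullet}$, with Lemma~\ref{lem:resolisoms}, Proposition~\ref{prop:omeganatural} and Lemma~\ref{lem:commwithconn} supplying all the substance. The only stylistic differences are that the paper phrases the exactness argument via the Five Lemma (where you more directly observe that isomorphisms preserve exactness), and that you add the worthwhile remark — not made explicit in the paper — that the identity $\widetilde{\delta}^n = \omega_{n+1} \circ \widehat{\delta}^n \circ \omega_n^{-1}$ shows $\widetilde{\delta}^{\bullet}$ is independent of the Horseshoe data before any of its axioms are verified.
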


\subsection{Countably many distinct constructions of Mislin completions}

In the remainder of this section, we demonstrate that there are countably many distinct constructions of Mislin completions. In particular, we shall clarify what we mean by two constructions of Mislin completions being distinct. First, we present a collection of constructions whose terms we define as follows.

\begin{samepage}
{\vspace{-15mm}
\begin{flushleft}
\includegraphics[height = 245mm]{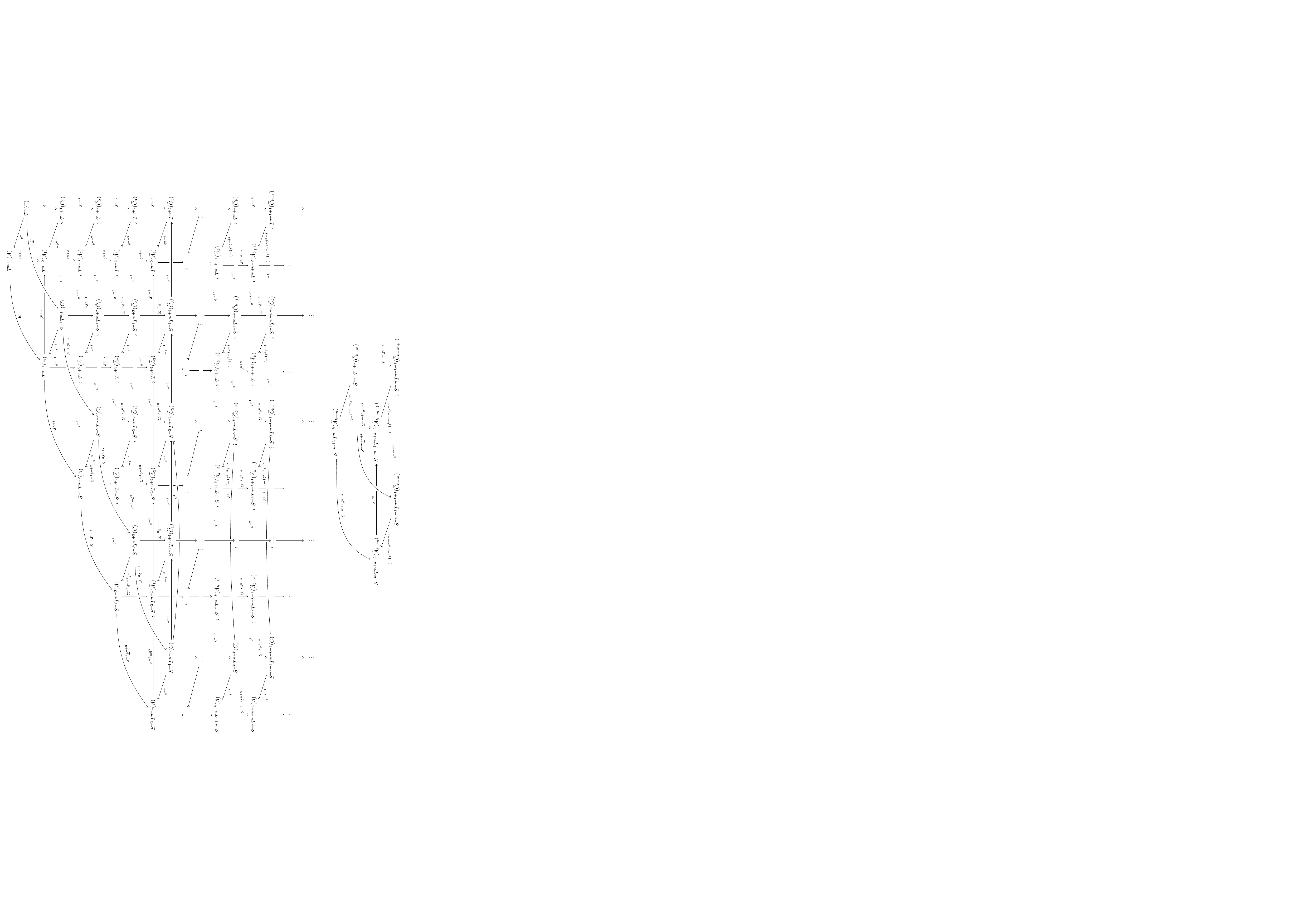}
\end{flushleft} }
{\vspace{-150mm}
\begin{equation}\label{diag:largestdiag}
{} \quad {}
\end{equation}
${} \quad {}$ \vspace{150mm}}
\end{samepage}

\begin{defn}
Let $a = (a_k)_{k \in \mathbb{N}} \in \lbrace 0, 1 \rbrace^{\mathbb{N}}$ be a sequence taking values in $0$ and $1$. Define
\[ (P(a)_k)_{k \in \mathbb{N}_0} := \begin{cases} 0 &\text{if } k = 0 \\ \sum_{i = 1}^k a_i &\text{if } k \in \mathbb{N} \end{cases} \quad \text{and} \qquad (D(a)_k)_{k \in \mathbb{N}_0} := \begin{cases} 0 &\text{if } k = 0 \\ \sum_{i = 1}^k (1-a_i) &\text{if } k \in \mathbb{N} \end{cases} \]
For any $M \in \mathrm{obj}(\mathcal{C})$ and $n \in \mathbb{Z}$ form the direct system $\big(S^{-P(a)_k}T^{n+k}(\widetilde{M}_{D(a)_k}), \delta_a^{n+k} \big)_{k \in \mathbb{N}_0}$ where
\[ \delta_a^{n+k} := \begin{cases} {\scriptstyle \Sigma^{-P(a)_k}\delta^{n+k}: \: S^{-P(a)_k}T^{n+k}(\widetilde{M}_{D(a)_k}) \rightarrow S^{-P(a)_k}T^{n+k+1}(\widetilde{M}_{D(a)_k+1})} &{\scriptstyle \text{if } P(a)_{k+1} = P(a)_k} \\ {\scriptstyle S^{-P(a)_k}\underline{\delta}^{n+k}: \: S^{-P(a)_k}T^{n+k}(\widetilde{M}_{D(a)_k}) \rightarrow S^{-P(a)_k-1}T^{n+k+1}(\widetilde{M}_{D(a)_k})} &{\scriptstyle \text{if } P(a)_{k+1} = P(a)_k+1} \end{cases} \]
with the morphisms $\Sigma^{-P(a)_k}\delta^{n+k}$ taken as in Equation~\ref{eq:sigmafunctors}. Write
\[ T_a^n(M) := \varinjlim_{\mathcal{D}, k \in \mathbb{N}_0} S^{-P(a)_k}T^{n+k}(\widetilde{M}_{D(a)_k}) \]
for the $n^{\text{th}}$ term of the construction $T_a^{\bullet}$.
\end{defn}

As a first step in showing that these constructions form Mislin completions, we prove

\begin{lem}\label{lem:oneconstrofcompletion}
There exists an isomorphism $\omega_{a, n}: T_a^n(M) \rightarrow T_{Res}^n(M)$ for any $M \in \mathrm{obj}(\mathcal{C})$.
\end{lem}

\begin{proof}
For any $k \in \mathbb{N}_0$ we define the morphism
\[ \widehat{o}_a^k := \begin{cases} \varepsilon^{-1} \circ {} \dots {} \circ \varepsilon^{-P(a)_k}: S^{-P(a)_k}T^{n+k}(\widetilde{M}_{D(a)_k}) \rightarrow T^{n+k}(\widetilde{M}_k) &\text{if } P(a)_k \geq 1 \\ \qquad \qquad \qquad \quad \mathrm{id}: T^{n+k}(\widetilde{M}_k) \rightarrow T^{n+k}(\widetilde{M}_k) &\text{if } P(a)_k = 0 \end{cases} \]
similarly to the morphism $o^k$ from Equation~\ref{eq:themapsok}. By Diagram~\ref{diag:finalisom} we infer that the square
\begin{center}
\begin{tikzcd}
    S^{-P(a)_k}T^{n+k}(\widetilde{M}_{D(a)_k}) \arrow[r, "\widehat{o}_a^k"] \arrow[d, "\delta_a^{n+k}"] & T^{n+k}(\widetilde{M}_k) \arrow[d, "\delta^{n+k}"] \\
    S^{-P(a)_{k+1}}T^{n+k+1}(\widetilde{M}_{D(a)_{k+1}}) \arrow[r, "\widehat{o}_a^{k+1}"] & T^{n+k+1}(\widetilde{M}_{k+1})
\end{tikzcd}
\end{center}
commutes. We conclude as in the proof of Lemma~\ref{lem:resolisoms} that $\omega_{a, n} := \varinjlim_{k \in \mathbb{N}_0} \widehat{o}_a^k$ is an isomorphism.
\end{proof}

We provide explicit formulae for the induced morphisms of the constructions $T_a^{\bullet}$.

\begin{defn}\label{defn:countmanyconstrsindmorph}
Let $f: M \rightarrow N$ be a morphism in $\mathcal{C}$ and $(\widetilde{f}_k: \widetilde{M}_k \rightarrow \widetilde{N}_k)_{k \in \mathbb{N}_0}$ be a sequence of morphism as in Definition~\ref{defn:resolindmorph}. Then the direct limit of the commutative diagrams
\begin{center}
\begin{tikzcd}
    S^{-P(a)_k}T^{n+k}(\widetilde{M}_{D(a)_k}) \arrow[rrrr, "S^{-P(a)_k}T^{n+k}(\widetilde{f}_{D(a)_k})"] \arrow[d, "\delta_a^{n+k}"] & & & & S^{-P(a)_k}T^{n+k}(\widetilde{N}_{D(a)_k}) \arrow[d, "\delta_a^{n+k}"] \\
    S^{-P(a)_{k+1}}T^{n+k+1}(\widetilde{M}_{D(a)_{k+1}}) \arrow[rrrr, "S^{-P(a)_{k+1}}T^{n+k+1}(\widetilde{f}_{D(a)_{k+1}})"] & & & & S^{P(a)_{k+1}}T^{n+k+1}(\widetilde{N}_{D(a)_{k+1}})
\end{tikzcd}
\end{center}
written as $T_a^n(f) := \varinjlim_{k \in \mathbb{N}_0} S^{-P(a)_k}T^{n+k}(\widetilde{f}_{D(a)_k})$ is the induced morphism in the construction $T_a^{\bullet}$.
\end{defn}

We can reiterate the proof of Lemma~\ref{lem:omeganatural} to demonstrate

\begin{lem}\label{lem:omegaanatural}
The homomorphisms $\omega_{a, n}: \widehat{T}^n \rightarrow T_{Res}^n$ form a natural isomorphism. In other words, the square
\begin{center}
\begin{tikzcd}
    T_a^n(M) \arrow[rr, "\omega_{a, n}(M)"] \arrow[d, "T_a^n(f)"] & & T_{Res}^n(M) \arrow[d, "T_{Res}^n(f)"] \\
    T_a^n(N) \arrow[rr, "\omega_{a, n}(N)"] & & T_{Res}^n(N)
\end{tikzcd}
\end{center}
commutes for any morphism $f: M \rightarrow N$ in $\mathcal{C}$.
\end{lem}

We provide explicit formulae for the connecting homomorphisms of the constructions $T_a^{\bullet}$.

\begin{lem}\label{lem:countmanyconstrsconnmorph}
Let $0 \rightarrow A \rightarrow B \rightarrow C \rightarrow 0$ be a short exact sequence in $\mathcal{C}$. Define for any $k \in \mathbb{N}_0$ the morphism
\begin{align*}
\delta_{a, k}^{n+k} &:= (-1)^{-P(a)_k-1} \varepsilon^{-P(a)_k-1} \circ S^{-P(a)_k}\underline{\delta}^{n+k}: \\
&S^{-P(a)_k}T^{n+k}(\widetilde{C}_{D(a)_k}) \rightarrow S^{-P(a)_k-1}T^{n+k+1}(\widetilde{C}_{D(a)_k}) \rightarrow S^{-P(a)_k}T^{(n+1)+k}(\widetilde{A}_{D(a)_k}) \, .
\end{align*}
Then the diagram
\begin{equation}\label{diag:levelconnmorphone}
\begin{tikzcd}
    S^{-P(a)_k}T^{n+k}(\widetilde{C}_{D(a)_k}) \arrow[r, "\delta_{a, k}^{n+k}"] \arrow[d, "\delta_a^{n+k}"] & S^{-P(a)_k}T^{n+k+1}(\widetilde{A}_{D(a)_k}) \arrow[d, "\delta_a^{n+k+1}"] \\
    S^{-P(a)_{k+1}}T^{n+k+1}(\widetilde{A}_{D(a)_{k+1}}) \arrow[r, "\delta_{a, k+1}^{n+k+1}"] & S^{-P(a)_{k+1}}T^{n+k+2}(\widetilde{A}_{D(a)_{k+1}})
\end{tikzcd}
\end{equation}
commutes for every $k \in \mathbb{N}_0$. The direct limit $\widetilde{\delta}_a^n := \varinjlim_{k \in \mathbb{N}_0} \delta_{a, k}^{n+k}: T_a^n(C) \rightarrow T_a^{n+1}(A)$ resulting from these diagrams is the connecting homomorphism in the construction $T_a^{\bullet}$.
\end{lem}

\begin{proof}
We need to prove that Diagram~\ref{diag:levelconnmorphone} commutes. On the one hand, the morphism
\[ \delta_{a, k}^{n+k}: S^{-P(a)_k}T^{n+k}(\widetilde{C}_{D(a)_k}) \rightarrow S^{-P(a)_k}T^{(n+1)+k}(\widetilde{A}_{D(a)_k}) \]
takes us first down the front side of Diagram~\ref{diag:largestdiag} and then over to its back side. On the other hand, the morphism
\[ \delta_a^{n+k}: S^{-P(a)_k}T^{n+k}(\widetilde{C}_{D(a)_k}) \rightarrow S^{-P(a)_{k+1}}T^{n+k+1}(\widetilde{A}_{D(a)_{k+1}}) \]
only takes us down in Diagram~\ref{diag:largestdiag}. The desired diagram commutes because Diagram~\ref{diag:largestdiag} does so too.
\end{proof}

The following provides an analogue of Lemma~\ref{lem:commwithconn} for the constructions $(T_a^{\bullet}, \widetilde{\delta}_a^{\bullet})$.

\begin{lem}\label{lem:commwithconna}
For any short exact sequence $0 \rightarrow A \rightarrow B \rightarrow C \rightarrow 0$ in $\mathcal{C}$ the diagrams
\begin{equation}\label{diag:levelconnmorphtwo}
\begin{tikzcd}
    S^{-P(a)_k}T^{n+k}(\widetilde{C}_{D(a)_k}) \arrow[r, "\widehat{o}_a^k"] \arrow[d, "\delta_a^{n+k}"] & T^{n+k}(\widetilde{C}_k) \arrow[d, "\delta^{n+k}"] \\
    S^{-P(a)_k}T^{(n+1)+k}(\widetilde{A}_{D(a)_k}) \arrow[r, "\widehat{o}_a^{k+1}"] & T^{(n+1)+k}(\widetilde{A}_k)
\end{tikzcd}
\end{equation}
commutes for any $k \in \mathbb{N}_0$. Therefore, the resulting diagram in the direct limit 
\begin{equation}\label{diag:limitconnmorph}
\begin{tikzcd}[row sep = small]
  T_a^n(C) \arrow[r, "\omega_{a, n}"] \arrow[d, "\widetilde{\delta}_a^n"]
  & T_{Res}^n(C) \arrow[d, "\widetilde{\delta}^n"] \\
  T_a^{n+1}(A) \arrow[r, "\omega_{a, n+1}"]
  & T_{Res}^{n+1}(A)
\end{tikzcd}
\end{equation}
also commutes.
\end{lem}

\begin{proof}
The proof is based on tracing the appropriate morphisms in Diagram~\ref{diag:largestdiag}. The term $S^{-P(a)_k}T^{n+k}(\widetilde{C}_{D(a)_k})$ found in Diagram~\ref{diag:levelconnmorphtwo} lies in the front side of Diagram~\ref{diag:largestdiag}. On the one hand, we reach the very right hand side of the latter diagram through the morphism $\widehat{o}_a^k: S^{-P(a)_k}T^{n+k}(\widetilde{C}_{D(a)_k}) \rightarrow T^{n+k}(\widetilde{C}_k)$. The morphism $\delta^{n+k}: T^{n+k}(\widetilde{C}_k) \rightarrow T^{(n+1)+k}(\widetilde{A}_k)$ leads us to the back side. We move by $\mathrm{id}: T^{(n+1)+k}(\widetilde{A}_k) \rightarrow T^{(n+1)+k}(\widetilde{A}_k)$ across a square in the very right hand column of the back side. On the other had, the morphism
\[ \delta_{a, k}^{n+k}: S^{-P(a)_k}T^{n+k}(\widetilde{C}_{D(a)_k}) \rightarrow S^{-P(a)_k}T^{(n+1)+k}(\widetilde{A}_{D(a)_k}) \]
takes us first down the front side of Diagram~\ref{diag:largestdiag} and then over to the back side. Lastly, we reach the left hand side of the right most column of the back side through the morphism $\widehat{o}_a^{k+1}: S^{-P(a)_k}T^{(n+1)+k}(\widetilde{A}_{D(a)_k}) \rightarrow T^{(n+1)+k}(\widetilde{A}_k)$. Since Diagram~\ref{diag:largestdiag} commutes, we conclude that Diagram~\ref{diag:levelconnmorphtwo} also commutes.
\end{proof}

By Lemma~\ref{lem:omegaanatural} and Lemma~\ref{lem:commwithconna}, we conclude that the constructions $(T_a^{\bullet}, \widetilde{\delta}_a^{\bullet})$ with the formula for induced morphisms from Definition~\ref{defn:countmanyconstrsindmorph} and for connecting homomorphisms from Lemma~\ref{lem:countmanyconstrsconnmorph} form Mislin completions of $(T^{\bullet}, \delta^{\bullet})$.

\begin{lem}
The constructions $(T_a^{\bullet}, \widetilde{\delta}_a^{\bullet})$ forms Mislin completions of $(T^{\bullet}, \delta^{\bullet})$. More specifically, $(T_a^{\bullet}, \widetilde{\delta}_a^{\bullet})$  form cohomological functors and $\omega_{a, \bullet}: (T_a^{\bullet}, \widetilde{\delta}_a^{\bullet}) \rightarrow (T_{Res}^{\bullet}, \widetilde{\delta}^{\bullet})$ are isomorphisms of cohomological functors.
\end{lem}

The following proposition shows that the satellite functor construction and the resolution construction can be seen as special cases of the Mislin completions $(T_a^{\bullet}, \widetilde{\delta}_a^{\bullet})$.

\begin{prop}
Let $e \in \lbrace 0, 1 \rbrace^{\mathbb{N}}$ denote the sequence all whose terms are set to $0$ and $f \in \lbrace 0, 1 \rbrace^{\mathbb{N}}$ the sequence all whose terms are set to $1$. Then $(\widehat{T}^{\bullet}, \widehat{\delta}^{\bullet}) = (T_f^{\bullet}, \widetilde{\delta}_f^{\bullet})$ and $(T_{Res}^{\bullet}, \widetilde{\delta}^{\bullet}) = (T_e^{\bullet}, \widetilde{\delta}_e^{\bullet})$.
\end{prop}

\begin{proof}
By definition, $\widehat{T}^n = T_f^n$ and $T_{Res}^n = T_e^n$ for every $n \in \mathbb{Z}$. It can be deduced from Diagram~\ref{diag:largestdiag} that $\widetilde{\delta}_f^n = \widehat{\delta}^n$
and that $\widetilde{\delta}_e^n = \widetilde{\delta}^n$.
\end{proof}

\begin{thm}\label{thm:countmanyconstrs}
For $a, b \in \lbrace 0, 1 \rbrace^{\mathbb{N}}$ the constructions of the Mislin completions $(T_a^{\bullet}, \widetilde{\delta}_a^{\bullet})$ and $(T_b^{\bullet}, \widetilde{\delta}_b^{\bullet})$ are said to agree if there is an infinite (cofinal) subset $\Lambda \subseteq \mathbb{N}$ such that $S^{-P(a)_{\lambda}}T^{n+{\lambda}}(\widetilde{M}_{D(a)_{\lambda}}) = S^{-P(b)_{\lambda}}T^{n+{\lambda}}(\widetilde{M}_{D(b)_{\lambda}})$ for every $\lambda \in \Lambda$, $n \in \mathbb{Z}$ and $M \in \mathrm{obj}(\mathcal{C})$. The basic idea is that the constructions agree whenever their terms
\[ T_a^n(M) = \varinjlim_{k \in \mathbb{N}_0} S^{-P(a)_k}T^{n+k}(\widetilde{M}_{D(a)_k}) \text{ and } T_b^n(M) = \varinjlim_{k \in \mathbb{N}_0} S^{-P(b)_k}T^{n+k}(\widetilde{M}_{D(b)_k}) \]
are isomorphic in a straightforward manner. If this is not the case, the constructions $(T_a^{\bullet}, \widetilde{\delta}_a^{\bullet})$ and $(T_b^{\bullet}, \widetilde{\delta}_b^{\bullet})$ are said to be distinct. Then there are countably many distinct constructions of Mislin completions of the form $(T_a^{\bullet}, \widetilde{\delta}_a^{\bullet})$.
\end{thm}

\begin{proof}
Define the binary relation `$\sim$' on $\lbrace 0, 1 \rbrace^{\mathbb{N}}$ by setting $a \sim b$ if for every $k \in \mathbb{N}$ there is $K \geq k$ such that $P(a)_K = P(b)_K$. If $\sim'$ denotes the transitive closure of $\sim$, then the proof reduces to showing that there are countably many elements in $\lbrace 0, 1 \rbrace^{\mathbb{N}}/\sim'$. Hence, define for any $m \in \mathbb{N}$ the sequences $e^m, f^m \in \lbrace 0, 1 \rbrace^{\mathbb{N}}$ by
\[ e_k^m := \begin{cases} 1 &\text{if } k < m \\ 0 &\text{if } k \geq m \end{cases} \qquad \text{and} \qquad f_k^m := \begin{cases} 0 &\text{if } k < m \\ 1 &\text{if } k \geq m \end{cases} \]
For any distinct $m, p \in \mathbb{N}$ we note that $e_m \nsim' e_p$, $e_m \nsim' f_p$ and $f_m \nsim' f_p$. In particular, there are at least countably many elements in $\lbrace 0, 1 \rbrace^{\mathbb{N}}/\sim'$. If $a \in \lbrace 0, 1 \rbrace^{\mathbb{N}}$ takes either the value $0$ or the value $1$ finitely many times, then there is $m \in \mathbb{N}$ such that $a \sim e_m$ or $a \sim f_m$. If $b, c \in \lbrace 0, 1 \rbrace^{\mathbb{N}}$ each take the values $0$ and $1$ infinitely often, one can construct $d \in \lbrace 0, 1 \rbrace^{\mathbb{N}}$ such that $b \sim d$ and $c \sim d$. Therefore, $\lbrace 0, 1 \rbrace^{\mathbb{N}}/\sim'$ is countable.
\end{proof}

\begin{notn}
In the rest of the paper, we write $(\widehat{T}^{\bullet}, \widehat{\delta}^{\bullet})$ for the Mislin completion of a cohomological functor $(T^{\bullet}, \delta^{\bullet})$ independently from which construction it arises. When required, we specify the construction or use a special notation for it.
\end{notn}

\section{The naïve and the resolution construction}\label{sec:naiveconstruction}

In this section we develop explicit formulae for the induced homomorphisms and connecting homomorphisms of the naïve construction. By virtue of these formulae, we prove that the naïve construction defines Mislin completions for Ext-functors by establishing an isomorphism of cohomological functor to the resolution construction. As already mentioned at the end of Section~\ref{sec:constructions}, the naïve constructions does not define Mislin completions for other cohomological functors.

\subsection{Construction and termwise isomorphisms}

We first perform the naïve construction rigorously and explicitly. Both A.\! Beligiannis and I.\! Reiten in~\cite[p.~37--38/163--165]{bel07} as well as S.\! Guo and L.\! Liang in~\cite[p.~14--15]{guo23} have established relative homological versions of the naïve construction in great generality. In contrast, our construction represents a faithful (absolute homological) generalisation of D.\! J.\! Benson and J.\! F.\! Carlson's original construction found in~\cite[p.~109]{ben92}. The main reason for this is because such a faithful generalisation allows us to develop explicit formulae for the induced homomorphisms and connecting homomorphisms as a Mislin completion. Moreover, this allows us to prove in~\cite{ghe24} that the complete cohomology group $\widehat{H}^0(G, -)$ detects finite cohomological dimension and that Yoneda products exist for completed Ext-functors $\widehat{\mathrm{Ext}}_{\mathcal{C}}^{\bullet}(-, -)$. \\

We define a bifunctor
\[ [-, -]_{\mathcal{C}}: \mathcal{C}^{\mathrm{op}} \times \mathcal{C} \rightarrow \mathbf{Ab} \]
that we have encountered in Subsection~\ref{sec:constructions}. Let $\mathcal{P}_{\mathcal{C}}(A, B) \subseteq \mathrm{Hom}_{\mathcal{C}}(M, N)$ be the subgroup of morphisms factoring through a projective. Define $[A, B]_{\mathcal{C}} := \mathrm{Hom}_{\mathcal{C}}(A, B) / \mathcal{P}_{\mathcal{C}}(A, B)$. It is straightforward to prove the following proposition that is relevant to the construction of Yoneda products in~\cite[Theorem~6.6]{ghe24}.

\begin{prop}\label{prop:biadditive}
The bifunctor $\circ: \mathrm{Hom}_{\mathcal{C}}(B, C) \times \mathrm{Hom}_{\mathcal{C}}(A, B) \rightarrow \mathrm{Hom}_{\mathcal{C}}(A, C)$ descends to a bifunctor $\circ: [B, C]_{\mathcal{C}} \times [A, B]_{\mathcal{C}} \rightarrow [A, C]_{\mathcal{C}}$ that is associative and additive in both variables.
\end{prop}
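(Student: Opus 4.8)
The plan is to show first that composition is well-defined on the quotients, i.e. that it descends, and then that the resulting operation inherits associativity and biadditivity directly from the Hom-bifunctor. The only real content is the descent; associativity and additivity are essentially immediate once descent is established.

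\textbf{Step 1: Composition respects the projective ideal.} The heart of the matter is the following claim: if $\varphi: B \to C$ factors through a projective, then so does $\varphi \circ \chi$ for any $\chi: A \to B$, and dually, if $\chi: A \to B$ factors through a projective, then so does $\psi \circ \chi$ for any $\psi: B \to C$. Both are immediate: if $\varphi = \varphi_2 \circ \varphi_1$ with $\varphi_1: B \to P$, $\varphi_2: P \to C$ and $P$ projective, then $\varphi \circ \chi = \varphi_2 \circ (\varphi_1 \circ \chi)$ factors through $P$; symmetrically for the other side. In other words, $\mathcal{P}_{\mathcal{C}}(-, -)$ is a two-sided ideal in the category $\mathcal{C}$. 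Consequently, if $\psi - \psi' \in \mathcal{P}_{\mathcal{C}}(B, C)$ and $\chi - \chi' \in \mathcal{P}_{\mathcal{C}}(A, B)$, then
\[ \psi \circ \chi - \psi' \circ \chi' = \psi \circ (\chi - \chi') + (\psi - \psi') \circ \chi' \in \mathcal{P}_{\mathcal{C}}(A, C) \, , \]
using that $\mathcal{P}_{\mathcal{C}}(A, C)$ is a subgroup (established just before the proposition). This shows the composite $[\psi]_{\mathcal{C}} \circ [\chi]_{\mathcal{C}} := [\psi \circ \chi]_{\mathcal{C}}$ is well-defined, independently of representatives.

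\textbf{Step 2: Functoriality, associativity and additivity.} That the assignment is a bifunctor amounts to checking it respects identities and the two ``partial composition'' functorialities; these follow because the quotient maps $\mathrm{Hom}_{\mathcal{C}}(A, B) \twoheadrightarrow [A, B]_{\mathcal{C}}$ are the components of a (full) functor and composition of morphism-sets was already functorial in $\mathcal{C}$. Associativity $([\vartheta]_{\mathcal{C}} \circ [\psi]_{\mathcal{C}}) \circ [\chi]_{\mathcal{C}} = [\vartheta]_{\mathcal{C}} \circ ([\psi]_{\mathcal{C}} \circ [\chi]_{\mathcal{C}})$ follows by lifting to representatives, invoking associativity of $\circ$ in $\mathcal{C}$, and projecting back down; this is legitimate precisely because Step 1 guarantees the result is representative-independent. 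For additivity in the first variable, given $\psi, \psi': B \to C$ and $\chi: A \to B$, biadditivity of $\circ$ on $\mathrm{Hom}$-groups gives $(\psi + \psi') \circ \chi = \psi \circ \chi + \psi' \circ \chi$ in $\mathrm{Hom}_{\mathcal{C}}(A, C)$, and applying the (additive) quotient homomorphism yields $[\psi + \psi']_{\mathcal{C}} \circ [\chi]_{\mathcal{C}} = [\psi]_{\mathcal{C}} \circ [\chi]_{\mathcal{C}} + [\psi']_{\mathcal{C}} \circ [\chi]_{\mathcal{C}}$; additivity in the second variable is symmetric.

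\textbf{Main obstacle.} There is no serious obstacle here; the proposition is essentially a formal consequence of the fact that morphisms factoring through projectives form a two-sided ideal closed under addition. The only point requiring a moment's care is confirming that the addition on the quotient groups $[A, B]_{\mathcal{C}}$ is genuinely compatible with composition, which is exactly the bilinearity computation in Step 2 — but since the quotient map is an abelian-group homomorphism and $\circ$ is $\mathbb{Z}$-bilinear on $\mathrm{Hom}$-groups, this is routine. I would present Steps 1 and 2 compactly, emphasising the ideal property as the conceptual core.
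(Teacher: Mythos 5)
Your proof is correct and follows essentially the same route as the paper: both reduce to the fact that $\mathcal{P}_{\mathcal{C}}(-,-)$ is a two-sided ideal under composition, which is what licenses the quotients. The only difference is presentational — the paper builds the descended map by a pair of successive quotients and merely asserts the vanishing needed to perform them, whereas you spell out that vanishing (the one-sided ideal check) and the telescoping identity $\psi \circ \chi - \psi' \circ \chi' = \psi \circ (\chi - \chi') + (\psi - \psi') \circ \chi'$ explicitly; otherwise the content is the same.
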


In order to perform the naïve construction we require the homomorphism from the following proposition.

\begin{prop}\label{prop:transitioning}
There exists a homomorphism $t_{A, B}: [A, B]_{\mathcal{C}} \rightarrow [\widetilde{A}_1, \widetilde{B}_1]_{\mathcal{C}}$ with the property that $t_{B, C}(-) \circ t_{A, B}(-) = t_{A, C}(- \circ -)$.
\end{prop}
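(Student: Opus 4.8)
The plan is to define $t_{A,B}$ by the lifting procedure already sketched in Subsection~\ref{sec:constructions}, specialised to the short exact sequences $0 \to \widetilde{A}_1 \xrightarrow{\iota_1} A_0 \xrightarrow{\pi_0} A \to 0$ and $0 \to \widetilde{B}_1 \xrightarrow{\iota_1} B_0 \xrightarrow{\pi_0} B \to 0$ coming from the fixed projective resolutions. First I would take a morphism $f : A \to B$ and, using that $A_0$ is projective and $\pi_0 : B_0 \to B$ is an epimorphism, obtain a lift $f_0 : A_0 \to B_0$ with $\pi_0 \circ f_0 = f \circ \pi_0$. Since $\iota_1 : \widetilde{B}_1 \to B_0$ is the kernel of $\pi_0$ and $\pi_0 \circ f_0 \circ \iota_1 = f \circ \pi_0 \circ \iota_1 = 0$, the morphism $f_0 \circ \iota_1$ factors uniquely through $\iota_1$, yielding $\widetilde{f}_1 : \widetilde{A}_1 \to \widetilde{B}_1$. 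I then set $\widehat{t}_{A,B}(f) := \widetilde{f}_1 + \mathcal{P}_{\mathcal{C}}(\widetilde{A}_1, \widetilde{B}_1) \in [\widetilde{A}_1, \widetilde{B}_1]_{\mathcal{C}}$.

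Next I would verify this class is independent of the chosen lift. If $f_0, f_0'$ are two lifts of $f$, then $\pi_0 \circ (f_0 - f_0') = 0$, so $f_0 - f_0' = \iota_1 \circ s$ for a unique $s : A_0 \to \widetilde{B}_1$; restricting to $\widetilde{A}_1$ and cancelling the monomorphism $\iota_1$ gives $\widetilde{f}_1 - \widetilde{f}_1' = s \circ \iota_1$, which factors through the projective object $A_0$ and hence lies in $\mathcal{P}_{\mathcal{C}}(\widetilde{A}_1, \widetilde{B}_1)$. Additivity of $\widehat{t}_{A,B}$ is immediate, as $f_0 + g_0$ is a lift of $f + g$ and passing to the induced maps on kernels is additive. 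To see $\widehat{t}_{A,B}$ descends to $[A,B]_{\mathcal{C}}$ I would check it kills $\mathcal{P}_{\mathcal{C}}(A,B)$: if $f = \beta \circ \alpha$ with $\alpha : A \to P$, $\beta : P \to B$ and $P$ projective, lift $\beta$ to $\beta_0 : P \to B_0$ and choose $f_0 := \beta_0 \circ \alpha \circ \pi_0$ as a lift of $f$; then $f_0 \circ \iota_1 = \beta_0 \circ \alpha \circ \pi_0 \circ \iota_1 = 0$, so $\widetilde{f}_1 = 0$ for this choice, and by the previous paragraph $\widehat{t}_{A,B}(f) = 0$ in $[\widetilde{A}_1, \widetilde{B}_1]_{\mathcal{C}}$. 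This produces the desired homomorphism $t_{A,B} : [A,B]_{\mathcal{C}} \to [\widetilde{A}_1, \widetilde{B}_1]_{\mathcal{C}}$.

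For the composition identity I would take $f : A \to B$ and $g : B \to C$ with lifts $f_0 : A_0 \to B_0$, $g_0 : B_0 \to C_0$ and induced maps $\widetilde{f}_1$, $\widetilde{g}_1$. Since $\pi_0 \circ (g_0 \circ f_0) = g \circ \pi_0 \circ f_0 = (g \circ f) \circ \pi_0$, the composite $g_0 \circ f_0$ is a legitimate lift of $g \circ f$; computing the induced map on $\widetilde{A}_1$ via $\iota_1 \circ (\widetilde{g \circ f})_1 = (g_0 \circ f_0) \circ \iota_1 = g_0 \circ \iota_1 \circ \widetilde{f}_1 = \iota_1 \circ \widetilde{g}_1 \circ \widetilde{f}_1$ and cancelling the monomorphism $\iota_1$ gives $(\widetilde{g \circ f})_1 = \widetilde{g}_1 \circ \widetilde{f}_1$. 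Passing to classes and invoking Proposition~\ref{prop:biadditive} for the composition bifunctor on the bracket groups yields $t_{A,C}(g \circ f) = t_{B,C}(g) \circ t_{A,B}(f)$.

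The only real subtlety, and hence the point to handle with care, is that everything above takes place in a general abelian category rather than in a module category: statements such as ``$f_0 - f_0'$ factors through $\iota_1$'' and ``cancel the monomorphism $\iota_1$'' must be justified by the universal properties of kernels and the fact that monomorphisms in an abelian category are normal, exactly as used around Diagram~\ref{diag:satellitemorph}; alternatively one may invoke the Freyd--Mitchell embedding theorem, as the paper already does near Diagram~\ref{diag:anticomm}, to reduce these to literal elementwise computations. Beyond this bookkeeping I do not expect any genuine obstacle.
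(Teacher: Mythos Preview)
Your proof is correct and follows essentially the same approach as the paper: define a map $s_{A,B} : \mathrm{Hom}_{\mathcal{C}}(A,B) \to [\widetilde{A}_1,\widetilde{B}_1]_{\mathcal{C}}$ via the lifting procedure of Diagram~\ref{diag:simplelift}, verify independence of the lift by factoring $f_0-f_0'$ through the kernel $\iota_1$, check additivity, show it kills $\mathcal{P}_{\mathcal{C}}(A,B)$, and deduce compatibility with composition by composing lifts. The one small divergence is in the step showing $s_{A,B}$ annihilates $\mathcal{P}_{\mathcal{C}}(A,B)$: the paper appeals to the already-established identity $s_{B,C}(-)\circ s_{A,B}(-)=s_{A,C}(-\circ-)$ together with a trivial resolution of the intermediate projective, whereas you construct an explicit lift $f_0=\beta_0\circ\alpha\circ\pi_0$ with $\widetilde{f}_1=0$ directly; your argument is slightly more self-contained and avoids any implicit dependence on a choice of resolution for the intermediate object.
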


\begin{proof}
Consider the following version of Diagram~\ref{diag:complexlift}
\begin{equation}\label{diag:simplelift}
\begin{tikzcd}
    0 \arrow[r] & \widetilde{A}_1 \arrow[r, "\iota_A"] \arrow[d, "\widetilde{f}_1"] & A_0 \arrow[r, "\pi_A"] \arrow[d, "f_0"] & A \arrow[r] \arrow[d, "f"] & 0 \\
    0 \arrow[r] & \widetilde{B}_1 \arrow[r, "\iota_B"] & B_0 \arrow[r, "\pi_B"] & B \arrow[r] & 0
\end{tikzcd}
\end{equation}
where $A_0$ and $B_0$ are assumed to be projective. If $f_0': A_0 \rightarrow B_0$ and $\widetilde{f}_1': \widetilde{A}_1 \rightarrow \widetilde{B}_1$ are different lifts, then there exists a morphism $e: A_0 \rightarrow \widetilde{B}_1$ such that $\widetilde{f}_1-\widetilde{f}_1' = e \circ \iota_A$. As $A_0$ is projective, $\widetilde{f}_1$ and $\widetilde{f}_1'$ agree in $[\widetilde{A}_1, \widetilde{B}_1]_{\mathcal{C}}$. Therefore, there is a well defined homomorphism
\[ s_{A, B}: \mathrm{Hom}_{\mathcal{C}}(A, B) \rightarrow [\widetilde{A}_1, \widetilde{B}_1]_{\mathcal{C}}, f \mapsto \widetilde{f_1}+\mathcal{P}_{\mathcal{C}}(\widetilde{A}_1, \widetilde{B}_1) \, . \]
If $h \in \mathrm{Hom}_{\mathcal{C}}(B, C)$, then we see that $s_{B, C}(h) \circ s_{A, B}(f) = s_{A, C}(h \circ f)$. In particular, if $f \in \mathcal{P}_{\mathcal{C}}(A, B)$, then $s_{A, B}(f)$ = 0. Thus, $s_{A, B}$ descends to the desired homomorphism $t_{A, B}: [A, B]_{\mathcal{C}} \rightarrow [\widetilde{A}_1, \widetilde{B}_1]_{\mathcal{C}}$. \\

Because $s_{B, C}(-) \circ s_{A, B}(-) = s_{A, C}(- \circ -)$, the respective maps factor through quotient homomorphisms as in the commutative diagram
\begin{center}
\begin{tikzcd}[column sep = scriptsize]
	{\scriptstyle \mathrm{Hom}_{\mathcal{C}}(B{,} \, C) \times \mathrm{Hom}_{\mathcal{C}}(A{,} \, B)} \arrow[r] \arrow[rrrr, bend right = 10, "{\scriptstyle s_{B, C} \times s_{A, B}}"] \arrow[ddd, "{\scriptstyle - \circ -}"] & {\scriptstyle \mathrm{Hom}_{\mathcal{C}}(B{,} \, C) \times {[}A{,} \, B{]}_{\mathcal{C}}} \arrow[r] \arrow[rrr, bend left = 15, "{\scriptstyle s_{B, C} \times t_{A, B}}"] & {\scriptstyle {[}B{,} \, C{]}_{\mathcal{C}} \times {[}A{,} \, B{]}_{\mathcal{C}}} \arrow[rr, "{\scriptscriptstyle t_{B, C} \times t_{A, B}}" near start] & & {\scriptstyle {[}\widetilde{B}_1{,} \, \widetilde{C}_1{]} \times {[}\widetilde{A}_1{,} \, \widetilde{B}_1{]}} \arrow[ddd, "{\scriptstyle - \circ -}"] \\ \\ \\
	{\scriptstyle \mathrm{Hom}_{\mathcal{C}}(A{,} \, C)} \arrow[rr] \arrow[rrrr, bend right = 10, "s_{A, C}"] & & {\scriptstyle {[}A{,} \, C{]}_{\mathcal{C}}} \arrow[rr, "{\scriptstyle t_{A, C}}"] & & {\scriptstyle {[}\widetilde{A}_1{,} \, \widetilde{C}_1{]}}
\end{tikzcd}
\end{center}
Its right-hand side and Proposition~\ref{prop:biadditive} imply that $t_{B, C}(-) \circ t_{A, B}(-) = t_{A, C}(- \circ -)$.
\end{proof}

\begin{defn}\label{defn:naiveconstr}
Let $A$ and $B$ be objects in $\mathcal{C}$ and denote by $(A_n)_{n \in \mathbb{N}_0}$, $(B_n)_{n \in \mathbb{N}_0}$ projective resolutions. Then for $n \in \mathbb{Z}$ we define the $n^{\text{th}}$ term of the naïve construction as
\[ BC_{\mathcal{C}}^n(A, B) := \varinjlim_{\mathbf{Ab}}([\widetilde{A}_{n+k}, \widetilde{B}_k]_{\mathcal{C}}, t_{\widetilde{A}_{n+k}, \widetilde{B}_k})_{k \in \mathbb{N}_0, n+k \geq 0} \]
where $t_{\widetilde{A}_{n+k}, \widetilde{B}_{n+k}}: [\widetilde{A}_{n+k}, \widetilde{B}_k]_{\mathcal{C}} \rightarrow [\widetilde{A}_{n+k+1}, \widetilde{B}_{k+1}]_{\mathcal{C}}$. 
\end{defn}

As a first step to prove that the naïve construction defines completed Ext-functors, we construct a homomorphism from the terms $\widehat{\mathrm{Ext}}_{\mathcal{C}}^n(A, B)$ arising from the resolution construction to the terms $BC_{\mathcal{C}}^n(A, B)$. The idea of the construction and of the proof that it yields an isomorphism can be traced back to G.\! Mislin's paper~\cite[p.~299]{mis94}. Again, this differs from S.\! Guo and L.\! Liang's account~\cite[pp.~18--19]{guo23} containing a similar construction and proof. The first step in constructing the desired homomorphism consists of the definition.

\begin{defn}\label{defn:betaprelim}
Let $A$ and $B$ be objects in $\mathcal{C}$ and denote by $(A_n)_{n \in \mathbb{N}_0}$, $(B_n)_{n \in \mathbb{N}_0}$ projective resolutions. Any morphism in the $A_{\bullet}$ can be factorised as $\partial_n: A_n \xrightarrow{\pi_n} \widetilde{A}_n \xrightarrow{\iota_n} A_{n-1}$. Then for every $f \in \mathrm{Ker}(\mathrm{Hom}_{\mathcal{C}}(\partial_{n+1}, B))$ there is a unique morphism $f': \widetilde{A}_n \rightarrow B$ such that $f = f' \circ \pi_n$. Therefore, there is an isomorphisms of abelian groups
\[ \alpha_n(B): \mathrm{Ker}(\mathrm{Hom}_{\mathcal{C}}(\partial_{n+1}, B)) \rightarrow \mathrm{Hom}_{\mathcal{C}}(\widetilde{A}_n, B),\; \; f \mapsto f' \, . \]
In the case where $n = 0$ this yields an isomorphism
\[ \alpha_0(B): \mathrm{Ext}_{\mathcal{C}}^0(A, B) \rightarrow \mathrm{Hom}_{\mathcal{C}}(A, B) \]
since $\mathrm{Ker}(\mathrm{Hom}_{\mathcal{C}}(\partial_0, B)) = \mathrm{Ext}_{\mathcal{C}}^0(A, B)$. If $n \geq 1$ and $f \in \mathrm{Im}(\mathrm{Hom}_{\mathcal{C}}(\partial_n, B))$, then $\alpha_n(B)(f) \in \mathcal{P}_R(\widetilde{A}_n, B)$. Thus $\alpha_n(B)$ descends to a homomorphism of abelian groups
\begin{align*}
\beta_n(B)\! : \, \mathrm{Ext}_{\mathcal{C}}^n(A, B) &\rightarrow [\widetilde{A}_n, B]_{\mathcal{C}} \\
f + \mathrm{Im}(\mathrm{Hom}_{\mathcal{C}}(\partial_n, B)) &\mapsto \alpha_n(B)(f)+ \mathcal{P}_R(\widetilde{A}_n, B) \, .    
\end{align*}
Note that $\beta^n(B)$ is only defined for $n \geq 1$.
\end{defn}

Since all Ext-functors in this and the next section map into the category of abelian groups, we require the following result.

\begin{prop}\label{prop:liminab}
(\cite[p.~261]{osb00}) In the category $\mathbf{Ab}$ of abelian groups, the direct limit of a direct system $\lbrace M_i, \psi_{i, j} \rbrace_{i \leq j \in I}$ can be given as $\bigoplus_{i \in I} M_i/\sim$ where $m_i \in M_i \sim m_j \in M_j$ if there is $k \in I$ such that $\psi_{i, k}(m_i) = \psi_{j, k}(m_j) \in M_k$. In particular, if $\lbrace m_i \in M_i \rbrace_{i \in I}$ is a collection of nonzero elements such that $\psi_{i, j}(m_i) = m_j$ for any $i \leq j$, then the element $(m_i)_{i \in I} \in \varinjlim_{i \in I} M_i$ is nonzero.
\end{prop}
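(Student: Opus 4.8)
The plan is to construct the stated quotient explicitly, verify that it satisfies the universal property of the colimit in $\mathbf{Ab}$, and then read off both the description of the equivalence relation and its consequence for compatible families.

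First I would set $P := \bigoplus_{i \in I} M_i$ with canonical inclusions $\lambda_i: M_i \rightarrow P$, let $D \leq P$ be the subgroup generated by all elements $\lambda_i(x) - \lambda_j(\psi_{i,j}(x))$ with $i \leq j$ in $I$ and $x \in M_i$, put $L := P/D$ with quotient map $q: P \rightarrow L$, and define $\mu_i := q \circ \lambda_i$. By construction $\mu_j \circ \psi_{i,j} = \mu_i$ for all $i \leq j$, so $(L, (\mu_i)_{i \in I})$ is a cocone under the direct system. Given any other cocone $(N, (\nu_i)_{i \in I})$, the universal property of the coproduct yields $\bigoplus_i \nu_i: P \rightarrow N$, and this homomorphism annihilates every generator of $D$ because $\nu_j(\psi_{i,j}(x)) = \nu_i(x)$; hence it factors through $q$, and the factorisation is unique since the images $\mu_i(M_i)$ generate $L$. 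Therefore $L$ together with the $\mu_i$ is $\varinjlim_{i \in I} M_i$.

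Next I would identify the equivalence relation. Using that $I$ is directed, any element of $P$ lies, modulo $D$, in $\lambda_k(M_k)$ for a single sufficiently large $k$: a finite sum $\sum_{t=1}^{r} \lambda_{i_t}(x_t)$ is congruent modulo $D$ to $\lambda_k\bigl(\sum_t \psi_{i_t,k}(x_t)\bigr)$ for any $k \geq i_1, \dots, i_r$. Consequently every class in $L$ has the form $\mu_k(m)$, which identifies $L$ with a quotient of $\bigsqcup_i M_i$ by a relation on the individual summands. It then remains to show that $\mu_i(m_i) = \mu_j(m_j)$ holds precisely when there is $k \geq i, j$ with $\psi_{i,k}(m_i) = \psi_{j,k}(m_j)$. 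The implication ``$\Leftarrow$'' is immediate from $\mu_i = \mu_k \circ \psi_{i,k}$. For ``$\Rightarrow$'', write $\lambda_i(m_i) - \lambda_j(m_j)$ as a finite $\mathbb{Z}$-linear combination of generators $\lambda_a(x) - \lambda_b(\psi_{a,b}(x))$ of $D$, choose $k \in I$ above $i$, $j$ and all the finitely many indices $a, b$ occurring, and apply the homomorphism $\Theta_k: \bigoplus_{\ell} M_\ell \rightarrow M_k$ (the sum over the finitely many relevant indices $\ell \leq k$) with $\Theta_k|_{M_\ell} = \psi_{\ell,k}$. The compatibility $\psi_{b,k} \circ \psi_{a,b} = \psi_{a,k}$ kills every generator, so $\psi_{i,k}(m_i) - \psi_{j,k}(m_j) = \Theta_k(\lambda_i(m_i) - \lambda_j(m_j)) = 0$.

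Finally, the ``in particular'' follows at once: if $(m_i)_{i \in I}$ satisfies $\psi_{i,j}(m_i) = m_j$ for all $i \leq j$ and every $m_i$ is nonzero, then its image in $\varinjlim_{i \in I} M_i$ equals the common class $\mu_{i_0}(m_{i_0})$ for any fixed $i_0$; were this zero, i.e.\ $\mu_{i_0}(m_{i_0}) = \mu_{i_0}(0)$, the characterisation just proved would give some $k \geq i_0$ with $m_k = \psi_{i_0,k}(m_{i_0}) = \psi_{i_0,k}(0) = 0$, contradicting $m_k \neq 0$. The only genuinely technical point is the ``$\Rightarrow$'' implication: one must exploit directedness to reduce a relation in the direct sum, which a priori involves finitely many unrelated generators of $D$, to a single equality in one module $M_k$ via the transition map out of a finite sub-coproduct; everything else is the routine verification of a universal property.
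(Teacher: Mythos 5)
Your proof is correct and is the standard construction of filtered colimits of abelian groups. Note, however, that the paper offers no proof of this proposition at all: it is stated with a citation to Osborne's book (p.~261) and treated as a known fact. So there is no ``paper's proof'' to compare against; what you supply is precisely the argument the cited source would give. The one point worth highlighting, which you correctly flag as the only technical step, is the ``$\Rightarrow$'' direction of the equivalence: a relation $\lambda_i(m_i) - \lambda_j(m_j) \in D$ a priori involves finitely many unrelated generators, and you must use directedness to push everything into a single $M_k$ via $\Theta_k$ before concluding $\psi_{i,k}(m_i) = \psi_{j,k}(m_j)$. This is exactly where a careless reader would go wrong, and you handle it correctly. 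The ``in particular'' clause then follows by the contrapositive as you state, and the argument is complete.
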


We construct the desired isomorphism from the terms of the resolution construction to the ones of the naïve construction by generalising G.\! Mislin's Theorem~4.1 and its proof from~\cite{mis94}.

\begin{lem}\label{lem:betisomorphism}
Let $A$, $B$ be objects in $\mathcal{C}$. Then for every $n \in \mathbb{Z}$ and $k \in \mathbb{N}_0$ with $n+k \geq 1$ the square
\begin{equation}\label{diag:inducedbeta}
\begin{tikzcd}
  \mathrm{Ext}_{\mathcal{C}}^{n+k}(A, \widetilde{B}_k) \arrow[rr, "\beta_{n+k}(\widetilde{B}_k)"] \arrow[d, "\delta^{n+k}"]
    &{}
    &{[}\widetilde{A}_{n+k}{,} \, \widetilde{B}_k{]}_{\mathcal{C}} \arrow[d, "t_{\widetilde{A}_{n+k}{,} \, \widetilde{B}_k}"] \\
  \mathrm{Ext}_{\mathcal{C}}^{n+k+1}(A, \widetilde{B}_{k+1}) \arrow[rr, "\beta_{n+k+1}(\widetilde{B}_{k+1})"]
    &{}
    &{[}\widetilde{A}_{n+k+1}{,} \, \widetilde{B}_{k+1}{]}_{\mathcal{C}}
\end{tikzcd}
\end{equation}
commutes. The homomorphism resulting in the direct limit of the above diagrams
\[ \beta^n(B) := \varinjlim_{k \in \mathbb{N}_0, n+k \geq 1} \beta_{n+k}(\widetilde{B}_k): \widehat{\mathrm{Ext}}_{\mathcal{C}}^n(A, B) \rightarrow BC_{\mathcal{C}}^n(A, B) \]
is an isomorphism.
\end{lem}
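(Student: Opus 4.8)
The plan is to establish the two assertions in turn: first the commutativity of the square in Diagram~\ref{diag:inducedbeta}, then that the resulting colimit map $\beta^n(B)$ is bijective. Throughout I would write $\pi^A_j\colon A_j\to\widetilde A_j$ and $\iota^A_j\colon\widetilde A_j\hookrightarrow A_{j-1}$ for the canonical epi- and monomorphism attached to the resolution $A_\bullet$, so that $\partial^A_j=\iota^A_j\circ\pi^A_j$ and $\mathrm{Ker}\,\pi^A_j=\widetilde A_{j+1}$, and similarly for $B_\bullet$. Since the index set $\{k\in\mathbb N_0:n+k\ge 1\}$ on which $\beta_{n+k}(\widetilde B_k)$ is defined is cofinal both in $\mathbb N_0$ (the indexing of the resolution construction, Definition~\ref{defn:resolsinoutline}) and in $\{k:n+k\ge 0\}$ (that of the naïve construction, Definition~\ref{defn:naiveconstr}), all three colimits may be computed over this common cofinal subset, and once the square commutes the $\beta_{n+k}(\widetilde B_k)$ assemble into the morphism $\beta^n(B)$.

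For the commutativity of Diagram~\ref{diag:inducedbeta} I would take a cocycle $f\colon A_{n+k}\to\widetilde B_k$ with $f\circ\partial^A_{n+k+1}=0$ representing a class in $\mathrm{Ext}^{n+k}_{\mathcal C}(A,\widetilde B_k)$, so that $\widetilde f_k:=\alpha_{n+k}(\widetilde B_k)(f)$ is the unique map with $f=\widetilde f_k\circ\pi^A_{n+k}$ and $\beta_{n+k}(\widetilde B_k)$ sends the class to $\widetilde f_k+\mathcal P_{\mathcal C}(\widetilde A_{n+k},\widetilde B_k)$. The key point is that a single lift serves both legs of the square: choose $\hat f\colon A_{n+k}\to B_k$ with $\pi^B_k\circ\hat f=f$, possible as $A_{n+k}$ is projective and $\pi^B_k$ is epic. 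On the naïve side $\hat f$ is exactly a lift of $\widetilde f_k$ in the sense of Proposition~\ref{prop:transitioning}, and $t_{\widetilde A_{n+k},\widetilde B_k}(\widetilde f_k+\mathcal P)=\widetilde f_{k+1}+\mathcal P$, where $\widetilde f_{k+1}\colon\widetilde A_{n+k+1}\to\widetilde B_{k+1}$ is the induced map on kernels, i.e. the unique map with $\iota^B_{k+1}\circ\widetilde f_{k+1}=\hat f\circ\iota^A_{n+k+1}$ (one uses $\pi^A_{n+k}\circ\iota^A_{n+k+1}=0$ to see $\hat f\circ\iota^A_{n+k+1}$ lands in $\widetilde B_{k+1}=\mathrm{Ker}\,\pi^B_k$). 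On the resolution side $\delta^{n+k}$ is the connecting homomorphism of $\mathrm{Ext}^\bullet_{\mathcal C}(A,-)$ for the syzygy sequence $0\to\widetilde B_{k+1}\xrightarrow{\iota^B_{k+1}}B_k\xrightarrow{\pi^B_k}\widetilde B_k\to 0$; applying the exact functor $\mathrm{Hom}_{\mathcal C}(A_j,-)$ in each degree (each $A_j$ is projective) and running the snake lemma with the same $\hat f$ gives $\delta^{n+k}([f])=[g]$ with $\iota^B_{k+1}\circ g=\hat f\circ\partial^A_{n+k+1}=\hat f\circ\iota^A_{n+k+1}\circ\pi^A_{n+k+1}=\iota^B_{k+1}\circ\widetilde f_{k+1}\circ\pi^A_{n+k+1}$, so $g=\widetilde f_{k+1}\circ\pi^A_{n+k+1}$ (as $\iota^B_{k+1}$ is monic), hence $\alpha_{n+k+1}(\widetilde B_{k+1})(g)=\widetilde f_{k+1}$ and $\beta_{n+k+1}(\widetilde B_{k+1})(\delta^{n+k}([f]))=\widetilde f_{k+1}+\mathcal P$. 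The two legs coincide, so the square commutes and passing to colimits defines $\beta^n(B)$.

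For surjectivity I would note that each $\beta_m(N)$ with $m\ge 1$ is visibly epic: as recorded in the construction of $\beta_m$, via the isomorphism $\alpha_m(N)$ the group $\mathrm{Ext}^m_{\mathcal C}(A,N)$ is identified with $\mathrm{Hom}_{\mathcal C}(\widetilde A_m,N)/I_m(N)$, where $I_m(N)=\{h\circ\iota^A_m:h\in\mathrm{Hom}_{\mathcal C}(A_{m-1},N)\}$, and $I_m(N)\subseteq\mathcal P_{\mathcal C}(\widetilde A_m,N)$ since $A_{m-1}$ is projective, so $\beta_m(N)$ is the canonical surjection of one quotient of $\mathrm{Hom}_{\mathcal C}(\widetilde A_m,N)$ onto a coarser one; as direct limits in $\mathbf{Ab}$ are exact, $\beta^n(B)=\varinjlim\beta_{n+k}(\widetilde B_k)$ is surjective. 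For injectivity the decisive observation is that $\mathrm{Ker}\,\beta_{n+k}(\widetilde B_k)$ is annihilated by the very next transition map $\delta^{n+k}$ of the resolution direct system: if $\beta_{n+k}(\widetilde B_k)([f])=0$ then $\widetilde f_k=\alpha_{n+k}(\widetilde B_k)(f)$ factors as $\widetilde A_{n+k}\xrightarrow{a}Q\xrightarrow{b}\widetilde B_k$ with $Q$ projective; lifting $b$ along $\pi^B_k$ to $\hat b\colon Q\to B_k$ and setting $\hat f:=\hat b\circ a\circ\pi^A_{n+k}$ yields a lift of $f$ with $\hat f\circ\partial^A_{n+k+1}=\hat b\circ a\circ\pi^A_{n+k}\circ\iota^A_{n+k+1}\circ\pi^A_{n+k+1}=0$, so the snake-lemma formula above gives $\delta^{n+k}([f])=0$. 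Hence, given $x\in\widehat{\mathrm{Ext}}^n_{\mathcal C}(A,B)$ with $\beta^n(B)(x)=0$, I would represent $x$ by a class in $\mathrm{Ext}^{n+k}_{\mathcal C}(A,\widetilde B_k)$ using the description of colimits in $\mathbf{Ab}$ from Proposition~\ref{prop:liminab}; the hypothesis forces its $\beta_{n+k}(\widetilde B_k)$-image to vanish after finitely many transition maps $t$, so by the iterated commutative square the image $[f']$ of our class under the iterated transition $\delta^{n+l-1}\circ\cdots\circ\delta^{n+k}$ for some $l\ge k$ lies in $\mathrm{Ker}\,\beta_{n+l}(\widetilde B_l)$; since $[f']$ still represents $x$ and $\delta^{n+l}([f'])=0$ by the observation, $x=0$, so $\beta^n(B)$ is an isomorphism.

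I expect the main obstacle to be the bookkeeping in the commutativity step: matching the syzygy lift produced by the naïve transition map $t$ against the one produced by the snake-lemma connecting homomorphism $\delta^{n+k}$, and fixing the cochain-level sign convention on $\mathrm{Hom}_{\mathcal C}(A_\bullet,-)$ so that $\delta^{n+k}$ is unsigned and matches the transition maps of the resolution direct system in Definition~\ref{defn:resolsinoutline}; once the single lift $\hat f$ is used on both sides, everything else is routine.
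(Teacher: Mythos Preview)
Your proposal is correct and follows essentially the same approach as the paper: the commutativity argument via a single common lift $\hat f$ serving both $t$ and $\delta^{n+k}$, surjectivity via levelwise surjectivity of $\beta_{n+k}$, and injectivity via the key observation that $\mathrm{Ker}\,\beta_{n+k}(\widetilde B_k)\subseteq\mathrm{Ker}\,\delta^{n+k}$ combined with Proposition~\ref{prop:liminab}. Your injectivity step is in fact a mild streamlining of the paper's: by choosing the lift $\hat f=\hat b\circ a\circ\pi^A_{n+k}$ you get $\hat f\circ\partial^A_{n+k+1}=0$ outright, whereas the paper produces an arbitrary lift and then exhibits the resulting cocycle as a coboundary; both are the same idea.
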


\begin{proof}
To prove that Diagram~\ref{diag:inducedbeta} is commutative, let $f + \mathrm{Im}(\mathrm{Hom}_{\mathcal{C}}(\partial_{n+k}, \widetilde{A}_k)$ be an element in $\mathrm{Ext}_{\mathcal{C}}^{n+k}(A, \widetilde{B}_k)$. It suffices to show that the diagram
\begin{equation}\label{diag:mislinbenson}
\begin{tikzcd}
  A_{n+k+1} \arrow[r, "\pi_{n+k+1}"] \arrow[rd, "F"]
    &\widetilde{A}_{n+k+1} \arrow[r, "\iota_{n+k+1}"] \arrow[d, "\overline{f}^{\ast}"]
    &A_{n+k} \arrow[rr, "\pi_{n+k}"] \arrow[rrd, "f"] \arrow[d, "\overline{f}"]
    & &\widetilde{A}_{n+k} \arrow[d, "\alpha_{n+k}(\widetilde{B}_k)(f)"] \\
  {}
    &\widetilde{B}_{k+1} \arrow[r, "\iota_{k+1}"]
    &B_k \arrow[rr, "\pi_k"]
    & &\widetilde{B}_k
\end{tikzcd}
\end{equation}
commutes where we explain in the following how to construct it. We can lift $\alpha_{n+k}(\widetilde{B}_k)(f)$ to $\overline{f}^{\ast}$ as we have done in Diagram~\ref{diag:simplelift}. In particular, the morphism $\overline{f}^{\ast}$ is a representative of
\[ t_{\widetilde{A}_{n+k}, \widetilde{B}_k} \circ \beta_{n+k}(\widetilde{B}_k)(f + \mathrm{Im}(\mathrm{Hom}_{\mathcal{C}}(\partial_{n+k}, \widetilde{B}_k))) \, . \]
If we write $F$ for $\overline{f}^{\ast} \circ \pi_{n+k+1}$, then one can deduce that
\[ \delta^{n+k} \big(f + \mathrm{Im}(\mathrm{Hom}_{\mathcal{C}}(\partial_{n+k}, \widetilde{B}_k)) \big) = F + \mathrm{Im}(\mathrm{Hom}_{\mathcal{C}}(\partial_{n+k+1}, \widetilde{B}_{k+1})) \in \mathrm{Ext}_{\mathcal{C}}^{n+k+1}(A, \widetilde{B}_{k+1}) \]
by the construction of the connecting homomorphism as in~\cite[pp.~11--12]{wei94}. Thus, $\overline{f}^{\ast}$ is also a representative of $\beta_{n+k+1}(\widetilde{B}_{k+1}) \circ \delta^{n+k}(f + \mathrm{Im}(\mathrm{Hom}_{\mathcal{C}}(\partial_{n+k}, \widetilde{B}_k)))$ and Diagram~\ref{diag:inducedbeta} commutes. \\

Let us show that $\beta^{n}(B)$ is an isomorphism. As $\beta_{n+k}(\widetilde{B}_k)$ is surjective and direct limits in the category of abelian groups are exact, $\beta^n(B)$ is surjective. To demonstrate that $\beta^n(B)$ is injective, let $\varphi \in \widehat{\mathrm{Ext}}_{\mathcal{C}}^n(A, B)$ be an element such that $\beta^n(B)(\varphi) = 0$. By the resolution construction and  Proposition~\ref{prop:liminab}, there exists $k \in \mathbb{N}_0$ and $g \in \mathrm{Ker}\big(\mathrm{Hom}_{\mathcal{C}}(\partial_{n+k+1}, \widetilde{B}_k))\big)$ such that
\[ g + \mathrm{Im}(\mathrm{Hom}_{\mathcal{C}}(\partial_{n+k}, \widetilde{B}_k)) \in \mathrm{Ext}_{\mathcal{C}}^{n+k}(A, \widetilde{B}_k) \]
is mapped to $\varphi \in \widehat{\mathrm{Ext}}_{\mathcal{C}}^n(A, B)$ in the direct limit. Since we may assume that
\[ \beta_{n+k}(\widetilde{B}_k)\big(g + \mathrm{Im}(\mathrm{Hom}_{\mathcal{C}}(\partial_{n+k}, \widetilde{B}_k))\big) = 0 \, , \]
the morphism $\alpha_{n+k}(\widetilde{B}_k)(g): \widetilde{A}_{n+k} \rightarrow \widetilde{B}_k$ from Definition~\ref{defn:betaprelim} factors through a projective. We infer from this and Diagram~\ref{diag:mislinbenson} that there is a morphism $h: \widetilde{A}_{n+k} \rightarrow \widetilde{B}_{k+1}$ rendering the diagram
\begin{center}
\begin{tikzcd}
  A_{n+k+1} \arrow[r, "\pi_{n+k+1}"] \arrow[rd, "G"]
    & \widetilde{A}_{n+k+1} \arrow[rr, "\iota_{n+k+1}"] \arrow[d, "\overline{g}^{\ast}"] & & \widetilde{A}_{n+k} \arrow[lld, "h" near start] \arrow[d, "\overline{g}"] \\
  {}
    & \widetilde{B}_{k+1} \arrow[rr, "\iota_{k+1}" near end] & & B_k
\end{tikzcd}
\end{center}
commutative. Because $G = h \circ \partial_{n+k+1}$ is a representative in $\mathrm{Im}(\delta^{n+k})$, we infer that $\delta^{n+k}(g + \mathrm{Im}(\mathrm{Hom}_{\mathcal{C}}(\partial_{n+k}, \widetilde{B}_k))) = 0$. Therefore, $\varphi = 0$ and $\beta^n(B)$ is injective.
\end{proof}

\subsection{Induced homomorphisms and connecting homomorphisms}

We develop an explicit formula for induced homomorphisms of the naïve construction.

\begin{defn}\label{defn:inducednaivemorph}
Let $A$, $B$, $C$ be objects in $\mathcal{C}$ and let $A_{\bullet}$, $B_{\bullet}$, $C_{\bullet}$ be projective resolutions. Let $f: B \rightarrow C$ be a morphism and let $(\widetilde{f}_k: \widetilde{B}_k \rightarrow \widetilde{C}_k)_{k \in \mathbb{N}_0}$ be a sequence of morphisms as in Definition~\ref{defn:resolindmorph}. By Proposition~\ref{prop:transitioning}, the square of homomorphisms of abelian groups
\begin{equation}\label{diag:inducenaivemorph}
\begin{tikzcd}
    {[}\widetilde{A}_{n+k}, \widetilde{B}_k{]}_{\mathcal{C}} \arrow[rrrrr, "\big(\widetilde{f}_k+\mathcal{P}_{\mathcal{C}}(\widetilde{B}_k{,} \widetilde{C}_k)\big) \circ -"] \arrow[d, "t_{\widetilde{A}_{n+k}, \widetilde{B}_k}"] & & & & & {[}\widetilde{A}_{n+k}, \widetilde{C}_k{]}_{\mathcal{C}} \arrow[d, "t_{\widetilde{A}_{n+k}, \widetilde{C}_k}"] \\
    {[}\widetilde{A}_{n+k+1}, \widetilde{B}_{k+1}{]}_{\mathcal{C}} \arrow[rrrrr, "\big(\widetilde{f}_{k+1}+\mathcal{P}_{\mathcal{C}}(\widetilde{B}_{k+1}{,} \widetilde{C}_{k+1})\big) \circ -"] & & & & & {[}\widetilde{A}_{n+k+1}, \widetilde{C}_{k+1}{]}_{\mathcal{C}}
\end{tikzcd}
\end{equation}
commutes. Then
\[ BC_{\mathcal{C}}^n(A, f) := \varinjlim_{k \in \mathbb{N}_0, n+k \geq 0}\big(\big(\widetilde{f}_k+\mathcal{P}_{\mathcal{C}}(\widetilde{B}_k, \widetilde{C}_k)\big)\circ - \big): BC_{\mathcal{C}}^n(A, B) \rightarrow BC_{\mathcal{C}}^n(A, C) \]
is the induced homomorphism in the naïve construction. \\

Analogously, if $g: C \rightarrow A$ is another morphism with a corresponding sequence of morphisms $(\widetilde{g}_k: \widetilde{B}_k \rightarrow \widetilde{C}_k)_{k \in \mathbb{N}_0}$, then
\[ BC_{\mathcal{C}}^n(g, B) := \varinjlim_{k \in \mathbb{N}_0, n+k \geq 0}\big(- \circ \big(\widetilde{g}_{n+k}+\mathcal{P}_{\mathcal{C}}(\widetilde{C}_{n+k}, \widetilde{A}_{n+k})\big)\big): BC_{\mathcal{C}}^n(A, B) \rightarrow BC_{\mathcal{C}}^n(C, B) \]
defines another induced homomorphism in the naïve construction.
\end{defn}

One can deduce from this definition the following proposition.

\begin{prop}\label{prop:naivebifunctor}
For every $n \in \mathbb{Z}$, $BC_{\mathcal{C}}^n(-, -): \mathcal{C}^{\mathrm{op}} \times \mathcal{C} \rightarrow \mathbf{Ab}$ forms a well defined bifunctor.
\end{prop}

As a step in showing that the naïve construction defines completed Ext-functors, we demonstrate

\begin{lem}\label{lem:betanatural}
For every $n \in \mathbb{Z}$ the isomorphism $\beta^n(-)$ is natural. That is, for every $f \in \mathrm{Hom}_{\mathcal{C}}(B, C)$ the square
\begin{center}
\begin{tikzcd}
  \widehat{\mathrm{Ext}}_{\mathcal{C}}^n(A{,}\, B) \arrow[r, "\beta^n(B)"] \arrow[d, "\widehat{\mathrm{Ext}}_{\mathcal{C}}^n(A{,}\, f)"] & BC_{\mathcal{C}}^n(A{,}\, B) \arrow[d, "BC_{\mathcal{C}}^n(A{,}\, f)"] \\
  \widehat{\mathrm{Ext}}_{\mathcal{C}}^n(A{,}\, C) \arrow[r, "\beta^n(C)"] & BC_{\mathcal{C}}^n(A{,}\, C)
\end{tikzcd}
\end{center}
commutes.
\end{lem}

\begin{proof}
By construction of $\widehat{\mathrm{Ext}}_{\mathcal{C}}^n(A, f)$ from Definition~\ref{defn:resolindmorph} and of $BC_{\mathcal{C}}^n(A, f)$ from Definition~\ref{defn:inducednaivemorph}, it suffices to consider the cube
\begin{center}
\begin{tikzcd}[row sep = scriptsize, column sep = tiny]
  & \mathrm{Ext}_{\mathcal{C}}^{n+k}(A, \widetilde{B}_k) \arrow[dl, "\mathrm{Ext}_{\mathcal{C}}^{n+k}(A{,} \widetilde{f}_k)"] \arrow[rr, "\beta_{n+k}(\widetilde{B}_k)" near end] \arrow[dd, shift left = 2mm, "\delta^{n+k}" near end] & & {[}\widetilde{A}_{n+k}{,} \, \widetilde{B}_k{]}_{\mathcal{C}} \arrow[dl, "{[}\widetilde{A}_{n+k}{,} \, \widetilde{f}_k{]}_{\mathcal{C}}" near end] \arrow[dd, "t_{\widetilde{A}_{n+k}, \widetilde{B}_k}"] \\
  \mathrm{Ext}_{\mathcal{C}}^{n+k}(A, \widetilde{C}_k) \arrow[rr, crossing over, "\beta_{n+k}(\widetilde{C}_k)" near end] \arrow[dd, "\delta^{n+k}"] & & {[}\widetilde{A}_{n+k}{,} \, \widetilde{C}_k{]}_{\mathcal{C}} \\
  & \mathrm{Ext}_{\mathcal{C}}^{n+k+1}(A, \widetilde{B}_{k+1}) \arrow[dl, "\mathrm{Ext}_{\mathcal{C}}^{n+k+1}(A{,} \widetilde{f}_{k+1})" near start] \arrow[rr, "\beta_{n+k+1}(\widetilde{B}_{k+1})" near end] & & {[}\widetilde{A}_{n+k+1}{,} \, \widetilde{B}_{k+1}{]}_{\mathcal{C}} \arrow[dl, "{[}\widetilde{A}_{n+k+1}{,} \, \widetilde{f}_{k+1}{]}_{\mathcal{C}}"] \\
  \mathrm{Ext}_{\mathcal{C}}^{n+k+1}(A, \widetilde{C}_{k+1}) \arrow[rr, "\beta_{n+k+1}(\widetilde{C}_{k+1})" near end] & & {[}\widetilde{A}_{n+k+1}{,} \, \widetilde{C}_{k+1}{]}_{\mathcal{C}} \arrow[from=uu, crossing over, shift right = 3mm, "t_{\widetilde{A}_{n+k}, \widetilde{C}_k}" near end]
\end{tikzcd}
\end{center}
for any $k \in \mathbb{N}_0$ with $n+k \geq 1$. The left hand side commutes and right hand side, too, due to the choice of the sequence of morphisms $(\widetilde{f}_k: \widetilde{B}_k \rightarrow \widetilde{C}_k)_{k \in \mathbb{N}_0}$. The front and back side correspond to Diagram~\ref{diag:inducedbeta}. Lastly, the top and bottom side commute by Definition~\ref{defn:resolindmorph}, Definition~\ref{defn:inducednaivemorph} and Definition~\ref{defn:betaprelim} and form a direct system of commuting squares. The statement of the proposition follows by passing to the direct limit.
\end{proof}

We require the following definition in order to develop an explicit formula for connecting homomorphisms of the naïve construction.

\begin{defn}\label{defn:naiveconnmorph}
Let $0 \rightarrow B \rightarrow C \rightarrow D \rightarrow 0$ be a short exact sequence in $\mathcal{C}$ and denote by $(0 \rightarrow \widetilde{B}_k \rightarrow \widetilde{C}_k \rightarrow \widetilde{D}_k \rightarrow 0)_{k \in \mathbb{N}_0}$ a sequence of short exact sequences as in Definition~\ref{defn:resolconnmorph}. Let $A \in \mathrm{obj}(\mathcal{C})$ and denote by $A_{\bullet}$ a projective resolution. If $f: \widetilde{A}_{n+k} \rightarrow \widetilde{D}_k$ is a morphism in $\mathcal{C}$, let $\overline{f}^{\ast}: \widetilde{A}_{n+k+1} \rightarrow \widetilde{D}_{k+1}$ be a lift of $f$ as in Diagram~\ref{diag:satellitemorph}. Using Diagram~\ref{diag:delta}, we obtain the commutative diagram
\begin{center}
\begin{tikzcd}
  0 \arrow[r]
    & \widetilde{A}_{n+k+1} \arrow[r] \arrow[d, "\overline{f}^{\ast}"]
    & A_{n+k} \arrow[r] \arrow[d, "\overline{f}"]
    & \widetilde{A}_{n+k} \arrow[r] \arrow[d, "f"]
    & 0 \\
  0 \arrow[r]
    & \widetilde{D}_{k+1} \arrow[r] \arrow[d, "h^{\ast}"]
    & D_k \arrow[r] \arrow[d, "h"]
    & \widetilde{D}_k \arrow[r] \arrow[d, "\mathrm{id}_{\widetilde{D}_k}"]
    & 0 \\
  0 \arrow[r]
    & \widetilde{B}_k \arrow[r]
    & \widetilde{C}_k \arrow[r]
    & \widetilde{D}_k \arrow[r]
    & 0
\end{tikzcd}
\end{center}
Then by Proposition~\ref{prop:biadditive} and~\ref{prop:transitioning} there is a homomorphism
\begin{align*}
\tau_{n+k, k} := [\widetilde{A}_{n+k+1}, h^{\ast}]_{\mathcal{C}} \circ t_{\widetilde{A}_{n+k}, \widetilde{D}_k}: \; \; [\widetilde{A}_{n+k}, \widetilde{D}_k]_{\mathcal{C}} &\rightarrow [\widetilde{A}_{n+k+1}, \widetilde{B}_k]_{\mathcal{C}} \\
f + \mathcal{P}_{\mathcal{C}}(\widetilde{A}_{n+k}, \widetilde{D}_k) &\mapsto h^{\ast} \circ \overline{f}^{\ast} + \mathcal{P}_{\mathcal{C}}(\widetilde{A}_{n+k+1}, \widetilde{B}_k)
\end{align*}
which is well defined in the sense that it does not depend on the choice of the morphism $h^{\ast}: \widetilde{D}_{k+1} \rightarrow \widetilde{B}_k$.
\end{defn}

\begin{thm}\label{thm:naiveconnmorph}
Using the terminology from the previous definition, both the squares
\begin{center}
\begin{tikzcd}[column sep = small, row sep = scriptsize]
  {\scriptstyle \mathrm{Ext}_{\mathcal{C}}^{n+k}(A{,} \widetilde{D}_k)} \arrow[rrr, "{\scriptscriptstyle \beta_{n+k}(\widetilde{D}_k)}"] \arrow[dd, "{\scriptscriptstyle \delta^{n+k}}"]
    & & & {\scriptstyle {[}\widetilde{A}_{n+k}{,} \, \widetilde{D}_k{]}_{\mathcal{C}}} \arrow[dd, "{\scriptscriptstyle \tau_{n+k, k}}"] \arrow[ddrr, bend right = 15, white, "\text{{\normalsize and}}" black]
    & & {\scriptstyle {[}\widetilde{A}_{n+k}{,} \, \widetilde{D}_k{]}_{\mathcal{C}}} \arrow[rrrrr, "{\scriptscriptstyle (-1)^k \tau_{n+k, k}}"] \arrow[dd, "{\scriptscriptstyle t_{\widetilde{A}_{n+k}, \widetilde{D}_k}}"]
    & & & & & {\scriptstyle {[}\widetilde{A}_{n+k+1}{,} \, \widetilde{B}_k{]}_{\mathcal{C}}} \arrow[dd, "{\scriptscriptstyle t_{\widetilde{A}_{n+k+1}, \widetilde{B}_k}}"] \\ \\
  {\scriptstyle \mathrm{Ext}_{\mathcal{C}}^{n+k+1}(A{,} \widetilde{B}_k)} \arrow[rrr, "{\scriptscriptstyle \beta_{n+k+1}(\widetilde{B}_k)}"]
    & & & {\scriptstyle {[}\widetilde{A}_{n+k+1}{,} \, \widetilde{B}_k{]}_{\mathcal{C}}}
    & & {\scriptstyle {[}\widetilde{A}_{n+k+1}{,} \, \widetilde{D}_{k+1}{]}_{\mathcal{C}}} \arrow[rrrrr, "{\scriptscriptstyle (-1)^{k+1}\tau_{n+k+1, k+1}}"]
    & & & & & {\scriptstyle {[}\widetilde{A}_{n+k+2}{,} \, \widetilde{B}_{k+1}{]}_{\mathcal{C}}}
\end{tikzcd}
\end{center}
commute for every $k \in \mathbb{N}_0$ with $n+k \geq 1$. The homomorphism in the direct limit of the above right hand square written as
\[ \tau^n := \varinjlim_{k \in \mathbb{N}_0, n+k \geq 1} (-1)^k \tau_{n+k, k}: BC_{\mathcal{C}}^n(A, D) \rightarrow BC_{\mathcal{C}}^{n+1}(A, B) \]
defines the connecting homomorphism in the naïve construction. It fits into the commuting square
\begin{center}
\begin{tikzcd}
     \widehat{\mathrm{Ext}}_{\mathcal{C}}^n(A{,} \, D) \arrow[r, "\beta^n(D)"] \arrow[d, "\widehat{\delta}^n"] & BC_{\mathcal{C}}^n(A{,} \, D) \arrow[d, "\tau^n"] \\
     \widehat{\mathrm{Ext}}_{\mathcal{C}}^{n+1}(A{,} \, B) \arrow[r, "\beta^{n+1}(B)"] & BC_{\mathcal{C}}^{n+1}(A{,} \, B)
\end{tikzcd}
\end{center}
\end{thm}

\begin{proof}
Taking a morphism $h^{\ast}: \widetilde{D}_{k+1} \rightarrow \widetilde{B}_k$ as in Definition~\ref{defn:naiveconnmorph}, we see that the square
\begin{equation}\label{diag:liftfactoring}
\begin{tikzcd}
    \mathrm{Ext}_{\mathcal{C}}^{n+k}(A{,} \, \widetilde{D}_k) \arrow[r, "\delta^{n+k}"] \arrow[d, "\mathrm{id}"] & \mathrm{Ext}_{\mathcal{C}}^{n+k+1}(A{,} \, \widetilde{D}_{k+1}) \arrow[d, "\mathrm{Ext}_{\mathcal{C}}^{n+k+1}(A{,} \, h^{\ast})"] \\
    \mathrm{Ext}_{\mathcal{C}}^{n+k}(A{,} \, \widetilde{D}_k) \arrow[r, "\delta^{n+k}"] & \mathrm{Ext}_{\mathcal{C}}^n(A{,} \, \widetilde{B}_k)
\end{tikzcd}
\end{equation}
commutes. Using this and Definition~\ref{defn:naiveconnmorph}, we can factorise the top left square in the statement of the Lemma as
\begin{center}
\begin{tikzcd}
  \mathrm{Ext}_{\mathcal{C}}^{n+k}(A{,} \, \widetilde{D}_k) \arrow[rrr, "\beta_{n+k}(\widetilde{D}_k)"] \arrow[d, "\delta^{n+k}"] & & & {[}\widetilde{A}_{n+k}{,} \, \widetilde{D}_k{]}_{\mathcal{C}} \arrow[d, "t_{\widetilde{A}_{n+k}, \widetilde{D}_k}"] \\
  \mathrm{Ext}_{\mathcal{C}}^{n+k+1}(A{,} \, \widetilde{D}_{k+1}) \arrow[rrr, "\beta_{n+k+1}(\widetilde{D}_{k+1})"] \arrow[d, "\mathrm{Ext}_{\mathcal{C}}^{n+k+1}(A{,} \, h^{\ast})"] & & & {[}\widetilde{A}_{n+k+1}{,} \, \widetilde{D}_{k+1}{]}_{\mathcal{C}} \arrow[d, "{[}\widetilde{A}_{n+k+1}{,}\, h^{\ast}{]}_{\mathcal{C}}"] \\
  \mathrm{Ext}_{\mathcal{C}}^{n+k+1}(A{,} \, \widetilde{B}_k) \arrow[rrr, "\beta_{n+k+1}(\widetilde{B}_k)"] & & & {[}\widetilde{A}_{n+k+1}{,} \, \widetilde{B}_k{]}_{\mathcal{C}}
\end{tikzcd}
\end{center}
It commutes by Lemma~\ref{lem:betisomorphism} and Lemma~\ref{lem:betanatural}. By this square, Diagram~\ref{diag:anticomm} and Lemma~\ref{lem:betisomorphism} together with its proof, we conclude that the top right square in the statement of the lemma commutes. The latter squares form a direct system in whose direct limit we obtain the bottom square in the lemma.
\end{proof}

By Lemma~\ref{lem:betanatural} and Theorem~\ref{thm:naiveconnmorph}, we conclude that the naïve construction $(BC_{\mathcal{C}}^{\bullet}(A, -), \tau^{\bullet})$ with the formula for induced homomorphisms from Definition~\ref{defn:inducednaivemorph} and for connecting homomorphisms from Theorem~\ref{thm:naiveconnmorph} forms a Mislin completion of $(\mathrm{Ext}_{\mathcal{C}}^{\bullet}(A, -), \delta^{\bullet})$.

\begin{thm}\label{thm:resolbenson}
The naïve construction $(BC_{\mathcal{C}}^{\bullet}(A, -), \tau^{\bullet})$ forms a Mislin completion of $(\mathrm{Ext}_{\mathcal{C}}^{\bullet}(A, -), \delta^{\bullet})$. More specifically, $(BC_{\mathcal{C}}^{\bullet}(A, -), \tau^{\bullet})$ forms a cohomological functor and $\beta^{\bullet}: (\widehat{\mathrm{Ext}}_{\mathcal{C}}^{\bullet}(A, -), \widehat{\delta}^{\bullet}) \rightarrow (BC_{\mathcal{C}}^{\bullet}(A, -), \tau^{\bullet})$ is an isomorphism of cohomological functors.
\end{thm}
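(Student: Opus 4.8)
The plan is to upgrade the pointwise isomorphisms $\beta^n(B)$ of Lemma~\ref{lem:betisomorphism} to an isomorphism of cohomological functors, following exactly the template that was used in Theorem~\ref{thm:resolsatellites} to promote $\omega_\bullet$. First I would recall that for each $n\in\mathbb Z$ the family $(\beta^n(B))_{B\in\mathrm{obj}(\mathcal C)}$ is a natural isomorphism $\widehat{\mathrm{Ext}}_{\mathcal C}^n(A,-)\to BC_{\mathcal C}^n(A,-)$ by Lemma~\ref{lem:betisomorphism} and Proposition~\ref{prop:betanatural}, and that $\tau^n$ was defined in Lemma~\ref{lem:naiveconnmorph} precisely so that the square relating $\beta^n$, $\widehat\delta^n$, $\tau^n$, $\beta^{n+1}$ commutes for any short exact sequence $0\to B\to C\to D\to 0$. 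So the compatibility of $\beta^\bullet$ with connecting homomorphisms is already in hand; what remains is to check that $(BC_{\mathcal C}^\bullet(A,-),\tau^\bullet)$ satisfies Axioms~\ref{axm:delta} and~\ref{axm:les}, and then that $\beta^\bullet$ is a morphism of cohomological functors in the sense of Axiom~\ref{axm:morphisms} — but the last point is exactly the commuting square of Lemma~\ref{lem:naiveconnmorph}.

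The key steps, in order: \textbf{(1)} Naturality of $\tau^\bullet$. Given a morphism of short exact sequences in $\mathcal C$, build the cube whose front and back faces are the defining squares of $\beta^n$ versus $\tau^n$ (from Lemma~\ref{lem:naiveconnmorph}), whose top and bottom faces are the naturality squares of $\beta^n$ and $\beta^{n+1}$ (Proposition~\ref{prop:betanatural}), and whose left face is the naturality square for $\widehat\delta^\bullet$ (valid since $\widehat{\mathrm{Ext}}_{\mathcal C}^\bullet(A,-)$ is a cohomological functor). Since $\beta^n(-)$ is a pointwise isomorphism, commutativity of the other five faces forces the right face — the naturality square for $\tau^\bullet$ — to commute; this establishes Axiom~\ref{axm:delta}. \textbf{(2)} Long exactness. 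Using that each $\beta^n(-)$ is a natural isomorphism commuting with all the structure maps $BC_{\mathcal C}^n(A,f)$, $BC_{\mathcal C}^n(A,g)$ and $\tau^n$ (the first two by Proposition~\ref{prop:betanatural}, the last by Lemma~\ref{lem:naiveconnmorph}), form the ladder of long sequences for $0\to B\to C\to D\to 0$ with vertical maps $\beta^n$; the bottom row (the sequence for $\widehat{\mathrm{Ext}}_{\mathcal C}^\bullet(A,-)$) is exact, and the Five Lemma~\cite[Proposition~I.1.1]{car56} — or simply the fact that isomorphisms carry exact sequences to exact sequences — transports exactness to the top row, giving Axiom~\ref{axm:les}. \textbf{(3)} Conclude that $(BC_{\mathcal C}^\bullet(A,-),\tau^\bullet)$ is a cohomological functor, that $\beta^\bullet$ satisfies Axiom~\ref{axm:morphisms} (again Lemma~\ref{lem:naiveconnmorph}), and hence — being a pointwise isomorphism — is an isomorphism of cohomological functors.

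I expect step~(1) to be the only place demanding genuine care, and it is not so much an obstacle as a bookkeeping exercise: one must be precise about which face of the cube is which, and make sure the sign conventions baked into $\tau^n=\varinjlim(-1)^k\tau_{n+k,k}$ and into $\widehat\delta^n$ match up, but Lemma~\ref{lem:naiveconnmorph} was stated with exactly these signs so no new sign analysis is needed. Everything else is a formal consequence of having a pointwise natural isomorphism that is compatible with connecting homomorphisms, so the proof is short: it reduces, essentially verbatim, to the argument already given for Theorem~\ref{thm:resolsatellites}, with $\omega_\bullet$ replaced by $\beta^\bullet$, $T_{Res}^\bullet$ by $BC_{\mathcal C}^\bullet(A,-)$, $\widetilde\delta^\bullet$ by $\tau^\bullet$, and Proposition~\ref{prop:omeganatural}/Lemma~\ref{lem:commwithconn} by Proposition~\ref{prop:betanatural}/Lemma~\ref{lem:naiveconnmorph}.
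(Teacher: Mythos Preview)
Your proposal is correct and matches the paper's approach essentially verbatim: the paper's proof simply says to reiterate the proof of Theorem~\ref{thm:resolsatellites}, invoking Lemma~\ref{lem:naiveconnmorph} in place of Lemma~\ref{lem:commwithconn}, which is exactly the substitution you spell out (with Proposition~\ref{prop:betanatural} playing the role of Proposition~\ref{prop:omeganatural}). Your steps (1)--(3) are precisely the content of the proof of Theorem~\ref{thm:resolsatellites}, so nothing further is needed.
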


\begin{rem}\label{rem:mislinsmistake}
In the case of modules over a ring, G.\! Mislin constructs in~\cite[p.~299]{mis94} an isomorphism
\[ \varinjlim_{k \in \mathbb{N}_0} S^{-k}\mathrm{Ext}^{n+k}(A, -) \rightarrow BC^n(A, -) \]
for every $n \in \mathbb{Z}$. Then he asserts that one can turn $BC^{\bullet}(A, -)$ into a Mislin completion by transferring the structure of $\widehat{\mathrm{Ext}}^{\bullet}(A, -)$. He claims that homomorphisms of the form $[\widetilde{A}_{n+k}, \widetilde{D}_k] \rightarrow [\widetilde{A}_{n+k+1}, \widetilde{B}_k]$ induce the connecting homomorphism
\[ \tau^n: BC^n(A, D) \rightarrow BC^{n+1}(A, B) \, . \]
Since he does not explain how to construct $\tau^n$, he erroneously misses the alternating signs involved in the direct system that gives rise to $\tau^n$ in Theorem~\ref{thm:naiveconnmorph}.
\end{rem}

\section{The hypercohomology and the resolution construction}\label{sec:hypercohom}

We demonstrate in this section that the hypercohomology construction gives rise to completed Ext-functors in the same way as the naïve construction. We achieve this by constructing isomorphisms from the terms of the resolution construction to the ones of the hypercohomology construction. In order to do this, we relate the hypercohomology complex with chain maps and the Vogel complex with almost chain maps. More specifically, we use almost chain maps to develop explicit formulae for induced homomorphisms and connecting homomorphisms in the hypercohomology construction. \\

As the hypercohomology constructions has been generalised previously, let us provide pointers to the literature. Although S.\! Guo and L.\! Liang have turned the cohomology groups of the Vogel complex into a cohomological functor in~\cite[Proposition~4.8]{guo23}, we do not know whether their construction yields completed Ext-functors (Question~\ref{ques:comparewithguoliang}). Similarly, J.\! Hu et al.\! introduce the Vogel complex in~\cite[p.~7]{hu21} in a vastly general setting, but we also do not know whether their construction yields completed Ext-functors.

\subsection{Induced homomorphisms and connecting homomorphisms}\label{subsec:hyperindconn}

We reformulate the hypercohomology construction in terms of almost chain maps in order to develop the above mentioned formulae for induced homomorphisms and connecting homomorphisms. D.\! J.\! Benson and J.\! F.\! Carlson's hint in their paper~\cite[p.~109]{ben92} at such a reformulation in the case of modules over a ring. We require the following notation. If $(M_k, \mu_k)_{k \in \mathbb{Z}}$ is a chain complex in $\mathcal{C}$ with boundary maps $\mu_k: M_k \rightarrow M_{k-1}$ and $n \in \mathbb{Z}$, then we define $(M[n]_k, \mu[n]_k)_{k \in \mathbb{Z}}$ to be $M[n]_k := M_{k+n}$ and $\mu[n]_k := (-1)^n \mu_{k+n}$~\cite[p.~154]{gel03}, \cite[p.~9--10]{wei94}. If $(N_k, \nu_k)_{k \in \mathbb{Z}}$ denotes another chain complex, then we have constructed the hypercohomoology complex $\mathrm{Hyp}_{\mathcal{C}}(M_{\bullet}, N_{\bullet})_{\bullet}$ in Equation~\ref{eq:hypercohombd}. According to~\cite[p.~62--63]{wei94}, an $n$-cocycle of $\mathrm{Hyp}_{\mathcal{C}}(M_{\bullet}, N_{\bullet})_{\bullet}$ is exactly a chain map of the form $M[n]_{\bullet} \rightarrow N_{\bullet}$ where $n$-coboundaries are nullhomotopic chain maps of this form. In the same way, we observe that an $n$-cocycle of the Vogel complex $\mathrm{Vog}_{\mathcal{C}}(M_{\bullet}, N_{\bullet})_{\bullet}$ is a collection of morphisms $(f_{k+n}: M[n]_k \rightarrow N_k)_{k \in \mathbb{Z}}$ such that all but finitely many of the squares
\begin{center}
\begin{tikzcd}
    M_{k+n} \arrow[r, "f_{k+n}"] \arrow[d, "(-1)^n \mu_{n+k}"] & N_k \arrow[d, "\nu_k"] \\
    M_{k-1+n} \arrow[r, "f_{k-1+n}"]  & N_{k-1}
\end{tikzcd}
\end{center}
commute. We follow the convention in~\cite[p.~109]{ben92} by calling this an almost chain map of degree $n$. We call $f_{\bullet+n}: M[n]_{\bullet} \rightarrow N_{\bullet}$ nullhomotopic if there is a collection of morphisms $(e_{k+n}: M[n]_k \rightarrow N_{k+1})_{k \in \mathbb{Z}}$ such that $f_{k+n} = e_{k-1+n} \circ \mu[n]_k + \mu[n]_{k+1} \circ e_{k+n}$ for all but finitely many $k \in \mathbb{Z}$. We observe that an $n$-coboundary of $\mathrm{Vog}_{\mathcal{C}}(M_{\bullet}, N_{\bullet})_{\bullet}$ is exactly such a nullhomotopic almost chain map. Denote by $\mathrm{Hom}_{\mathrm{Ch}(\mathcal{C})}(M[n]_{\bullet}, N_{\bullet})$ the set of chain maps $M[n]_{\bullet} \rightarrow N_{\bullet}$ and by $\mathrm{Null}_{\mathrm{Ch}(\mathcal{C})}(M[n]_{\bullet}, N_{\bullet})$ the subset of nullhomotopic maps. Analogously, write $\widehat{\mathrm{Hom}}_{\mathrm{Ch}(\mathcal{C})}(M[n]_{\bullet}, N_{\bullet})$ for the set of almost chain maps $M[n]_{\bullet} \rightarrow N_{\bullet}$ and by $\widehat{\mathrm{Null}}_{\mathrm{Ch}(\mathcal{C})}(M[n]_{\bullet}, N_{\bullet})$ the subset of nullhomotopic almost chain maps. Above, we have just seen that
\begin{align*}
H^n(\mathrm{Hyp}_{\mathcal{C}}(M_{\bullet}, N_{\bullet})_{\bullet}) &= \mathrm{Hom}_{\mathrm{Ch}(\mathcal{C})}(M[n]_{\bullet}, N_{\bullet})/ \mathrm{Null}_{\mathrm{Ch}(\mathcal{C})}(M[n]_{\bullet}, N_{\bullet}) \text{ and} \\
H^n(\mathrm{Vog}_{\mathcal{C}}(M_{\bullet}, N_{\bullet})_{\bullet}) &= \widehat{\mathrm{Hom}}_{\mathrm{Ch}(\mathcal{C})}(M[n]_{\bullet}, N_{\bullet})/ \widehat{\mathrm{Null}}_{\mathrm{Ch}(\mathcal{C})}(M[n]_{\bullet}, N_{\bullet}) \, .
\end{align*}
Having reformulated the hypercohomology construction, we assume from now on that $M_{\bullet}$, $N_{\bullet}$ are projective resolutions of an object $M, N \in \mathrm{obj}(\mathcal{C})$. \\

The induced homomorphisms and connecting homomorphisms in the hypercohomology construction are modelled on the corresponding homomorphisms for Ext-functors. To substantiate this assertion, we construct an explicit isomorphism from the Ext-group $\mathrm{Ext}_{\mathcal{C}}^n(M, N)$ to the term $H^n(\mathrm{Hyp}_{\mathcal{C}}(M_{\bullet}, N_{\bullet})_{\bullet})$. It is well known that the latter these terms are isomorphic. The classical setting to prove this are derived categories as can be seen in \cite[Chapter~III]{gel03} and \cite[Sections~2.4--2.5]{wei94}. However, one cannot retrieve an explicit isomorphism from this setting. The construction of our isomorphism is similar to a construction in C.\! A.\! Weibel's book~\cite[pp.~63--64]{wei94} where he compares the chain complex $\mathrm{Hom}_{\mathcal{C}}(A_{\bullet}, B)$ to $\mathrm{Hyp}_{\mathcal{C}}(A_{\bullet}, B_{\bullet})_{\bullet, \bullet}$ seen as a double complex. As our techniques differ and we have not found this isomorphism in the literature otherwise, we provide full details. First, we need to distinguish the two emerging notions of Ext-functors whence we introduce the following notation.

\begin{notn}\label{notn:chainext}
For the remainder of this section, if $(A_k)_{k \in \mathbb{Z}}$ is a chain complex in $\mathcal{C}$, then it is assumed that $(A_k)_{k \in \mathbb{N}_0}$ is a projective resolution of an object $A$ in $\mathcal{C}$ and $A_k = 0$ for $k < 0$. We write
\[ \mathcal{E}xt_{\mathcal{C}}^n(A, B) := \mathrm{Hom}_{\mathrm{Ch}(\mathcal{C})}(A[n]_{\bullet}, B_{\bullet})/ \mathrm{Null}_{\mathrm{Ch}(\mathcal{C})}(A[n]_{\bullet}, B_{\bullet}) \]
whenever we consider the $n^{\text{th}}$ Ext-group as arising from chain maps modulo chain homotopy. Analogously, we write
\[ \widehat{\mathcal{E}xt}_{\mathcal{C}}^n(A, B) := \widehat{\mathrm{Hom}}_{\mathrm{Ch}(\mathcal{C})}(A[n]_{\bullet}, B_{\bullet})/ \widehat{\mathrm{Null}}_{\mathrm{Ch}(\mathcal{C})}(A[n]_{\bullet}, B_{\bullet}) \, . \]
If $a_k: A_k \rightarrow A_{k-1}$ denote the boundary maps of $A_{\bullet}$, then we continue to write
\[ \mathrm{Ext}_{\mathcal{C}}^n(A, B) = \mathrm{Ker}(\mathrm{Hom}_{\mathcal{C}}(a_{n+1}, B))/ \mathrm{Im}(\mathrm{Hom}_{\mathcal{C}}(a_n, B)) \]
for the $n^{\text{th}}$ Ext-group defined as the $n^{\text{th}}$ derived functor of $\mathrm{Hom}_{\mathcal{C}}(-, B)$.
\end{notn}

The required isomorphism is given by

\begin{defn}\label{defn:extisoms}
Let $A, B \in \mathrm{obj}(\mathcal{C})$, $(A_{\bullet}, a_{\bullet})$, $(B_{\bullet}, b_{\bullet})$ be projective resolutions and $n \in \mathbb{N}_0$. By Definition~\ref{defn:betaprelim} and the Comparison Theorem~\cite[Theorem~2.2.6]{wei94}, there exists for every $\varphi \in \mathrm{Ker}(\mathrm{Hom}_{\mathcal{C}}(a_{k+1}, B))$ a chain map $(\varphi[n]_k: A[n]_k \rightarrow B_k)_{k \in \mathbb{Z}}$ that is unique up to chain homotopy and that renders the diagram
\begin{equation}\label{diag:lifttochainmap}
\begin{tikzcd}
    {} \dots {} \arrow[r] & A_{n+2} \arrow[rr, "(-1)^n a_{n+2}"] \arrow[d, "\varphi{[}n{]}_2"] & & A_{n+1} \arrow[rr, "(-1)^n a_{n+1}"] \arrow[d, "\varphi{[}n{]}_1"] & & A_n \arrow[rr, "\pi_n"] \arrow[drr, "\varphi"] \arrow[d, "\varphi{[}n{]}_0" near end] & & \widetilde{A}_n \arrow[d, "\alpha_n(B)(\varphi)"] \\
    {} \dots {} \arrow[r] & B_2 \arrow[rr, "b_2"] & & B_1 \arrow[rr, "b_1"] & & B_0 \arrow[rr, "b"] & & B
\end{tikzcd}
\end{equation}
commutative. If $\varphi \in \mathrm{Im}(\mathrm{Hom}_{\mathcal{C}}(a_n, B))$, then one can deduce that $\varphi[n]_{\bullet}$ is chain homotopic to the zero chain map. Therefore, there is a well defined homomorphism
\begin{align*}
\zeta_n: \; \mathrm{Ext}_{\mathcal{C}}^n(A, B) &\rightarrow \mathcal{E}xt_{\mathcal{C}}^n(A, B) \\
\varphi + \mathrm{Im}(\mathrm{Hom}_{\mathcal{C}}(a_n, B)) &\mapsto \varphi[n]_{\bullet} + \mathrm{Null}_{\mathrm{Ch}(\mathcal{C})}(A[n]_{\bullet}, B_{\bullet}) \, .
\end{align*}
Given that Ext-functors of negative degree vanish, we set $\zeta_n := 0$ in the case where $n < 0$.
\end{defn}

The following proposition follows from the above definition and justifies the notation introduced above.

\begin{prop}
For every $n \in \mathbb{Z}$, $\zeta_n: \mathrm{Ext}^n(A, B) \rightarrow \mathcal{E}xt_{\mathcal{C}}^n(A, B)$ is an isomorphism.
\end{prop}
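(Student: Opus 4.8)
The plan is to check that $\zeta_n$ is a bijection by a direct diagram chase and then to deduce naturality in both arguments from the uniqueness-up-to-homotopy clause of the Comparison Theorem~\cite[Theorem~2.2.6]{wei94}. Since unenriched Ext-groups vanish in negative degrees on both sides and $\zeta_0$ is by construction the isomorphism $\alpha_0(B)$ followed by the identification of Notation~\ref{notn:chainext}, only the case $n \geq 1$ carries content. The structural observation I would record first is that the truncation $A[n]_{\geq 0}: \dots \to A_{n+1} \xrightarrow{a_{n+1}} A_n \to 0$, placed in homological degrees $\geq 0$, is a projective resolution of the syzygy $\widetilde{A}_n$ with augmentation $\pi_n: A_n \to \widetilde{A}_n$: exactness in positive degrees is exactness of $A_{\bullet}$, and $\mathrm{coker}(a_{n+1}) \cong \widetilde{A}_n$ via $\pi_n$. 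Because $B_k = 0$ for $k < 0$, a chain map $A[n]_{\bullet} \to B_{\bullet}$ is the same datum as a chain map $A[n]_{\geq 0} \to B_{\bullet}$, and a nullhomotopy of the former restricts to one of the latter and conversely extends by zero in negative degrees; so every comparison I need is an instance of~\cite[Theorem~2.2.6]{wei94} applied to the resolutions $A[n]_{\geq 0} \to \widetilde{A}_n$ and $B_{\bullet} \xrightarrow{b} B$.

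For injectivity, suppose $\varphi \in \mathrm{Ker}(\mathrm{Hom}_{\mathcal{C}}(a_{n+1}, B))$ and that $\varphi[n]_{\bullet}$ is nullhomotopic via $(e_k: A[n]_k \to B_{k+1})_{k \in \mathbb{Z}}$, so that in degree $0$ one has $\varphi[n]_0 = b_1 \circ e_0 + (-1)^n e_{-1} \circ a_n$, the sign coming from $a[n]_0 = (-1)^n a_n$. Composing with the augmentation $b: B_0 \to B$, using $b \circ b_1 = 0$ and $b \circ \varphi[n]_0 = \varphi$ from Diagram~\ref{diag:lifttochainmap}, yields $\varphi = (-1)^n (b \circ e_{-1}) \circ a_n \in \mathrm{Im}(\mathrm{Hom}_{\mathcal{C}}(a_n, B))$, so the class of $\varphi$ in $\mathrm{Ext}_{\mathcal{C}}^n(A,B)$ vanishes. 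For surjectivity, let $f_{\bullet}: A[n]_{\bullet} \to B_{\bullet}$ be a chain map and put $\varphi := b \circ f_0: A_n \to B$; the chain-map identity $f_0 \circ a[n]_1 = b_1 \circ f_1$ together with $b \circ b_1 = 0$ gives $\varphi \circ a_{n+1} = 0$, so $\varphi$ represents a class of $\mathrm{Ext}_{\mathcal{C}}^n(A,B)$ with associated morphism $\overline{\varphi} = \alpha_n(B)(\varphi): \widetilde{A}_n \to B$ as in Diagram~\ref{diag:triangle}. Both $\varphi[n]_{\bullet}$ and $f_{\bullet}$ are then chain maps $A[n]_{\geq 0} \to B_{\bullet}$ whose degree-$0$ component composes with $b$ to $\overline{\varphi} \circ \pi_n$; by uniqueness of lifts they are chain homotopic, whence $\zeta_n$ sends the class of $\varphi$ to the class of $f_{\bullet}$.

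For naturality in the second variable, given $f: B \to C$ lift it to a chain map $f_{\bullet}: B_{\bullet} \to C_{\bullet}$ with $c \circ f_0 = f \circ b$. For $\varphi \in \mathrm{Ker}(\mathrm{Hom}_{\mathcal{C}}(a_{n+1}, B))$ the chain map $f_{\bullet} \circ \varphi[n]_{\bullet}: A[n]_{\bullet} \to C_{\bullet}$ satisfies $c \circ (f_{\bullet} \circ \varphi[n]_{\bullet})_0 = f \circ b \circ \varphi[n]_0 = f \circ \varphi$, so it is a lift of $f\varphi$ and hence chain homotopic to $(f\varphi)[n]_{\bullet}$; this is precisely the equality $\mathcal{E}xt_{\mathcal{C}}^n(A,f)\big(\zeta_n(\varphi)\big) = \zeta_n\big(\mathrm{Ext}_{\mathcal{C}}^n(A,f)(\varphi)\big)$. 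Naturality in the first variable is the mirror image with precomposition: for $g: A' \to A$ lifted to $g_{\bullet}: A'_{\bullet} \to A_{\bullet}$, the chain map $\varphi[n]_{\bullet} \circ g[n]_{\bullet}: A'[n]_{\bullet} \to B_{\bullet}$ has degree-$0$ component composing with $b$ to $\varphi \circ g_n$, hence lifts $\varphi g_n$ and is chain homotopic to $(\varphi g_n)[n]_{\bullet}$, giving $\mathcal{E}xt_{\mathcal{C}}^n(g,B)\big(\zeta_n(\varphi)\big) = \zeta_n\big(\mathrm{Ext}_{\mathcal{C}}^n(g,B)(\varphi)\big)$. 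That all the induced maps in sight are well defined independently of the chosen resolutions and lifts is Proposition~\ref{prop:hyperbifunctor}, so the two naturality squares commute.

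I expect the main obstacle to be the bookkeeping around the Comparison Theorem: one has to recognize the shifted truncation $A[n]_{\geq 0}$ as a projective resolution of the syzygy $\widetilde{A}_n$, verify that each chain map into $B_{\bullet}$ occurring in the argument covers the expected morphism of resolved objects so that the uniqueness clause applies, and keep careful track of the sign $(-1)^n$ introduced by the shift $[n]$ in the degree-$0$ (co)homotopy identities; once these are in place the chases themselves are routine.
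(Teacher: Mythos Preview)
Your proposal is correct and follows essentially the same approach as the paper: both argue injectivity by reading off $\varphi = (-1)^n(b\circ e_{-1})\circ a_n$ from the degree-$0$ homotopy identity, surjectivity by setting $\varphi := b\circ f_0$ and invoking uniqueness of lifts, and naturality by observing that the composites $f_{\bullet}\circ\varphi[n]_{\bullet}$ and $\varphi[n]_{\bullet}\circ g[n]_{\bullet}$ lift the expected morphisms. Your framing via the Comparison Theorem applied to the resolution $A[n]_{\geq 0}\to\widetilde{A}_n$ makes explicit what the paper leaves implicit in Diagram~\ref{diag:lifttochainmap}, but the substance is identical.
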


With this notation at hand, we define the induced homomorphisms for the hypercohomology construction. Using the above isomorphism, we shall see in the next subsection how these induced homomorphisms are a reformulation of the corresponding homomorphisms in Ext-functors.

\begin{defn}\label{defn:extchainmaps}
Let $f: B \rightarrow C$ and $g: C \rightarrow A$ be morphisms in $\mathcal{C}$. Assume that $A_{\bullet}$, $B_{\bullet}$, $C_{\bullet}$ are projective resolutions. By the Comparison Theorem~\cite[Theorem~2.2.6]{wei94}, there is a chain map $f_{\bullet}: B_{\bullet} \rightarrow C_{\bullet}$ that is unique up to chain homotopy and that renders the diagram
\begin{center}
\begin{tikzcd}
    {} \dots {} \arrow[r] & B_2 \arrow[r] \arrow[d, "f_2"] & B_1 \arrow[r] \arrow[d, "f_1"] & B_0 \arrow[r] \arrow[d, "f_0"] & B \arrow[d, "f"] \\
    {} \dots {} \arrow[r] & C_2 \arrow[r] & C_1 \arrow[r] & C_0 \arrow[r] & C
\end{tikzcd}
\end{center}
commutative. Then the homomorphisms
\begin{align*}
\widehat{\mathcal{E}xt}_{\mathcal{C}}^n(A, f): \;  \widehat{\mathcal{E}xt}_{\mathcal{C}}^n(A, B) &\rightarrow \widehat{\mathcal{E}xt}_{\mathcal{C}}^n(A, C) \\
\varphi_{\bullet} + \widehat{\mathrm{Null}}_{\mathrm{Ch}(\mathcal{C})}(A[n]_{\bullet}, B_{\bullet}) &\mapsto f_{\bullet} \circ \varphi_{\bullet} + \widehat{\mathrm{Null}}_{\mathrm{Ch}(\mathcal{C})}(A[n]_{\bullet}, C_{\bullet})
\end{align*}
and
\begin{align*}
\widehat{\mathcal{E}xt}_{\mathcal{C}}^n(g, B): \;  \widehat{\mathcal{E}xt}_{\mathcal{C}}^n(A, B) &\rightarrow \widehat{\mathcal{E}xt}_{\mathcal{C}}^n(C, B) \\
\varphi_{\bullet} + \widehat{\mathrm{Null}}_{\mathrm{Ch}(\mathcal{C})}(A[n]_{\bullet}, B_{\bullet}) &\mapsto \varphi_{\bullet} \circ g[n]_{\bullet} + \widehat{\mathrm{Null}}_{\mathrm{Ch}(\mathcal{C})}(C[n]_{\bullet}, B_{\bullet}) \, .
\end{align*}
are the induced homomorphisms of the hypercohomology construction. The induced homomorphisms
\[ \mathcal{E}xt_{\mathcal{C}}^n(A, f): \mathcal{E}xt_{\mathcal{C}}^n(A, B) \rightarrow \mathcal{E}xt_{\mathcal{C}}^n(A, C) \text{ and } \mathcal{E}xt_{\mathcal{C}}^n(g, B): \mathcal{E}xt_{\mathcal{C}}^n(A, B) \rightarrow \mathcal{E}xt_{\mathcal{C}}^n(C, B) \]
are defined analogously.
\end{defn}

We deduce from this definition

\begin{prop}\label{prop:hyperbifunctor}
For any $n \in \mathbb{Z}$, both
\[ \mathcal{E}xt_{\mathcal{C}}^n(-, -): \mathcal{C}^{\mathrm{op}} \times \mathcal{C} \rightarrow \mathbf{Ab} \text{ and } \, \widehat{\mathcal{E}xt}_{\mathcal{C}}^n(-, -): \mathcal{C}^{\mathrm{op}} \times \mathcal{C} \rightarrow \mathbf{Ab} \]
form well defined bifunctors.
\end{prop}

Finally, we define connecting homomorphisms of the hypercohomology construction.

\begin{defn}\label{defn:hyperconnmorph}
Let $0 \rightarrow B \rightarrow C \rightarrow D \rightarrow 0$ and $0 \rightarrow \widetilde{D}_1 \rightarrow D_0 \rightarrow D \rightarrow 0$ be short exact sequences in $\mathcal{C}$ where $D_0$ is projective.Let $(D_{\bullet}, d_{\bullet})$ be a projective resolution of $D$. Define the chain complex $(D_{\bullet}^{+1}, d_{\bullet}^{+1})$ by setting $(D_m^{+1}, d_m^{+1})_{m \in \mathbb{N}} := (D_{m+1}, d_{m+1})_{m \in \mathbb{N}}$, $D_0^{+1} := D_1$ and $D_m^{+1} := 0$ for $m < 0$. \\

Note that $D_{\bullet}^{+1}$ is a projective resolution of $\widetilde{D}_1$. Define the chain map $\pi_{\bullet}^n: D_{\bullet} \rightarrow D^{+1}[-1]_{\bullet}$ by setting
\begin{equation}
\pi_m^n := \begin{cases} (-1)^{m-1+n}\mathrm{id}_{D_m}: D_m \rightarrow D_m &\text{if } m \geq 1 \\ {} \; \; 0 &\text{if } m \leq 0 \end{cases}
\end{equation}
According to~\cite[p.~166]{gel03},
\[ \mathcal{E}xt_{\mathcal{C}}^{n+1}(A, D) = \mathrm{Hom}_{\mathrm{Ch}(\mathcal{C})}(A[n]_{\bullet}, D[-1]_{\bullet})/ \mathrm{Null}_{\mathrm{Ch}(\mathcal{C})}(A[n]_{\bullet}, D[-1]_{\bullet}) \]
where an analogous statement holds true for $\widehat{\mathcal{E}xt}_{\mathcal{C}}^{n+1}(A, D)$. Then
\begin{align*}
\Delta^n: \mathcal{E}xt_{\mathcal{C}}^n(A, D) &\rightarrow \mathcal{E}xt_{\mathcal{C}}^{n+1}(A, \widetilde{D}_1) \\
\varphi_{\bullet+n}+\mathrm{Null}_{\mathrm{Ch}(\mathcal{C})}(A[n]_{\bullet}, D_{\bullet}) &\mapsto \pi_{\bullet}^n \circ \varphi_{\bullet+n}+\mathrm{Null}_{\mathrm{Ch}(\mathcal{C})}(A[n]_{\bullet}, D^{+1}[-1]_{\bullet})
\end{align*}
is the connecting homomorphism associated to $0 \rightarrow \widetilde{D}_1 \rightarrow D_0 \rightarrow D \rightarrow 0$. Define the connecting homomorphism $\widehat{\Delta}^n: \widehat{\mathcal{E}xt}_{\mathcal{C}}^n(A, D) \rightarrow \widehat{\mathcal{E}xt}_{\mathcal{C}}^{n+1}(A, \widetilde{D}_1)$ analogously. \\

If $h^{\ast}: \widetilde{D}_1 \rightarrow B$ is a lift of $\mathrm{id}_D: D \rightarrow D$ as in Diagram~\ref{diag:delta} and $h^{\ast}_{\bullet+1}: D_{\bullet}^{+1} \rightarrow B_{\bullet}$ a chain map as in the Comparison Theorem, then
\begin{align*}
\Delta^n: \mathcal{E}xt_{\mathcal{C}}^n(A, D) &\rightarrow \mathcal{E}xt_{\mathcal{C}}^{n+1}(A, B) \\
\varphi_{\bullet+n}+\mathrm{Null}_{\mathrm{Ch}(\mathcal{C})}(A[n]_{\bullet}, D_{\bullet}) &\mapsto h^{\ast}[-1]_{\bullet+1} \circ \pi_{\bullet}^n \circ \varphi_{\bullet+n}+\mathrm{Null}_{\mathrm{Ch}(\mathcal{C})}(A[n]_{\bullet}, B[-1]_{\bullet})
\end{align*}
is the connecting homomorphism associated to $0 \rightarrow B \rightarrow C \rightarrow D \rightarrow 0$. Define the connecting homomorphism $\widehat{\Delta}^n: \widehat{\mathcal{E}xt}_{\mathcal{C}}^n(A, D) \rightarrow \widehat{\mathcal{E}xt}_{\mathcal{C}}^{n+1}(A, B)$ analogously.
\end{defn}

\subsection{Termwise isomorphisms}

We prove that the Ext-groups $\mathcal{E}xt_{\mathcal{C}}^n(A, B)$ defined through chain maps form cohomological functors as a step in constructing an isomorphism from the terms of the resolution construction to the ones of the hypercohomology construction.

\begin{lem}\label{lem:ordinaryhypercohom}
The isomorphisms $\zeta_{\bullet}: (\mathrm{Ext}_{\mathcal{C}}^{\bullet}(A, -), \delta^{\bullet}) \rightarrow (\mathcal{E}xt_{\mathcal{C}}^{\bullet}(A, -), \Delta^{\bullet})$ from Definition~\ref{defn:extisoms} form an isomorphism of cohomological functors. In particular, $(\mathcal{E}xt_{\mathcal{C}}^{\bullet}(A, -), \Delta^{\bullet})$ forms a cohomological functor.
\end{lem}

\begin{proof}
It follows from Definition~\ref{defn:extchainmaps} that the isomorphisms $\zeta_n: \mathrm{Ext}_{\mathcal{C}}^n(A, -) \rightarrow \mathcal{E}xt_{\mathcal{C}}^n(A, -)$ are natural. Hence, it suffices to prove that $\zeta_n$ commutes with the connecting homomorphisms. Let $0 \rightarrow B \rightarrow C \rightarrow D \rightarrow 0$ and $0 \rightarrow \widetilde{D}_1 \rightarrow D_0 \rightarrow D \rightarrow 0$ be short exact sequences in $\mathcal{C}$ where $D_0$ is projective. If $h^{\ast}: \widetilde{D}_1 \rightarrow B$ denotes a lift of $\mathrm{id}_D$ as in Definition~\ref{defn:hyperconnmorph}, then by Diagram~\ref{diag:liftfactoring} the connecting homomorphism associated to the former sequence factors through the one associated to the latter as
\[ \delta^n: \mathrm{Ext}_{\mathcal{C}}^n(A, D) \xrightarrow{\delta^n} \mathrm{Ext}_{\mathcal{C}}^{n+1}(A, \widetilde{D}_1) \xrightarrow{\mathrm{Ext}_{\mathcal{C}}^{n+1}(A, h^{\ast})} \mathrm{Ext}_{\mathcal{C}}^{n+1}(A, B) \, . \]
Thus, the left hand triangle in the diagram
\begin{equation}\label{diag:trapezia}
\begin{tikzcd}
  \mathrm{Ext}_{\mathcal{C}}^n(A{,} \, D) \arrow[rrr, "\zeta_n"] \arrow[dr, "\delta^n"] \arrow[dd, "\delta^n"]
    & & & \mathcal{E}xt_{\mathcal{C}}^n(A{,} \, D) \arrow[dl, "\Delta^n"] \arrow[dd, shift left = 2mm, "\Delta^n"] \\
  & \mathrm{Ext}_{\mathcal{C}}^{n+1}(A{,} \, \widetilde{D}_1) \arrow[r, "\zeta_{n+1}"] \arrow[dl, "\mathrm{Ext}_{\mathcal{C}}^{n+1}(A{,} \, h^{\ast})" near start]
    & \mathcal{E}xt_{\mathcal{C}}^{n+1}(A{,} \, \widetilde{D}_1) \arrow[dr, "\mathcal{E}xt_{\mathcal{C}}^{n+1}(A{,} \, h^{\ast})" near start] & \\
  \mathrm{Ext}_{\mathcal{C}}^{n+1}(A{,} \, B) \arrow[rrr, "\zeta_{n+1}"]
    & & & \mathcal{E}xt_{\mathcal{C}}^{n+1}(A{,} \, B)
\end{tikzcd}
\end{equation}
commutes. The bottom trapezium is commutes and the right hand triangle triangle, too, by Definition~\ref{defn:extchainmaps} and Definition~\ref{defn:hyperconnmorph}. The top trapzium can be seen as a generalisation of \cite[Proposition~1.1]{emm17} where we use a different method to prove that it commutes. Let $\varphi \in \mathrm{Ker}(\mathrm{Hom}_{\mathcal{C}}(a_n, D))$. We explain how one can combine Diagram~\ref{diag:mislinbenson} and Diagram~\ref{diag:lifttochainmap} as
\begin{equation}\label{diag:zetanddelta}
\begin{tikzcd}
    {} \dots {} \arrow[r]
    & A_{n+2} \arrow[rr, "(-1)^n a_{n+2}"] \arrow[d, "\varphi{[}n{]}_2"]
    & & A_{n+1} \arrow[r, "\pi_{n+1}" near end] \arrow[rr, bend left = 26, "a_{n+1}"] \arrow[d, "\varphi{[}n{]}_1" near end] \arrow[dr, "\Phi"]
    & \widetilde{A}_{n+1} \arrow[r, "\iota_{n+1}" near start] \arrow[d]
    & A_n \arrow[r, "\pi_n"] \arrow[dr, bend left = 15, "\varphi"] \arrow[d, "\varphi{[}n{]}_0"]
    & \widetilde{A}_n \arrow[d, "\alpha_{n+k}(B)(\varphi)"] \\
    {} \dots {} \arrow[r]
    & D_2 \arrow[rr, "d_2"]
    & & D_1 \arrow[r, "\pi_1"] \arrow[rr, bend right = 28, "d_1"]
    & \widetilde{D}_1 \arrow[r, "\iota_1"]
    & D_0 \arrow[r, "\pi_0"]
    & D
\end{tikzcd}
\end{equation}
The right most morphisms up to $\Phi: A_{n+1} \rightarrow \widetilde{B}_1$ are taken from Diagram~\ref{diag:mislinbenson} while $\varphi: A_n \rightarrow D$ and $\varphi[n]_{\bullet}: A[n]_{\bullet} \rightarrow D_{\bullet}$ are the morphisms as they occur in Diagram~\ref{diag:lifttochainmap}. In particular,
\[ \delta^n \big(\varphi+\mathrm{Im}(\mathrm{Hom}_{\mathcal{C}}(a_n, D))\big) = \Phi+\big(\varphi+\mathrm{Im}(\mathrm{Hom}_{\mathcal{C}}(a_n, \widetilde{D}_1)) \, . \]
However, $\iota_{n+1} \circ \pi_{n+1} = a_{n+1}$ might not agree with $(-1)^n a_{n+1}: A_{n+1} \rightarrow A_n$. This together Diagram~\ref{diag:mislinbenson} and the Comparison from~\cite[p.~36]{wei94} implies that $\Phi = (-1)^n \pi_1 \circ \varphi[n]_1$. Therefore, the chain map $\Phi[n+1]_{\bullet}: A[n+1] \rightarrow \widetilde{D}_{\bullet}^{+1}$ defined by
\[ \Phi[n+1]_m := \begin{cases} (-1)^{n+m} \varphi[n]_{m+1}: A_{m+n+1} \rightarrow D_{m+1} &\text{if } m \geq 0 \\ \; \; 0 &\text{if } m < 0 \end{cases} \]
is a lift of $\Phi$ as in Diagram~\ref{diag:lifttochainmap}. The alternating signs are due to the fact that the boundary maps of $A[n]_{\bullet}$ and $A[n+1]$ have opposite signs. We conclude by Diagram~\ref{diag:zetanddelta} and Definition~\ref{defn:hyperconnmorph} that
\[ \zeta_{n+1}\Big(\delta^n \big(\varphi+\mathrm{Im}(\mathrm{Hom}_{\mathcal{C}}(a_n, D))\big)\Big) = \Delta^n \Big(\zeta^n \big(\varphi+\mathrm{Im}(\mathrm{Hom}_{\mathcal{C}}(a_n, D))\big)\Big) \, . \]
The top trapezium and thus all of Diagram~\ref{diag:trapezia} commutes, proving that $\zeta_{\bullet}$ commutes with the connecting homomorphisms.
\end{proof}

We reformulate the resolution construction in terms of direct limit of chain maps as another step in constructing an isomorphism from the terms of the resolution construction to the ones of the hypercohomology construction.

\begin{defn}
Let $B \in \mathrm{obj}(\mathcal{C})$ and denote by $B_{\bullet}$ a projective resolution. For any $n \in \mathbb{Z}$ define
\[ \mathcal{E}xt_{Res, \mathcal{C}}^n(A, B) := \varinjlim_{k \in \mathbb{N}_0} (\mathcal{E}xt_{\mathcal{C}}^{n+k}(A, \widetilde{B}_k), \Delta^{n+k}) \, . \]
Let $f: B \rightarrow C$ be a morphism in $\mathcal{C}$ and $(\widetilde{f}_k: \widetilde{B}_k \rightarrow \widetilde{C}_k)_{k \in \mathbb{N}_0}$ be a sequence of morphisms as in Definition~\ref{defn:resolindmorph}. Then define the induced homomorphism by
\[ \mathcal{E}xt_{Res, \mathcal{C}}^n(A, f) := \varinjlim_{k \in \mathbb{N}_0} \mathcal{E}xt_{\mathcal{C}}^{n+k}(A, \widetilde{f}_k): \mathcal{E}xt_{\mathcal{C}}^n(A, B) \rightarrow \mathcal{E}xt_{\mathcal{C}}^n(A, C) \, . \]
Let $0 \rightarrow B \rightarrow C \rightarrow D \rightarrow 0$ be a short exact sequence and $(0 \rightarrow \widetilde{A}_k \rightarrow \widetilde{B}_k \rightarrow \widetilde{C}_k \rightarrow 0)_{k \in \mathbb{N}_0}$ be a sequence of short exact sequences as in Definition~\ref{defn:resolconnmorph}. Then define the connecting homomorphism by
\[ \widetilde{\Delta}^n := \varinjlim_{k \in \mathbb{N}_0} (-1)^k \Delta^{n+k}: \mathcal{E}xt_{Res, \mathcal{C}}^n(A, D) \rightarrow \mathcal{E}xt_{Res, \mathcal{C}}^{n+1}(A, B) \]
where $\Delta^{n+k}: \mathcal{E}xt_{\mathcal{C}}^{n+k}(A, \widetilde{D}_k) \rightarrow \mathcal{E}xt_{\mathcal{C}}^{n+k+1}(A, \widetilde{B}_k)$ are taken as in Diagram~\ref{diag:anticomm}.
\end{defn}

\begin{defn}\label{defn:resolinchains}
By virtue of the isomorphisms of cohomological functors 
\[ \zeta_{n+k}: \mathrm{Ext}_{\mathcal{C}}^{n+k}(A, -) \rightarrow \mathcal{E}xt_{\mathcal{C}}^{n+k}(A, -) \]
from Lemma~\ref{lem:ordinaryhypercohom}, define the natural isomorphisms
\[ \zeta^n := \varinjlim_{k \in \mathbb{N}_0} \zeta_{n+k}: \mathrm{Ext}_{Res, \mathcal{C}}^n(A, -) \rightarrow \mathcal{E}xt_{Res, \mathcal{C}}^n(A, -) \, . \]
Then
\[ \zeta^{\bullet}: (\mathrm{Ext}_{Res, \mathcal{C}}^{\bullet}(A, -), \widetilde{\delta}^{\bullet}) \rightarrow (\mathcal{E}xt_{Res, \mathcal{C}}^{\bullet}(A, -), \widetilde{\Delta}^{\bullet}) \]
forms an isomorphism of cohomological functors.
\end{defn}

We introduce the following terminology in order ease notation when constructing a homomorphism from $\mathcal{E}xt_{Res, \mathcal{C}}^n(A, B)$ to $\widehat{\mathcal{E}xt}_{\mathcal{C}}^n(A, B)$.

\begin{conv}
For the remainder of this section, we consider the cofinal system
\[ (\mathcal{E}xt_{\mathcal{C}}^{n+2k}(A, \widetilde{B}_{2k}), \Delta^{n+2k+1} \circ \Delta^{n+2k})_{k \in \mathbb{N}_0} \]
and specifically form $\mathcal{E}xt_{Res, \mathcal{C}}^n(A, B)$ as its direct limit.
\end{conv}

\begin{notn}
If $(B_{\bullet}, b_{\bullet})$ is a projective resolution of $B$ and $k \in \mathbb{N}_0$, then we define the projective resolution $(B_{\bullet}^{+k}, b_{\bullet}^{+k})_{m \in \mathbb{Z}}$ of $\widetilde{B}_k$ by setting $(B_m^{+k}, b_m^{+k})_{m \in \mathbb{N}} := (B_{m+k}, b_{m+k})_{m \in \mathbb{N}}$, $B_0^{+k} := B_k$ and $B_m^{+k} := 0$ for $m < 0$.
\end{notn}

\begin{lem}\label{lem:vogelresol}
There is an isomorphism $\vartheta^n(B): \mathcal{E}xt_{Res, \mathcal{C}}^n(A, B) \rightarrow \widehat{\mathcal{E}xt}_{\mathcal{C}}^n(A, B)$ for any $B \in \mathrm{obj}(\mathcal{C})$ and $n \in \mathbb{Z}$.
\end{lem}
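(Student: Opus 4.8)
The plan is to realise the direct system defining $\mathcal{E}xt_{Res,\mathcal{C}}^n(A,B)$ termwise inside the Vogel complex and then pass to the limit, exploiting the fact that an $(n{+}k)$-cocycle of the hypercohomology complex computing $\mathcal{E}xt_{\mathcal{C}}^{n+k}(A,\widetilde B_k)$ is, by Subsection~\ref{subsec:chainmaps}, precisely a chain map $A[n{+}k]_{\bullet}\to (B^{+k})_{\bullet}$, and that such a chain map can be glued onto a fixed projective resolution $B_{\bullet}$ of $B$ to produce an \emph{almost} chain map $A[n]_{\bullet}\to B_{\bullet}$. Concretely, given $\varphi_{\bullet}\colon A[n{+}k]_{\bullet}\to (B^{+k})_{\bullet}$ representing a class in $\mathcal{E}xt_{\mathcal{C}}^{n+k}(A,\widetilde B_k)$, one defines $\widehat\varphi_{\bullet}\colon A[n]_{\bullet}\to B_{\bullet}$ by $\widehat\varphi_{j} := \varphi_{j-k}\colon A_{j+n}\to B_{j}$ for $j\geq k$ (up to a uniform sign depending on $n$, $k$ to reconcile the shifted differentials $\mu[n{+}k]$ versus $\mu[n]$), and $\widehat\varphi_j := 0$ for $j<k$. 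All squares of $\widehat\varphi_{\bullet}$ commute for $j>k$, and only the finitely many squares in degrees $j\le k$ can fail, so $\widehat\varphi_{\bullet}$ is an almost chain map of degree $n$. This gives a homomorphism $\mathcal{E}xt_{\mathcal{C}}^{n+k}(A,\widetilde B_k)\to \widehat{\mathcal{E}xt}_{\mathcal{C}}^n(A,B)$ on the level of cochains, and one checks it kills nullhomotopic chain maps by extending a homotopy in the same degree-shifted fashion.

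Next I would verify compatibility with the transition maps $\Delta^{n+k}$ of the direct system. Unwinding Definition~\ref{defn:hyperconnmorph}, the map $\mathcal{E}xt_{\mathcal{C}}^{n+k}(A,\widetilde B_k)\to\mathcal{E}xt_{\mathcal{C}}^{n+k+1}(A,\widetilde B_{k+1})$ is, up to the alternating sign $(-1)^k$ appearing in $\widetilde\Delta$, induced by post-composition with the chain map $\pi_{\bullet}^{n+k}\colon (B^{+k})_{\bullet}\to (B^{+k+1})[-1]_{\bullet}$ which in each positive degree is $\pm\mathrm{id}$. Because gluing onto $B_{\bullet}$ simply re-indexes, the two ways of building an almost chain map $A[n]_{\bullet}\to B_{\bullet}$ — first apply $\Delta^{n+k}$ then glue at level $k{+}1$, versus glue at level $k$ directly — differ only in finitely many low-degree terms, hence represent the same class in $\widehat{\mathcal{E}xt}_{\mathcal{C}}^n(A,B)$ (here the signs must be bookkept carefully: the $(-1)^k$ twist in $\widetilde\Delta^n$ is exactly what is needed to match the sign discrepancy between $\mu[n{+}k]$ and $\mu[n]$, analogous to the role of the alternating signs in Definition~\ref{defn:resolconnmorph} and Lemma~\ref{lem:naiveconnmorph}). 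By the universal property of the direct limit this yields the desired $\vartheta^n(B)\colon \mathcal{E}xt_{Res,\mathcal{C}}^n(A,B)\to\widehat{\mathcal{E}xt}_{\mathcal{C}}^n(A,B)$.

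It remains to prove $\vartheta^n(B)$ is an isomorphism. For surjectivity: given an almost chain map $\psi_{\bullet}\colon A[n]_{\bullet}\to B_{\bullet}$, it commutes in all but finitely many squares, so there is $k$ such that every square in degree $\geq k$ commutes; truncating and re-indexing, $(\psi_{j})_{j\ge k}$ is an honest chain map $A[n{+}k]_{\bullet}\to (B^{+k})_{\bullet}$ whose image under $\vartheta^n(B)$ differs from $\psi_{\bullet}$ only in finitely many degrees, hence equals $[\psi_{\bullet}]$. For injectivity I would use Proposition~\ref{prop:liminab}: a class in the kernel is represented at some finite stage by a chain map $\varphi_{\bullet}\colon A[n{+}k]_{\bullet}\to(B^{+k})_{\bullet}$ whose glued almost chain map is nullhomotopic via some $(e_j)$; since $e_j$ can fail the homotopy relation only in finitely many degrees, raising $k$ further makes $(e_j)_{j\ge k}$ an honest chain homotopy, so $\varphi_{\bullet}$ already represents $0$ in $\mathcal{E}xt_{\mathcal{C}}^{n+k}(A,\widetilde B_k)$, hence the original class is $0$. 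Naturality in $B$ (which I expect will be recorded in a following proposition together with the connecting-homomorphism compatibility) follows because the gluing construction is manifestly functorial: a lift $\widetilde f_{\bullet}$ as in Proposition~\ref{prop:omeganatural} glues to a chain map $B_{\bullet}\to C_{\bullet}$ compatibly across all $k$.

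The main obstacle I anticipate is the sign bookkeeping: one must pin down exactly which sign to insert in the definition of $\widehat\varphi_{\bullet}$ so that (i) it is genuinely an almost chain map and (ii) the gluing commutes with the $(-1)^k$-twisted transition maps $\widetilde\Delta^{n}$ rather than with some other sign convention. This is the same kind of delicate alternating-sign issue that forced the $(-1)^k$ in Definition~\ref{defn:resolconnmorph} and in Lemma~\ref{lem:naiveconnmorph}, and getting it wrong would make $\vartheta^n$ fail to be well defined on the limit even though it is fine termwise. Everything else — the exactness of direct limits in $\mathbf{Ab}$, the almost-chain-map description of the Vogel cohomology, the cofinality of the even-indexed subsystem — is already available from the excerpt and used only routinely.
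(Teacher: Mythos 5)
Your overall strategy is the same as the paper's: glue a chain map representative of a class in $\mathcal{E}xt_{\mathcal{C}}^{n+k}(A,\widetilde B_k)$ onto the fixed resolution $B_{\bullet}$ by declaring it zero in low degrees, show the resulting almost chain maps are compatible across the direct system, and then verify surjectivity and injectivity (via Proposition~\ref{prop:liminab}). Your surjectivity and injectivity arguments are essentially what the paper does.

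However, the way you propose to handle the transition maps contains a genuine gap, and it is precisely the sign issue that you flag as the ``main obstacle.'' First, a point of confusion: you repeatedly invoke ``the $(-1)^k$ appearing in $\widetilde\Delta$'' as if it were part of the transition maps of the direct system defining $\mathcal{E}xt_{Res,\mathcal{C}}^n(A,B)$. It is not. By Definition~\ref{defn:chainresol} the direct system is $(\mathcal{E}xt_{\mathcal{C}}^{n+k}(A,\widetilde B_k),\ \Delta^{n+k})$, and $\Delta^{n+k}$ carries no $(-1)^k$; the $(-1)^k$ only enters in $\widetilde\Delta^n$, the connecting homomorphism for a short exact sequence $0\to B\to C\to D\to 0$ in $\mathcal{C}$, which plays no role in the definition of the abelian group $\mathcal{E}xt_{Res,\mathcal{C}}^n(A,B)$. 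So the sign you hope will ``match the discrepancy'' is simply not present in the relevant maps.

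Second, the ``uniform sign depending on $n$, $k$'' you posit for the gluing cannot work as stated. Writing out the almost-chain-map condition for $\widehat\varphi_{j} := c_j\,\varphi_{j-k}$ against the chain-map relation $b_{j}\circ\varphi_{j-k} = (-1)^{n+k}\varphi_{j-k-1}\circ a_{j+n}$, you are forced into $c_j/c_{j-1} = (-1)^k$. For odd $k$ this means the correction sign must alternate with the degree $j$, i.e.\ it is not uniform in the sense you appear to intend. One can salvage this with a $j$-dependent sign convention, but then one must re-verify compatibility with $\Delta^{n+k}$ (and with the re-indexing $D^{+1}[-1]_{\bullet}\leftrightarrow D^{+1}_{\bullet}$, which introduces its own sign), and nothing in your sketch does that bookkeeping.

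The paper's actual device is a clean way to sidestep all of this: it restricts to the cofinal subsystem $(\mathcal{E}xt_{\mathcal{C}}^{n+2k}(A,\widetilde B_{2k}),\ \Delta^{n+2k+1}\circ\Delta^{n+2k})$, observes that $\pi^{n+1}[-1]_m\circ\pi_m^n = (-1)^{(m-1)-1+(n+1)}\cdot(-1)^{m-1+n}\,\mathrm{id} = \mathrm{id}$ for $m\geq 2$, and thereby gets transition maps whose chain-level representatives are literally the identity in all but finitely many degrees. With that in place the gluing with $c\equiv 1$ is sign-free, the triangles $\vartheta_n^{2k} = \vartheta_n^{2k+2}\circ\Delta^{n+2k+1}\circ\Delta^{n+2k}$ commute on the nose, and no alternating-sign normalisation is required. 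I would suggest either adopting that even-indexed cofinal system outright or carrying out the $j$-dependent sign computation you alluded to in full; as currently written the crucial compatibility step is not established, and the reasoning you offer for it points to the wrong source of signs.
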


\begin{proof}
For any $k \in \mathbb{N}_0$ we construct a homomorphism
\begin{align*}
\vartheta_{n}^{2k}(B): \mathcal{E}xt_{\mathcal{C}}^{n+2k}(A, \widetilde{B}_{2k}) &\rightarrow \widehat{\mathcal{E}xt}_{\mathcal{C}}^n(A, B) \\
\varphi_{\bullet+n+2k} + \mathrm{Null}_{\mathrm{Ch}(\mathcal{C})}(A[n+2k]_{\bullet}, B_{\bullet}^{+2k}) &\mapsto \Phi_{\bullet+n} + \widehat{\mathrm{Null}}_{\mathrm{Ch}(\mathcal{C})}(A[n]_{\bullet}, B_{\bullet})
\end{align*}
as follows. For a representative $\varphi_{\bullet+n+2k} \in \mathrm{Hom}_{\mathrm{Ch}(\mathcal{C})}(A[n+2k]_{\bullet}, B_{\bullet}^{+2k})$ of an element in $\mathcal{E}xt_{\mathcal{C}}^{n+2k}(A, \widetilde{B}_{2k})$ we define $\Phi_{\bullet+n}: A[n]_{\bullet} \rightarrow B_{\bullet}$ by
\[ \Phi_{m+n} := \begin{cases} \varphi_{m+n}: A_{m+n} \rightarrow B_m &\text{if } m \geq 2k \\ \; \; 0 &\text{if } m < 2k \end{cases} \]
As the boundary maps of $A[n]$ and $A[n+2k]$ agree modulo a shift of $2k$ in their indices, all squares of the form
\begin{center}
\begin{tikzcd}
  A_{m+n} \arrow[rr, "\Phi_{m+n}"] \arrow[d, "(-1)^n a_{m+n}"]
    & & B_m \arrow[d, "b_m"] \\
  A_{m-1+n} \arrow[rr, "\Phi_{n+m-1}"]
    & & B_{m-1}
\end{tikzcd}
\end{center}
commute for $m \in \mathbb{Z} \setminus \lbrace 2k \rbrace$. We conclude that $\Phi_{\bullet+n} \in \widehat{\mathrm{Hom}}_{\mathrm{Ch}(\mathcal{C})}(A[n]_{\bullet}, B_{\bullet})$. If the chain map $\varphi_{\bullet+n+2k}$ is nullhomotopic, then $\Phi_{\bullet+n}$ is genuinely nullhomotopic in any degree $m \in \mathbb{Z} \setminus \lbrace 2k-1 \rbrace$, hence nullhomotopic as an almost chain map. Thus, $\vartheta_{n}^{2k}$ is a well defined homomorphism. \\

By definition of $\vartheta_{n}^{2k}$ and by Definition~\ref{defn:hyperconnmorph}, the triangle
\begin{equation}\label{diag:thetadeltatriangle}
\begin{tikzcd}
  \mathcal{E}xt_{\mathcal{C}}^{n+2k}(A{,} \, \widetilde{B}_{2k}) \arrow[ddrr, "\vartheta_{n}^{2k}(B)"] \arrow[dddd, shift right = 6mm, "\Delta^{n+2k+1} \circ \Delta^{n+2k}"] & & {} \\ \\
  {} & & \widehat{\mathcal{E}xt}_{\mathcal{C}}^n(A{,} \, B) \\ \\
  \mathcal{E}xt_{\mathcal{C}}^{n+2k+2}(A{,} \, \widetilde{B}_{2k+2)}) \arrow[uurr, "\vartheta_{n}^{2k+2)}(B)" near end] & & {}
\end{tikzcd}
\end{equation}
commutes. We write the resulting homomorphism in the direct limit of the above triangles as
\[ \vartheta^n(B) := \varinjlim_{k \in \mathbb{N}_0} \vartheta_{n}^{2k}(B): \; \mathcal{E}xt_{Res, \mathcal{C}}^n(A, B) \rightarrow \widehat{\mathcal{E}xt}_{\mathcal{C}}^n(A, B) \, . \]
To show that it is surjective, let $\Phi_{\bullet+n}+\widehat{\mathrm{Null}}_{\mathrm{Ch}(\mathcal{C})}(A[n]_{\bullet}, B_{\bullet})$ be in $\widehat{\mathcal{E}xt}_{\mathcal{C}}^n(A, B)$. Since it is an almost chain map, there is $k \in \mathbb{N}_0$ such that $(\Phi_{m+n}: A_{m+n} \rightarrow B_m)_{m \geq 2k}$ is a chain map. Defining $\varphi_{\bullet+n+2k}: A[n+2k]_{\bullet} \rightarrow B_{\bullet}^{+2k}$ by setting
\[ \varphi_{m+n+2k} := \begin{cases}\Phi_{m+n+2k}: A_{m+n+2k} \rightarrow B_{m+2k} &\text{if } m \geq 0 \\ \; \; 0 &\text{if } m < 0 \end{cases} \]
gives an element in $\mathcal{E}xt_{\mathcal{C}}^{n+2k}(A, \widetilde{B}_{2k})$ that $\vartheta_{n}^{2k}(B)$ maps to $\Phi_{\bullet+n} + \widehat{\mathrm{Null}}_{\mathrm{Ch}(\mathcal{C})}(A[n]_{\bullet}, B_{\bullet})$. To prove that $\vartheta^n(B)$ is injective, let $x \in \varinjlim  \mathcal{E}xt_{\mathcal{C}}^{n+2k}(A, \widetilde{B}_{2k})$ with $\vartheta^n(B)(x) = 0$. According to Proposition~\ref{prop:liminab}, there exists $k \in \mathbb{N}_0$ and an element
\[  \psi_{\bullet+n+2k} \in \mathrm{Hom}_{\mathrm{Ch}(\mathcal{C})}(A[n+2k]_{\bullet}, B_{\bullet}^{+2k}) \]
that maps to $x$ in the direct limit and that lies in $\mathrm{Ker}(\vartheta_n^{2k}(B))$. One can choose the natural number $k$ such that the representative $\psi_{\bullet+n+2k}$ is nullhomotopic. As $x = 0$, $\vartheta^n(B)$ is injective and thus an isomorphism.
\end{proof}

The desired isomorphisms from the terms of the resolution construction to the ones of the hypercohomology construction is given by

\begin{defn}\label{defn:hyperresolcohom}
For every $n \in \mathbb{Z}$ there is an isomorphism
\[ \sigma^n := \vartheta^n \circ \zeta^n: \widehat{\mathrm{Ext}}_{\mathcal{C}}^n(A, -) \rightarrow \mathcal{E}xt_{Res, \mathcal{C}}^n(A, -) \rightarrow \widehat{\mathcal{E}xt}_{\mathcal{C}}^n(A, -) \]
where $\zeta^n$ is taken from Definition~\ref{defn:resolinchains} and $\vartheta^n$ is taken from Lemma~\ref{lem:vogelresol}.
\end{defn}

\subsection{Preliminaries to the isomorphisms of cohomological functors}

We require preliminary results to demonstrate that the isomorphisms 
\[ \vartheta^n(-): \mathcal{E}xt_{Res, \mathcal{C}}^n(A, -) \rightarrow \widehat{\mathcal{E}xt}_{\mathcal{C}}^n(A, -) \quad \text{and} \quad \sigma^n: \widehat{\mathrm{Ext}}_{\mathcal{C}}^n(A, -) \rightarrow \widehat{\mathcal{E}xt}_{\mathcal{C}}^n(A, -) \]
from Lemma~\ref{lem:vogelresol} and Definition~\ref{defn:hyperresolcohom} extend to isomorphisms of cohomological functors. We need to to prove that $\vartheta^n(-)$ is natural on the one hand while we need to develop a particular lift of an identity homomorphism on the other hand.

\begin{prop}\label{prop:thetanatural}
For every $n \in \mathbb{Z}$ the isomorphism $\vartheta^n(-)$ from Lemma~\ref{lem:vogelresol} is natural. That is, for every morphism $f: B \rightarrow C$ in $\mathcal{C}$ the square
\begin{center}
\begin{tikzcd}
  \mathcal{E}xt_{Res, \mathcal{C}}^n(A{,}\, B) \arrow[r, "\vartheta^n(B)"] \arrow[d, "\mathcal{E}xt_{Res, \mathcal{C}}^n(A{,}\, f)"] & \widehat{\mathcal{E}xt}_{\mathcal{C}}^n(A{,}\, B) \arrow[d, "\widehat{\mathcal{E}xt}_{\mathcal{C}}^n(A{,}\, f)"] \\
  \mathcal{E}xt_{Res, \mathcal{C}}^n(A{,}\, C) \arrow[r, "\vartheta^n(C)"] & \widehat{\mathcal{E}xt}_{\mathcal{C}}^n(A{,}\, C)
\end{tikzcd}
\end{center}
commutes.
\end{prop}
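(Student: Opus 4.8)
The plan is to reduce the claim to a check at each stage of the two direct systems that realise the vertical maps, and then to compare almost chain maps degreewise. Recall from Definition~\ref{defn:chainresol} and Lemma~\ref{lem:vogelresol} that, after restricting to the cofinal even-indexed subsystem, $\vartheta^n(B) = \varinjlim_{k} \vartheta_n^{2k}(B)$ and $\mathcal{E}xt_{Res,\mathcal{C}}^n(A,f) = \varinjlim_{k} \mathcal{E}xt_{\mathcal{C}}^{n+2k}(A,\widetilde{f}_{2k})$, while $\widehat{\mathcal{E}xt}_{\mathcal{C}}^n(A,f)$ is post-composition with a chain map $f_{\bullet}\colon B_{\bullet}\to C_{\bullet}$ lifting $f$. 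It therefore suffices to prove that for every $k\in\mathbb{N}_0$ the square with horizontal maps $\vartheta_n^{2k}(B)$ and $\vartheta_n^{2k}(C)$ and vertical maps $\mathcal{E}xt_{\mathcal{C}}^{n+2k}(A,\widetilde{f}_{2k})$ and $\widehat{\mathcal{E}xt}_{\mathcal{C}}^n(A,f)$ commutes; that these squares form a morphism of direct systems is immediate from the compatibilities already built into $\vartheta^n$, $\mathcal{E}xt_{Res,\mathcal{C}}^n(A,f)$ and $\widehat{\mathcal{E}xt}_{\mathcal{C}}^n(A,f)$, and the proposition then follows by passing to $\varinjlim$.

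The second step is to align the auxiliary choices. By Proposition~\ref{prop:hyperbifunctor} the homomorphism $\mathcal{E}xt_{\mathcal{C}}^{n+2k}(A,\widetilde{f}_{2k})$ is independent of the chosen chain lift of $\widetilde{f}_{2k}$, and by Proposition~\ref{prop:omeganatural} the morphism $\mathcal{E}xt_{Res,\mathcal{C}}^n(A,f)$ is independent of the sequence $(\widetilde{f}_k)_{k\in\mathbb{N}_0}$ produced by Diagram~\ref{diag:complexlift}. I may therefore fix one chain map $f_{\bullet}\colon B_{\bullet}\to C_{\bullet}$ lifting $f$ (Comparison Theorem~\cite[Theorem~2.2.6]{wei94}), let $\widetilde{f}_k\colon\widetilde{B}_k\to\widetilde{C}_k$ be the morphism induced by $f_{\bullet}$ on $k$-th syzygies, and note that the shifted truncation $(f_{m+2k}\colon B_m^{+2k}\to C_m^{+2k})_{m\geq 0}$, extended by zero in negative degrees, is a chain map $B_{\bullet}^{+2k}\to C_{\bullet}^{+2k}$ lifting $\widetilde{f}_{2k}$; this uses only that $f_{\bullet}$ is a chain map and that its augmentation induces $\widetilde{f}_{2k}$ on syzygies, which is the bottom square of Diagram~\ref{diag:complexlift}. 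I use precisely this lift to compute $\mathcal{E}xt_{\mathcal{C}}^{n+2k}(A,\widetilde{f}_{2k})$.

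With these choices in place the level-$k$ square is verified on representatives. Take $\varphi_{\bullet+n+2k}\in\mathrm{Hom}_{\mathrm{Ch}(\mathcal{C})}(A[n+2k]_{\bullet},B_{\bullet}^{+2k})$. Applying $\vartheta_n^{2k}(B)$ gives the almost chain map $\Phi_{\bullet+n}$ with $\Phi_{m+n}=\varphi_{m+n}$ for $m\geq 2k$ and $\Phi_{m+n}=0$ for $m<2k$; post-composing with $f_{\bullet}$ yields the almost chain map whose $(m+n)$-th component is $f_m\circ\varphi_{m+n}$ for $m\geq 2k$ and $0$ for $m<2k$. Going around the other way, post-composing $\varphi_{\bullet+n+2k}$ with the lift $(f_{m+2k})_m$ and then applying $\vartheta_n^{2k}(C)$ produces the almost chain map with $(m+n)$-th component $f_m\circ\varphi_{m+n}$ for $m\geq 2k$ and $0$ for $m<2k$; no sign discrepancy occurs because the differentials of $A[n]_{\bullet}$ and $A[n+2k]_{\bullet}$ carry the same sign $(-1)^n$. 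Hence the two composites agree on representatives, so on classes, the level-$k$ squares commute, and taking direct limits gives the claim.

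The main obstacle is purely the bookkeeping: tracking the index shift by $2k$ and the sign conventions for $A[n]_{\bullet}$ versus $A[n+2k]_{\bullet}$, and justifying that one may replace the a priori arbitrary choices of the syzygy morphisms $\widetilde{f}_k$ and of the chain lift computing $\mathcal{E}xt_{\mathcal{C}}^{n+2k}(A,\widetilde{f}_{2k})$ by the compatible ones arising from a single chain map $f_{\bullet}$. Both points are dispatched by the independence-of-choices statements in Proposition~\ref{prop:omeganatural} and Proposition~\ref{prop:hyperbifunctor}, after which the degreewise identity is immediate.
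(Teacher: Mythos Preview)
Your proof is correct and follows essentially the same approach as the paper: both reduce to the level-$k$ naturality square, make the key observation that the truncation $f_{\bullet}^{+2k}$ of a fixed chain lift $f_{\bullet}\colon B_{\bullet}\to C_{\bullet}$ serves as a lift of $\widetilde{f}_{2k}$, and then verify the square degreewise on representatives before passing to the direct limit. The paper packages this into a commuting prism diagram (incorporating the transition maps $\Delta^{n+2k+1}\circ\Delta^{n+2k}$ to display the direct-system compatibility explicitly), whereas you handle that compatibility by a one-line appeal to what is already established; the mathematical content is the same.
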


\begin{proof}
Let $(\widetilde{f}_k: \widetilde{M}_k \rightarrow \widetilde{N}_k)_{k \in \mathbb{N}_0}$ be a sequence of morphism as in Definition~\ref{defn:resolindmorph}. It suffices to consider the square
\begin{center}
\begin{tikzcd}
    \mathcal{E}xt_{\mathcal{C}}^{n+2k}(A{,} \, \widetilde{B}_{2k}) \arrow[rr, "\mathcal{E}xt_{\mathcal{C}}^{n+2k}(A{,} \, \widetilde{f}_{2k})"] \arrow[d, "\vartheta_n^{2k}(B)"] & & \mathcal{E}xt_{\mathcal{C}}^{n+2k}(A{,} \, \widetilde{C}_{2k}) \arrow[d, "\vartheta_n^{2k}(C)"] \\
    \widehat{\mathcal{E}xt}_{\mathcal{C}}^n(A{,} \, B) \arrow[rr, "\widehat{\mathcal{E}xt}_{\mathcal{C}}^n(A{,} \, f)"] & & \widehat{\mathcal{E}xt}_{\mathcal{C}}^n(A{,} \, C)
\end{tikzcd}
\end{center}
for any $k \in \mathbb{N}_0$ because Diagram~\ref{diag:thetadeltatriangle} commutes and the connecting homomorphisms $\Delta^{\bullet}: \mathcal{E}xt_{\mathcal{C}}^{\bullet}(A, -) \rightarrow \mathcal{E}xt_{\mathcal{C}}^{\bullet+1}(A, -)$ are natural. It commutes as we can lift $\widetilde{f}_{2k}: \widetilde{B}_{2k} \rightarrow \widetilde{C}_{2k}$ to a chain map $f_{\bullet}^{+2k}: B_{\bullet}^{+2k} \rightarrow C_{\bullet}^{+2k}$ given by
\[ f_m^{+2k} := \begin{cases} f_{m+2k}: B_{m+2k} \rightarrow C_{m+2k} &\text{if } m \geq 0 \\ \; \; 0 &\text{if } m < 0  \end{cases} \]
The above commutative squares form a direct system whose direct limit is the square in the statement of the proposition.
\end{proof}

We present a technical criterion of a lift of an identity homomorphism that we have promised above.

\begin{prop}\label{prop:syzygyislift}
Let $0 \rightarrow B \xrightarrow{f} C \xrightarrow{g} D \rightarrow 0$ be a short exact sequence in $\mathcal{C}$ and let $B_{\bullet}$, $D_{\bullet}$ be projective resolutions. Let $h^{\ast}: \widetilde{D}_1 \rightarrow B$ be a morphism lifting $\mathrm{id}_D: D \rightarrow D$ as in Diagram~\ref{diag:delta} and let $h_{\bullet+1}: D_{\bullet}^{+1} \rightarrow B_{\bullet}$ be a chain map lifting $h^{\ast}$ as in the Comparison Theorem. Denote by $\widetilde{h}_{k+1}^{\ast}: \widetilde{D}_{k+1} \rightarrow \widetilde{B}_k$ the induced homomorphism between the corresponding syzygies where $\widetilde{h}_1^{\ast} = h^{\ast}$. \\

Then there is a projective resolution $\underline{D}_{\bullet}'$ of $D$ such that for any $k \in \mathbb{N}_0$, $\widetilde{D}_k = \widetilde{\underline{D}}_k$ and there is a commutative diagram
\begin{equation}\label{diag:syzygyislift}
\begin{tikzcd}
    0 \arrow[r] & \widetilde{D}_{k+1} \arrow[r, "\iota_{k+1}"] \arrow[d, "\widetilde{h}_{k+1}^{\ast}"] & \underline{D}_k \arrow[r, "\pi_k"] \arrow[d, "h_k"] & \widetilde{D}_k \arrow[r] \arrow[d, "\mathrm{id}"] & 0 \\
    0 \arrow[r] & \widetilde{B}_k \arrow[r, "\widetilde{f}_k"] & \widetilde{C}_k \arrow[r, "\widetilde{g}_k"] & \widetilde{D}_k \arrow[r] & 0
\end{tikzcd}
\end{equation}
where the morphisms $\iota_{k+1}$, $\pi_k$ stem from $\underline{D}_{\bullet}$. In particular, any $\widetilde{h}_{k+1}^{\ast}$ is a lift of $\mathrm{id}_{\widetilde{D}_k}$ as in Diagram~\ref{diag:delta}.
\end{prop}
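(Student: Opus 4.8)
The plan is to construct the projective resolution $\underline{D}_\bullet$ of $D$ term by term, using the chosen chain map $h_{\bullet+1} : D_\bullet^{+1} \to B_\bullet$ together with the given short exact sequence $0 \to B \xrightarrow{f} C \xrightarrow{g} D \to 0$, in such a way that the syzygies are literally reproduced and the diagram~\eqref{diag:syzygyislift} commutes in each degree. First I would set $\underline{D}_{-1} := D$ and, since the resolution is to compute the syzygies of $D_\bullet$ itself, I would aim for $\underline{D}_k$ that surjects onto $\widetilde{D}_k$ with kernel $\widetilde{D}_{k+1}$; the natural candidate is to take $\underline{D}_k := D_k \oplus \widetilde{B}_k$ in positive degrees, or more carefully to build it as the pullback/pushout that realises both the original augmentation $\pi_k : D_k \to \widetilde{D}_k$ and a compatible map $h_k : \underline{D}_k \to \widetilde{C}_k$. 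The key observation driving the construction is that $h^\ast : \widetilde{D}_1 \to B$ gives, via Diagram~\ref{diag:delta}, a commuting ladder whose top row is $0 \to \widetilde{D}_1 \to D_0 \to D \to 0$; one iterates this using that $h_{\bullet+1}$ is an honest chain map lifting $h^\ast$, so the induced maps on syzygies $\widetilde{h}_{k+1}^\ast : \widetilde{D}_{k+1} \to \widetilde{B}_k$ are compatible with the differentials.

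The key steps, in order, would be: (1) observe that the chain map $h_{\bullet+1} : D_\bullet^{+1} \to B_\bullet$ induces for each $k$ a morphism $\widetilde{h}_{k+1}^\ast : \widetilde{D}_{k+1} \to \widetilde{B}_k$ between syzygies, with $\widetilde{h}_1^\ast = h^\ast$, simply by restricting to kernels of the respective augmentations --- this is the same mechanism already used for the maps $\overline{f}^\ast$ in Diagram~\eqref{diag:satellitemorph}; (2) define $\underline{D}_k$ for $k \geq 0$ so that it fits into a short exact sequence $0 \to \widetilde{D}_{k+1} \xrightarrow{\iota_{k+1}} \underline{D}_k \xrightarrow{\pi_k} \widetilde{D}_k \to 0$ with $\underline{D}_k$ projective and, crucially, admitting a morphism $h_k : \underline{D}_k \to \widetilde{C}_k$ making the square with $\widetilde{h}_{k+1}^\ast$ and $\mathrm{id}_{\widetilde{D}_k}$ commute --- I would take $\underline{D}_k := D_k$ when $\widetilde{D}_{k+1} \to \widetilde{B}_k$ can be lifted through $\widetilde{C}_k \to \widetilde{D}_k$ along $\pi_k$ (which holds since $D_k$ is projective and $\widetilde{g}_k$ is an epimorphism), so in fact $\underline{D}_\bullet = D_\bullet$ works and $h_k$ is obtained by the lifting property of the projective $D_k$ against the epimorphism $\widetilde{g}_k : \widetilde{C}_k \to \widetilde{D}_k$, applied to the composite $D_k \xrightarrow{\pi_k} \widetilde{D}_k$; (3) verify that the differentials of $\underline{D}_\bullet$ are compatible, i.e. that $\iota_{k+1} \circ \pi_{k+1} = \partial_{k+1}^{\underline{D}}$ recovers the original boundary maps, so that $\widetilde{\underline{D}}_k = \widetilde{D}_k$ on the nose; (4) check that $\widetilde{f}_k \circ \widetilde{h}_{k+1}^\ast = h_k \circ \iota_{k+1}$ and $\widetilde{g}_k \circ h_k = \pi_k$, which is exactly commutativity of~\eqref{diag:syzygyislift}, by chasing the kernel/cokernel descriptions or invoking Freyd--Mitchell as done around Diagram~\eqref{diag:anticomm}; (5) conclude that each $\widetilde{h}_{k+1}^\ast$ is a lift of $\mathrm{id}_{\widetilde{D}_k}$ in the sense of Diagram~\ref{diag:delta}, since~\eqref{diag:syzygyislift} is precisely an instance of that diagram with $A = \widetilde{B}_k$, $B = \widetilde{C}_k$, $C = \widetilde{D}_k$, $P = \underline{D}_k$, $M = \widetilde{D}_{k+1}$.

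The main obstacle I anticipate is step (2)--(4) done simultaneously: one must choose the lifts $h_k : \underline{D}_k \to \widetilde{C}_k$ not merely degree-by-degree but compatibly with the boundary maps, so that the collection $(h_k)_{k \in \mathbb{N}_0}$ assembles into something coherent rather than just a sequence of unrelated liftings. The cleanest way around this is to first lift $h_{\bullet+1}$ (already a chain map by hypothesis) through the surjection $C_\bullet \to D_\bullet$ of projective resolutions --- or rather to use that a chain map $D_\bullet^{+1} \to B_\bullet$ together with the chain-level short exact sequence $0 \to B_\bullet \to C_\bullet \to D_\bullet \to 0$ (obtained from the Horseshoe Lemma as in Diagram~\eqref{diag:horseshoe}) produces the $h_k$ functorially as the components of a chain-level nullhomotopy-type datum; then compatibility with differentials is automatic. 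A subtle point deserving care is the degree shift: $h_{\bullet+1}$ lands in $B_\bullet$ shifted so that $\widetilde{D}_{k+1}$ maps to $\widetilde{B}_k$ rather than $\widetilde{B}_{k+1}$, and one must track this index bookkeeping consistently with the conventions for $D_\bullet^{+1}$ and $[-1]$ fixed earlier in Section~\ref{sec:hypercohom}. Once the coherent family $(h_k)$ is in hand, the remaining verifications are routine diagram chases of the type already performed repeatedly in the paper.
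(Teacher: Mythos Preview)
Your proposal has a genuine gap in step~(2), precisely at the point you flag as the ``main obstacle'' but do not actually resolve. When you lift $\pi_k : D_k \to \widetilde{D}_k$ through the epimorphism $\widetilde{g}_k$ by projectivity of $D_k$, you obtain \emph{some} $h_k : D_k \to \widetilde{C}_k$ making the right-hand square of~\eqref{diag:syzygyislift} commute. The left-hand square then commutes automatically with \emph{some} induced map $\psi : \widetilde{D}_{k+1} \to \widetilde{B}_k$ (by the universal property of the kernel $\widetilde{B}_k = \ker(\widetilde{g}_k)$), but there is no reason this $\psi$ equals the \emph{prescribed} map $\widetilde{h}_{k+1}^\ast$ coming from the fixed chain map $h_{\bullet+1} : D_\bullet^{+1} \to B_\bullet$. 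Adjusting $h_k$ by a correction $\widetilde{f}_k \circ \delta$ with $\delta : D_k \to \widetilde{B}_k$ would require extending $\psi - \widetilde{h}_{k+1}^\ast : \widetilde{D}_{k+1} \to \widetilde{B}_k$ along the monomorphism $\iota_{k+1} : \widetilde{D}_{k+1} \hookrightarrow D_k$, which is an injectivity-type problem, not projectivity. Your suggestion to ``lift $h_{\bullet+1}$ through the surjection $C_\bullet \to D_\bullet$'' does not parse (wrong direction), and the allusion to nullhomotopy data is too vague to fill the gap.

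The paper takes a different, more structured route: it builds $\underline{D}_\bullet$ by induction on $k$, setting $\underline{D}_0 = D_0$ and, at each inductive step, applying the Horseshoe Lemma \emph{twice} --- once to the short exact sequence $0 \to \widetilde{D}_{k+1} \to \underline{D}_k \to \widetilde{D}_k \to 0$ with outer projectives $D_{k+1}$ and $\underline{D}_k$, and once to $0 \to \widetilde{B}_k \to \widetilde{C}_k \to \widetilde{D}_k \to 0$ with outer projectives $B_k$ and $\underline{D}_k$. The component $h_{k+1}^\ast : D_{k+1} \to B_k$ of the given chain map then yields a morphism $h_{k+1}^\ast \oplus \mathrm{id}$ between the middle terms of these two Horseshoe diagrams, and a direct calculation shows this morphism is compatible with the Horseshoe augmentations. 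Passing to kernels produces $\underline{D}_{k+1} := \mathrm{Ker}(\varepsilon)$ (which is projective, being a summand of $D_{k+1} \oplus \underline{D}_k$) together with the map $h_{k+1} : \underline{D}_{k+1} \to \widetilde{C}_{k+1}$, and the left square of~\eqref{diag:syzygyislift} at level $k+1$ commutes with the prescribed $\widetilde{h}_{k+2}^\ast$ \emph{by construction}. The compatibility you were missing is built in through the Horseshoe machinery rather than obtained after the fact.
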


Let us explain the context in which we need this result. In Definition~\ref{defn:hyperconnmorph}, we have first constructed a connecting homomorphism $\widehat{\Delta}^n: \widehat{\mathcal{E}xt}_{\mathcal{C}}^n(A, D) \rightarrow \widehat{\mathcal{E}xt}_{\mathcal{C}}^{n+1}(A, \widetilde{D}_1)$ associated to the short exact sequence $0 \rightarrow \widetilde{D}_1 \rightarrow D_0 \rightarrow D \rightarrow 0$. Then we defined
\[ \widehat{\mathcal{E}xt}_{\mathcal{C}}^n(A, D) \xrightarrow{\widehat{\Delta}^n} \widehat{\mathcal{E}xt}_{\mathcal{C}}^{n+1}(A, \widetilde{D}_1) \xrightarrow{\widehat{\mathcal{E}xt}_{\mathcal{C}}^{n+1}(A, h^{\ast})} \widehat{\mathcal{E}xt}_{\mathcal{C}}^{n+1}(A, B) \]
to be the connecting homomorphism associated to $0 \rightarrow B \rightarrow C \rightarrow D \rightarrow 0$. Later, we require
\[ \mathcal{E}xt_{\mathcal{C}}^{n+2k}(A, \widetilde{D}_{2k}) \xrightarrow{\Delta^n} \mathcal{E}xt_{\mathcal{C}}^{n+1+2k}(A, \widetilde{D}_{2k+1}) \xrightarrow{\mathcal{E}xt_{\mathcal{C}}^{n+1+2k}(A, \widetilde{h}_{2k+1}^{\ast})} \mathcal{E}xt_{\mathcal{C}}^{n+1+2k}(A, \widetilde{B}_{2k}) \]
to be the connecting homomorphism associated to $0 \rightarrow \widetilde{B}_{2k} \rightarrow \widetilde{C}_{2k} \rightarrow \widetilde{D}_{2k} \rightarrow 0$. This is where Proposition~\ref{prop:syzygyislift} comes into play.

\begin{proof}[Proposition~\ref{prop:syzygyislift}]
We proceed by induction over $k \in \mathbb{N}_0$. If $k = 0$, then we take $\underline{D}_0 := D_0$ where Diagram~\ref{diag:delta} yields the desired diagram. Assume that $\underline{D}_0, \dots, \underline{D}_k$ have been defined and that there is a diagram of the required form for a number $k \geq 0$. In order to construct $\underline{D}_{k+1}$ and establish Diagram~\ref{diag:syzygyislift} in the inductive step, we consider two instance of Diagram~\ref{diag:horseshoe} from the Horseshoe Lemma. Adhering to the construction found in~\cite[p.~37--38]{wei94}, we explain how to obtain the first diagram:
\begin{equation}\label{diag:horseshoeback}
\begin{tikzcd}[row sep = scriptsize]
  {}
    & 0 \arrow[d]
    & & 0 \arrow[d]
    & & 0 \arrow[d]
    & {} \\
  0 \arrow[r]
    & \widetilde{D}_{k+2} \arrow[rr, "\iota_{k+2}''"] \arrow[dd, "\iota_{k+2}'"]
    & & \mathrm{Ker}(\varepsilon) \arrow[rr, "\pi_{k+1}''"] \arrow[dd]
    & & \widetilde{D}_{k+1} \arrow[r] \arrow[dd, "\iota_{k+1}"]
    & 0 \\ \\
  0 \arrow[r]
    & D_{k+1} \arrow[rr, "i"] \arrow[ddrr, "\iota_{k+1} \circ \pi_{k+1}'"] \arrow[dd, "\pi_{k+1}'"]
    & & D_{k+1} \oplus \underline{D}_k \arrow[rr, "p"] \arrow[dd, "\epsilon"]
    & & \underline{D}_k \arrow[r] \arrow[ddll, "\mathrm{id}"] \arrow[dd, "\pi_k"]
    & 0 \\ \\
  0 \arrow[r]
    & \widetilde{D}_{k+1} \arrow[rr, "\iota_{k+1}"] \arrow[d]
    & & \underline{D}_k \arrow[rr, "\pi_k"] \arrow[d]
    & & \widetilde{D}_k \arrow[r] \arrow[d]
    & 0 \\
  {}
    & 0
    & & 0
    & & 0
    & {}
\end{tikzcd}
\end{equation}
The morphisms $\pi_k$, $\iota_{k+1}$ stem from the projective resolution $(\underline{D}_m)_{m = 1}^k$ and the morphisms $\pi_{k+1}'$, $\iota_{k+2}'$ from $D_{\bullet}$. In particular, the morphisms $\iota_{k+2}'$ and $\iota_{k+1}$ from the top can be taken as kernels of the morphisms $\pi_{k+1}'$ and $\pi_k$. The morphisms of the middle row are the ones of a direct product and induce the morphisms of the top row. All rows and columns are exact. Because $\underline{D}_k$ and $D_{k+1} \oplus \underline{D}_k$ are projective, the middle column is a split exact sequence and $\mathrm{Ker}(\varepsilon)$ is also projective. Analogously, we construct with the morphisms $\widetilde{f}_k$ and $\widetilde{g}_k$ from Diagram~\ref{diag:syzygyislift} the second diagram of this form:
\begin{equation}\label{diag:horseshoefront}
\begin{tikzcd}[row sep = scriptsize]
  {}
    & 0 \arrow[d]
    & & 0 \arrow[d]
    & & 0 \arrow[d]
    & {} \\
  0 \arrow[r]
    & \widetilde{B}_{k+1} \arrow[rr, "\widetilde{f}_{k+1}"] \arrow[dd]
    & & \widetilde{C}_{k+1} \arrow[rr, "\widetilde{g}_{k+1}"] \arrow[dd]
    & & \widetilde{D}_{k+1} \arrow[r] \arrow[dd, "\iota_{k+1}"]
    & 0 \\ \\
  0 \arrow[r]
    & B_k \arrow[rr, "i = f_k"] \arrow[ddrr, "\widetilde{f}_k \circ \check{\pi}_k"] \arrow[dd, "\check{\pi}_k"]
    & & B_k \oplus \underline{D}_k \arrow[rr, "p = g_k"] \arrow[dd, "\eta"]
    & & \underline{D}_k \arrow[r] \arrow[ddll, "h_k"] \arrow[dd, "\pi_k"]
    & 0 \\ \\
  0 \arrow[r]
    & \widetilde{B}_k \arrow[rr, "\widetilde{f}_k"] \arrow[d]
    & & C_k \arrow[rr, "\widetilde{g}_k"] \arrow[d]
    & & \widetilde{D}_k \arrow[r] \arrow[d]
    & 0 \\
  {}
    & 0
    & & 0
    & & 0
    & {}
\end{tikzcd}
\end{equation}
The triangle at the bottom right corner commutes by Diagram~\ref{diag:syzygyislift}. We incorporate the lower halves of Diagram~\ref{diag:horseshoefront} and Diagram~\ref{diag:horseshoeback} into the front and back side of the diagram \\

\begin{equation}\label{diag:doesntcommingen}
\begin{tikzcd}
  & {\scriptstyle D_{k+1}} \arrow[rr, "{\scriptstyle i_1^1}"] \arrow[ddrr, bend right = 15, "{\scriptstyle \iota_{k+1} \circ \pi_{k+1}'}" near end] \arrow[dl, "{\scriptstyle h_{k+1}^{\ast}}"] \arrow[dd, "{\scriptstyle \pi_{k+1}'}" near end]
    & & {\scriptstyle D_{k+1} \oplus \underline{D}_k} \arrow[rr, "{\scriptstyle p_2^1}"] \arrow[dl, "{\scriptstyle h_{k+1}^{\ast} \oplus \mathrm{id}}"] \arrow[dd, shift left = 2mm, "{\scriptstyle \epsilon}" near start]
    & & {\scriptstyle \underline{D}_k} \arrow[dl, "{\scriptstyle \mathrm{id}}" near end] \arrow[ddll, bend left = 32, "{\scriptstyle \mathrm{id}}" near start] \arrow[dd, "{\scriptstyle \pi_k}"] \\
  {\scriptstyle B_k} \arrow[rr, crossing over, "{\scriptstyle i_1^2}" near end] \arrow[dd, "{\scriptstyle \check{\pi}_k}"]
    & & {\scriptstyle B_k \oplus \underline{D}_k} \arrow[rr, crossing over, "{\scriptstyle p_2^2}" near end]
    & & {\scriptstyle \underline{D}_k} & \\
  & {\scriptstyle \widetilde{D}_{k+1}} \arrow[rr, "{\scriptstyle \iota_{k+1}}" near start] \arrow[dl, "{\scriptstyle \widetilde{h}_{k+1}^{\ast}}"]
    & & {\scriptstyle \underline{D}_k} \arrow[rr, "{\scriptstyle \pi_k}" near end] \arrow[dl, "{\scriptstyle h_k}" near start]
    & & {\scriptstyle \widetilde{D}_k} \arrow[dl, "{\scriptstyle \mathrm{id}}"] \\
  {\scriptstyle \widetilde{B}_k} \arrow[rr, "{\scriptstyle \widetilde{f}_k}"]
    & & {\scriptstyle C_k} \arrow[rr, "{\scriptstyle \widetilde{g}_k}" near end] \arrow[from = uull, bend right = 20, crossing over, "{\scriptstyle \widetilde{f}_k \circ \check{\pi}_k}" near end] \arrow[from = uu, crossing over, "{\scriptstyle \eta}" near end] \arrow[from = uurr, bend left = 25, crossing over, "{\scriptstyle h_k}"]
    & & {\scriptstyle \widetilde{D}_k} \arrow[from = uu, crossing over, "{\scriptstyle \pi_k}" near end] &
\end{tikzcd}
\end{equation}
The bottom side stems from Diagram~\ref{diag:syzygyislift}, which commutes by our induction hypothesis. The left hand side arises from the chain map $h_{\bullet}^{\ast}: D_{\bullet}^{+1} \rightarrow B_{\bullet}$. The morphism $h_{k+1}^{\ast} \oplus \mathrm{id}$ is constructed as in~\cite[p.~251]{mac95} and renders the top side commutative. In particular, all squares on the outside of Diagram~\ref{diag:doesntcommingen} commute. The slanted squares in the interior give rise to the commuting diagram
\begin{center}
\begin{tikzcd}
    D_{k+1} \arrow[rr, "\iota_{k+1} \circ \pi_{k+1}'"] \arrow[dd, "h_{k+1}^{\ast}"] & & \underline{D}_k \arrow[dd, "h_k"] & & \underline{D}_k  \arrow[dd, "\mathrm{id}"] \arrow[ll, "\mathrm{id}"] \\ \\
    B_k \arrow[rr, "\widetilde{f}_k \circ \check{\pi}_k"] & & C_k & & \underline{D}_k \arrow[ll, "h_k"]
\end{tikzcd}
\end{center}
We can use it together with Diagram~\ref{diag:horseshoeback}, Diagram~\ref{diag:horseshoefront} and the content of~\cite[p.~251]{mac95} to prove that that the middle square in the interior of Diagram~\ref{diag:doesntcommingen} also commutes. Therefore, all squares in Diagram~\ref{diag:doesntcommingen} commute. Its top side is connected to its bottom side by epimorphisms according to Diagram~\ref{diag:horseshoeback} and~\ref{diag:horseshoefront}. We take kernels to obtain the commuting diagram
\begin{center}
\begin{tikzcd}
    0 \arrow[r] & \widetilde{D}_{k+2} \arrow[r, "\iota_{k+2}''"] \arrow[d, "\widetilde{h}_{k+2}^{\ast}"] & \mathrm{Ker}(\varepsilon) \arrow[r, "\pi_{k+1}''"] \arrow[d, "H"] & \widetilde{D}_{k+1} \arrow[r] \arrow[d, "\mathrm{id}"] & 0 \\
    0 \arrow[r] & \widetilde{B}_{k+1} \arrow[r, "\widetilde{f}_{k+1}"] & \widetilde{C}_{k+1} \arrow[r, "\widetilde{g}_{k+1}"] & \widetilde{D}_{k+1} \arrow[r] & 0
\end{tikzcd}
\end{center}
where the rows are short exact sequences. Having seen that $\mathrm{Ker}(\varepsilon)$ is projective, we complete the inductive step by setting $\underline{D}_{k+1} := \mathrm{Ker}(\varepsilon)$, $\iota_{k+2} : = \iota_{k+2}''$, $\pi_{k+1} := \pi_{k+1}''$ and $h_{k+1} := H$.
\end{proof}

\subsection{Isomorphisms of cohomological functors}

We establish the first isomorphism of cohomological functors to the terms of the hypercohomology construction.

\begin{lem}\label{lem:hyperresol}
For any short exact sequence $0 \rightarrow B \rightarrow C \rightarrow D \rightarrow 0$ in $\mathcal{C}$ the square
\begin{center}
\begin{tikzcd}
    \mathcal{E}xt_{Res, \mathcal{C}}^n(A{,} \, D) \arrow[r, "\vartheta^n"] \arrow[d, "\widetilde{\Delta}^n"] & \widehat{\mathcal{E}xt}_{\mathcal{C}}^n(A{,} \, D) \arrow[d, "\widehat{\Delta}^n"] \\
    \mathcal{E}xt_{Res, \mathcal{C}}^{n+1}(A{,} \, B) \arrow[r, "\vartheta^{n+1}"] & \widehat{\mathcal{E}xt}_{\mathcal{C}}^{n+1}(A{,} \, B)
\end{tikzcd}
\end{center}
commutes for every $n \in \mathbb{Z}$. In particular, $\vartheta^{\bullet}: \mathcal{E}xt_{Res, \mathcal{C}}^{\bullet}(A, -) \rightarrow \widehat{\mathcal{E}xt}_{\mathcal{C}}^{\bullet}(A, -)$ is an isomorphism of cohomological functors and $(\widehat{\mathcal{E}xt}_{\mathcal{C}}^{\bullet}(A, -), \widehat{\Delta}^{\bullet})$ forms a cohomological functor.
\end{lem}

\begin{proof}
It suffices to prove that the square in the statement of the theorem commutes since $\vartheta^n$ is already a natural isomorphism by Proposition~\ref{prop:thetanatural}. Let $k \in \mathbb{N}_0$, $N := n+2k$ and $K := 2k$. Then we need to argue that the below diagram commutes.
\begin{equation}\label{diag:unfoldedprism}
\begin{tikzcd}
    {\scriptscriptstyle \mathcal{E}xt_{\mathcal{C}}^N(A{,} \, \widetilde{D}_K)} \arrow[rr, "{\scriptscriptstyle \Delta^N}" near end] \arrow[rrrrr, bend right = 12, "\Delta^N"] \arrow[dd, shift right = 4mm, "{\scriptscriptstyle \Delta^N}"] & & {\scriptscriptstyle \mathcal{E}xt_{\mathcal{C}}^{N+1}(A{,} \, \widetilde{D}_{K+1})} \arrow[rrr, "{\scriptscriptstyle \mathcal{E}xt_{\mathcal{C}}^{N+1}(A{,} \, \widetilde{h}_{K+1}^{\ast})}" near end] \arrow[dd, shift right = 4mm, crossing over, "{\scriptscriptstyle \Delta^{N+1}}" near end] & & & {\scriptscriptstyle \mathcal{E}xt_{\mathcal{C}}^{N+1}(A{,} \, \widetilde{B}_K)} \arrow[dd, shift right = 4mm, "{\scriptscriptstyle \Delta^{N+1}}"] \arrow[dddddd, shift left = 6mm, bend left = 25, "{\scriptscriptstyle \vartheta_{n+1}^K(B)}"] \\ \\
    {\scriptscriptstyle \mathcal{E}xt_{\mathcal{C}}^{N+1}(A{,} \, \widetilde{D}_{K+1})} \arrow[rr, "{\scriptscriptstyle -\Delta^{N+1}}" near end] \arrow[rrrrr, bend right = 12, "-\Delta^{N+1}"] \arrow[dd, shift right = 4mm, "{\scriptscriptstyle \Delta^{N+1}}"] & & {\scriptscriptstyle \mathcal{E}xt_{\mathcal{C}}^{N+2}(A{,} \, \widetilde{D}_{K+2})} \arrow[rrr, "{\scriptscriptstyle \mathcal{E}xt_{\mathcal{C}}^{N+2}(A{,} \, \widetilde{h}_{K+2}^{\ast})}" near end] \arrow[dd, shift right = 4mm, crossing over, "{\scriptscriptstyle \Delta^{N+2}}" near end] & & & {\scriptscriptstyle \mathcal{E}xt_{\mathcal{C}}^{N+2}(A{,} \, \widetilde{B}_{K+1})} \arrow[dd, shift right = 4mm, "{\scriptscriptstyle \Delta^{N+2}}"] \\ \\
    {\scriptscriptstyle \mathcal{E}xt_{\mathcal{C}}^{N+2}(A{,} \, \widetilde{D}_{K+2})} \arrow[rr, "{\scriptscriptstyle \Delta^{N+2}}" near end] \arrow[rrrrr, bend right = 12, "\Delta^{N+2}"] \arrow[dd, shift right = 4mm, "{\scriptscriptstyle \vartheta_n^{K+2}(D)}"] & & {\scriptscriptstyle \mathcal{E}xt_{\mathcal{C}}^{N+3}(A{,} \, \widetilde{D}_{K+3})} \arrow[rrr, "{\scriptscriptstyle \mathcal{E}xt_{\mathcal{C}}^{N+2}(A{,} \, \widetilde{h}_{K+3}^{\ast})}" near end] \arrow[dd, shift right = 4mm, crossing over, "{\scriptscriptstyle \vartheta_{n+1}^{K+2}(\widetilde{D}_1)}" near end] & & & {\scriptscriptstyle \mathcal{E}xt_{\mathcal{C}}^{N+3}(A{,} \, \widetilde{B}_{K+2})} \arrow[dd, shift right = 4mm, "{\scriptscriptstyle \vartheta_{n+1}^{K+2}(B)}"] \\ \\
    {\scriptscriptstyle \widehat{\mathcal{E}xt}_{\mathcal{C}}^n(A{,} \, D)} \arrow[rr, "{\scriptscriptstyle \widehat{\Delta}^n}" near end] \arrow[rrrrr, bend right = 12, "\widehat{\Delta}^n"] \arrow[from = uuuuuu, shift left = 6mm, bend left = 25, crossing over, "{\scriptscriptstyle \vartheta_n^K(D)}"] & & {\scriptscriptstyle \widehat{\mathcal{E}xt}_{\mathcal{C}}^{n+1}(A{,} \, \widetilde{D}_1)} \arrow[rrr, "{\scriptscriptstyle \widehat{\mathcal{E}xt}_{\mathcal{C}}^{n+1}(A{,} \, h^{\ast})}" near end] \arrow[from = uuuuuu, shift left = 7mm, bend left = 25, crossing over, "{\scriptscriptstyle \vartheta_{n+1}^K(\widetilde{D}_1)}"] & & & {\scriptscriptstyle \widehat{\mathcal{E}xt}_{\mathcal{C}}^{n+1}(A{,} \, B)}
\end{tikzcd}
\end{equation}
We explain how to construct it. According to Definition~\ref{defn:hyperconnmorph}, the connecting homomorphism $\widehat{\Delta}^n: \widehat{\mathcal{E}xt}_{\mathcal{C}}^n(A, D) \rightarrow \widehat{\mathcal{E}xt}_{\mathcal{C}}^{n+1}(A, B)$ associated to $0 \rightarrow B \rightarrow C \rightarrow D \rightarrow 0$ is rendered in the very bottom row. The top left and middle left square arise from Diagram~\ref{diag:anticomm} and Lemma~\ref{lem:ordinaryhypercohom}. More specifically, their connecting homomorphisms are associated to the short exact sequences at the margins of Diagram~\ref{diag:horseshoeback}. It follows from Definition~\ref{defn:hyperconnmorph} and Lemma~\ref{lem:vogelresol} that the bottom left square and thus, the entire left hand side commutes. The induced homomorphisms in the right hand side are taken from Proposition~\ref{prop:syzygyislift}. More specifically, the homomorphism $h^{\ast}: \widetilde{D}_1 \rightarrow B$ is a lift of $\mathrm{id}_D: D \rightarrow D$ and the homomorphisms $\widetilde{h}_{K+1}^{\ast}$ stem from the chain map $h_{\bullet+1}^{\ast}: D_{\bullet}^{+1} \rightarrow B_{\bullet}$ lifting $h^{\ast}$ as in the Comparison Theorem. By Proposition~\ref{prop:thetanatural}, the entire right hand side commutes. According to Proposition~\ref{prop:syzygyislift}, the homomorphism
\[ \mathcal{E}xt_{\mathcal{C}}^N(A, \widetilde{D}_K) \xrightarrow{\Delta^N} \mathcal{E}xt_{\mathcal{C}}^{N+1}(A, \widetilde{D}_{K+1}) \xrightarrow{\mathcal{E}xt_{\mathcal{C}}^{N+1}(A, \widetilde{h}_{K+1}^{\ast})} \mathcal{E}xt_{\mathcal{C}}^{N+1}(A, \widetilde{B}_K) \]
at the very top of the diagram is the connecting homomorphism $\Delta^N$ associated to the short exact sequence $0 \rightarrow \widetilde{B}_K \rightarrow \widetilde{C}_K \rightarrow \widetilde{D}_K \rightarrow 0$. Analogously, the middle row to the bottom represents the connecting homomorphism $\Delta^{N+2}$. Therefore, the homomorphisms at the margins of Diagram~\ref{diag:unfoldedprism} together with the homomorphisms in its lower third form a direct system of commuting squares. We obtain the square in statement of the theorem when passing to the direct limi.
\end{proof}

\begin{rem}
For any short exact sequence $0 \rightarrow B \rightarrow C \rightarrow D \rightarrow 0$ in $\mathcal{C}$ and any $n \in \mathbb{Z}$, Lemma~\ref{lem:hyperresol} implies that $\widehat{\Delta}^n: \widehat{\mathcal{E}xt}_{\mathcal{C}}^n(A, D) \rightarrow \widehat{\mathcal{E}xt}_{\mathcal{C}}^{n+1}(A, B)$ is well defined in the sense that it does not depend on a choice of a morphism $h^{\ast}: \widetilde{D}_1 \rightarrow B$ as in Definition~\ref{defn:hyperconnmorph}. We have not been able to prove this directly.
\end{rem}

From Definition~\ref{defn:resolinchains} and Lemma~\ref{lem:hyperresol} we deduce

\begin{thm}\label{thm:hyperresol}
The hypercohomology construction $(\widehat{\mathcal{E}xt}_{\mathcal{C}}^{\bullet}(A, -), \widehat{\Delta}^{\bullet})$ forms a Mislin completion of $(\mathrm{Ext}_{\mathcal{C}}^{\bullet}(A, -), \delta^{\bullet})$. More specifically,
\[ \sigma^{\bullet} = \vartheta^{\bullet} \circ \zeta^{\bullet}: \big(\widehat{\mathrm{Ext}}_{\mathcal{C}}^{\bullet}(A, -), \widehat{\delta}^{\bullet}\big) \rightarrow \big(\mathcal{E}xt_{Res, \mathcal{C}}^{\bullet}(A, -), \widetilde{\Delta}^{\bullet}\big) \rightarrow \big(\widehat{\mathcal{E}xt}_{\mathcal{C}}^{\bullet}(A, -), \widehat{\Delta}^{\bullet}\big) \]
is an isomorphism of cohomological functors where the isomorphisms $\sigma^n$ are taken from Definition~\ref{defn:hyperresolcohom}
\end{thm}

In~\cite[p.~21--22]{guo23}, S.\! Guo and L.\! Liang establish an isomorphism $\widehat{\mathcal{E}xt}_{\mathcal{C}}^n(A, B) \rightarrow BC_{\mathcal{C}}^n(A, B)$ from the terms of the hypercohomology construction to the ones of the naïve construction for every $n \in \mathbb{Z}$. As it is used in the construction of Yoneda products in~\cite[Theorem~6.6]{ghe24}, we present it below and prove that it specifically forms an isomorphism of cohomological functors.

\begin{defn}\label{defn:hypernaive}
(\cite[p.~21--22]{guo23}) For any $A, B \in \mathrm{obj}(\mathcal{C})$ and $n \in \mathbb{Z}$ let the map
\[ \rho^n: \widehat{\mathcal{E}xt}_{\mathcal{C}}^n(A, B) \rightarrow BC_{\mathcal{C}}^n(A, B) \]
be given as follows. If $x = \varphi_{\bullet+n}+\widehat{\mathrm{Null}}_{\mathrm{Ch}(\mathcal{C})}(A[n]_{\bullet}, B_{\bullet})$ is an element in $\widehat{\mathcal{E}xt}_{\mathcal{C}}^n(A, B)$, then there exists $K \in \mathbb{N}_0$ such that $(\varphi_{m+n}: A_{m+n} \rightarrow B_m)_{m \geq 2K}$ forms a chain map. Denote by $\widetilde{\varphi}_{2K+n}: \widetilde{A}_{2K+n} \rightarrow \widetilde{B}_{2K}$ the morphism induced between the corresponding syzygies. We set $\rho^n(x) \in BC_{\mathcal{C}}^n(A, B)$ to be the element to which the element $\widetilde{\varphi}_{2K}+\mathcal{P}_{\mathcal{C}}(\widetilde{A}_{2K+n}, \widetilde{B}_{2K})$ in $[\widetilde{A}_{2K+n}, B_{2K}]_{\mathcal{C}}$ is mapped in the direct limit.
\end{defn}

\begin{lem}\label{lem:hypernaive}
The maps $\rho^n$ form an isomorphism of cohomological functors such that
\[ \rho^{\bullet} = \beta^{\bullet} \circ (\sigma^{\bullet})^{-1}: (\widehat{\mathcal{E}xt}_{\mathcal{C}}^{\bullet}(A, -), \widehat{\Delta}^{\bullet}) \rightarrow (\mathrm{Ext}_{Res, \mathcal{C}}^n(A, -), \widetilde{\delta}^{\bullet}) \rightarrow (BC_{\mathcal{C}}^{\bullet}(A, -), \tau^{\bullet}) \]
where $\beta^{\bullet}$ is taken from Theorem~\ref{thm:resolbenson} and $\sigma^{\bullet}$ from Theorem~\ref{thm:hyperresol}.
\end{lem}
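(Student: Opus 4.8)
The plan is to verify that the two displayed composite maps agree, which immediately gives that $\rho^{\bullet}$ is an isomorphism of cohomological functors since both $\beta^{\bullet}$ (Theorem~\ref{thm:resolbenson}) and $\sigma^{\bullet}$ (Corollary~\ref{cor:hyperresolfunctor}) are already known to be such. So the real content is the identity $\rho^n = \beta^n(B) \circ (\sigma^n(B))^{-1}$ of natural transformations for each fixed $n \in \mathbb{Z}$; once this is established for objects, naturality and compatibility with connecting homomorphisms are inherited from $\beta^{\bullet}$ and $(\sigma^{\bullet})^{-1}$ for free. Equivalently, since $\sigma^n = \vartheta^n \circ \zeta^n$, it suffices to show that the diagram
\begin{center}
\begin{tikzcd}
  \widehat{\mathcal{E}xt}_{\mathcal{C}}^n(A, B) \arrow[r, "(\vartheta^n)^{-1}"] \arrow[rrd, "\rho^n"'] & \mathcal{E}xt_{Res, \mathcal{C}}^n(A, B) \arrow[r, "(\zeta^n)^{-1}"] & \widehat{\mathrm{Ext}}_{\mathcal{C}}^n(A, B) \arrow[d, "\beta^n(B)"] \\
  & & BC_{\mathcal{C}}^n(A, B)
\end{tikzcd}
\end{center}
commutes, which I would unwind at the level of explicit representatives using the definitions of $\vartheta_n^{2k}(B)$ (Lemma~\ref{lem:vogelresol}), $\zeta_{n+k}$ (before Proposition~\ref{prop:hyperordinary}) and $\beta_{n+k}(\widetilde B_k)$ (before Lemma~\ref{lem:betisomorphism}).

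First I would trace an element through the chase. Take $x = \varphi_{\bullet+n} + \widehat{\mathrm{Null}}_{\mathrm{Ch}(\mathcal{C})}(A[n]_\bullet, B_\bullet) \in \widehat{\mathcal{E}xt}_{\mathcal{C}}^n(A, B)$ and choose $K \in \mathbb{N}_0$ with $(\varphi_{m+n})_{m \geq 2K}$ an honest chain map. By the proof of Lemma~\ref{lem:vogelresol} (the surjectivity argument), a preimage of $x$ under $\vartheta_n^{2K}(B)$ is the chain map $\psi_{\bullet+n+2K} \in \mathrm{Hom}_{\mathrm{Ch}(\mathcal{C})}(A[n+2K]_\bullet, B_\bullet^{+2K})$ given by $\psi_{m+n+2K} = \varphi_{m+n+2K}$ for $m \geq 0$ and zero otherwise, representing a class in $\mathcal{E}xt_{\mathcal{C}}^{n+2K}(A, \widetilde B_{2K})$. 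Applying $(\zeta_{n+2K})^{-1}$ to this chain-map class: by the surjectivity half of the proof of Proposition~\ref{prop:hyperordinary}, $\zeta_{n+2K}$ sends $b \circ \psi_0 + \mathrm{Im}(\mathrm{Hom}_{\mathcal{C}}(\partial_{n+2K}, \widetilde B_{2K}))$ to the class of $\psi_{\bullet+n+2K}$, where $b \colon B_0^{+2K} = B_{2K} \to \widetilde B_{2K}$ is the augmentation (i.e.\ the syzygy quotient $\pi_{2K}$). Finally, $\beta_{n+2K}(\widetilde B_{2K})$ applied to $b \circ \psi_0 + \mathrm{Im}(\dots)$ is, by Diagram~\ref{diag:triangle}, the class of $\alpha_{n+2K}(\widetilde B_{2K})(b \circ \psi_0) + \mathcal{P}_{\mathcal{C}}(\widetilde A_{n+2K}, \widetilde B_{2K})$; and $\alpha_{n+2K}(\widetilde B_{2K})(b \circ \psi_0)$ is precisely the morphism $\widetilde A_{n+2K} \to \widetilde B_{2K}$ factoring $b \circ \psi_0 = \pi_{2K} \circ \psi_0 \colon A_{n+2K} \to \widetilde B_{2K}$ through $\pi_{n+2K}$, which is exactly the morphism $\widetilde\varphi_{2K+n}$ induced between syzygies by the chain map $(\varphi_{m+n})_{m \geq 2K}$. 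This is by Definition~\ref{defn:hypernaive} a representative of $\rho^n(x)$ in $[\widetilde A_{n+2K}, \widetilde B_{2K}]_{\mathcal{C}}$, and I would conclude that the two images coincide in the direct limit $BC_{\mathcal{C}}^n(A, B)$, noting that both $\vartheta^n$ and the passage through $\zeta_{n+2K}, \beta_{n+2K}$ are compatible with the transition maps of the directed systems (this is the content of Lemma~\ref{lem:vogelresol}, Definition~\ref{defn:chainresol}, and Lemma~\ref{lem:betisomorphism}), so the identity is independent of the choice of $K$.

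Having shown $\rho^n = \beta^n \circ (\sigma^n)^{-1}$ pointwise, the remaining assertions follow formally: $(\sigma^{\bullet})^{-1} \colon (\widehat{\mathcal{E}xt}_{\mathcal{C}}^{\bullet}(A, -), \widehat\Delta^{\bullet}) \to (\widehat{\mathrm{Ext}}_{\mathcal{C}}^{\bullet}(A, -), \widehat\delta^{\bullet})$ is an isomorphism of cohomological functors as the inverse of the one from Corollary~\ref{cor:hyperresolfunctor} (factoring through $(\mathrm{Ext}_{Res, \mathcal{C}}^{\bullet}(A, -), \widetilde\delta^{\bullet})$ via $(\zeta^{\bullet})^{-1}$ and $(\vartheta^{\bullet})^{-1}$), and $\beta^{\bullet}$ is one by Theorem~\ref{thm:resolbenson}; a composite of isomorphisms of cohomological functors is again one, so $\rho^{\bullet}$ is an isomorphism of cohomological functors, in particular natural and commuting with all connecting homomorphisms, and the stated factorisation $\rho^{\bullet} = \beta^{\bullet} \circ (\sigma^{\bullet})^{-1}$ holds by construction. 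The main obstacle I anticipate is purely bookkeeping: matching the sign conventions and index shifts in $A[n+2K]_\bullet$ versus $A[n]_\bullet$ and in $B_\bullet^{+2K}$ versus $B_\bullet$, and checking that the chosen $K$-level representatives are genuinely compatible with the directed-system transition maps $t_{\widetilde A_{n+k}, \widetilde B_k}$ and $\Delta^{n+k}$ so that the pointwise identity descends correctly to the direct limits — but all the needed compatibilities have already been verified in Lemma~\ref{lem:vogelresol} and Lemma~\ref{lem:betisomorphism}, so no genuinely new difficulty arises.
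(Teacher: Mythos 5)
Your proposal is correct and follows essentially the same approach as the paper: reduce to the pointwise identity $\rho^n = \beta^n \circ (\sigma^n)^{-1}$, then chase a representative almost chain map $\varphi_{\bullet+n}$ through $(\vartheta^n)^{-1}$, $(\zeta^n)^{-1}$ and $\beta^n$ using the explicit surjectivity descriptions from the proofs of Lemma~\ref{lem:vogelresol}, Proposition~\ref{prop:hyperordinary} and Lemma~\ref{lem:betisomorphism}, and observe that the resulting class in $[\widetilde A_{n+2K}, \widetilde B_{2K}]_{\mathcal{C}}$ is precisely the syzygy-induced morphism $\widetilde\varphi_{2K+n}$ from Definition~\ref{defn:hypernaive}. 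One small note: your version of the intermediate class $\pi_{2K} \circ \psi_0 \colon A_{n+2K} \to \widetilde B_{2K}$ is the correctly indexed form; the paper's rendering of this as $\pi_{n+2k} \circ \varphi_{n+2k}$ appears to carry a minor index typo, since $\pi_m$ there denotes $B_m \to \widetilde B_m$ and only $\pi_{2k}$ composes with $\varphi_{n+2k} \colon A_{n+2k} \to B_{2k}$.
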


\begin{proof}
Since $\beta^{\bullet}$ and $\sigma^{\bullet}$ are already isomorphisms of cohomological functors, it suffices to show that $\rho^n = \beta^n \circ (\sigma^{n})^{-1}$ for every $n \in \mathbb{Z}$. This follows from Definition~\ref{defn:hyperresolcohom} and Definition~\ref{defn:hypernaive}.
\end{proof}

We close our account on constructions of Mislin completions by posing a question. At the start of Subsection~\ref{subsec:hyperindconn} we have seen that any completed Ext-group $\widehat{\mathcal{E}xt}_{\mathcal{C}}^n(A, B)$ can be defined as the $n^{\text{th}}$ cohomology group of the Vogel complex $\mathrm{Vog}_{\mathcal{C}}^n(A_{\bullet}, B_{\bullet})_{\bullet}$. Therefore, one can define connecting homomorphisms for the hypercohomology construction in a different manner as the following lemma demonstrates.

\begin{lem}
(\cite[Proposition~4.8]{guo23}) Let $0 \rightarrow B \rightarrow C \rightarrow D \rightarrow 0$ be a short exact sequence in $\mathcal{C}$ and let $0 \rightarrow B_{\bullet} \rightarrow C_{\bullet} \rightarrow D_{\bullet} \rightarrow 0$ be a short exact sequence of chain complexes obtained by the Horseshoe Lemma~\cite[p.~37]{wei94}. According to the Snake Lemma~\cite[p.~11--12]{wei94}, there are associated connecting homomorphisms
\[ \overline{\delta}^n: \widehat{\mathcal{E}xt}_{\mathcal{C}}^n(A, D) \rightarrow \widehat{\mathcal{E}xt}_{\mathcal{C}}^{n+1}(A, B) \quad \text{and} \quad \overline{\delta}^n: \widehat{\mathcal{E}xt}_{\mathcal{C}}^n(B, A) \rightarrow \widehat{\mathcal{E}xt}_{\mathcal{C}}^{n+1}(D, A) \, . \]
In particular, $\big(\widehat{\mathcal{E}xt}_{\mathcal{C}}^{\bullet}(A, -), \overline{\delta}^{\bullet}\big)$ and $\big(\widehat{\mathcal{E}xt}_{\mathcal{C}}^{\bullet}(-, A), \overline{\delta}^{\bullet}\big)$ form cohomological functors.
\end{lem}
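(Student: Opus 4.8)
## Proof proposal

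The plan is to invoke the standard machinery for turning a short exact sequence of chain complexes into a long exact sequence in cohomology, applied to the Vogel complex. Recall from Notation~\ref{notn:chainext} that $\widehat{\mathcal{E}xt}_{\mathcal{C}}^n(A, B)$ is the $n^{\text{th}}$ cohomology group of the Vogel complex $\mathrm{Vog}_{\mathcal{C}}(A_{\bullet}, B_{\bullet})_{\bullet}$, which was defined as the quotient $\mathrm{Hyp}_{\mathcal{C}}(A_{\bullet}', B_{\bullet}')_{\bullet}/ \mathrm{Bdd}_{\mathcal{C}}(A_{\bullet}', B_{\bullet}')_{\bullet}$ in Section~\ref{sec:constructions}. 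Given a short exact sequence $0 \rightarrow B \rightarrow C \rightarrow D \rightarrow 0$ in $\mathcal{C}$, the first step is to apply the Horseshoe Lemma~\cite[p.~37]{wei94} to lift it to a short exact sequence $0 \rightarrow B_{\bullet} \rightarrow C_{\bullet} \rightarrow D_{\bullet} \rightarrow 0$ of projective resolutions. Since each $B_k$ is projective (hence the sequence $0 \rightarrow B_k \rightarrow C_k \rightarrow D_k \rightarrow 0$ splits in each degree), applying $\mathrm{Hom}_{\mathcal{C}}(A[n]_m, -)$ preserves exactness for every $m$; consequently one obtains a short exact sequence of cochain complexes
\[ 0 \rightarrow \mathrm{Hyp}_{\mathcal{C}}(A_{\bullet}', B_{\bullet}')_{\bullet} \rightarrow \mathrm{Hyp}_{\mathcal{C}}(A_{\bullet}', C_{\bullet}')_{\bullet} \rightarrow \mathrm{Hyp}_{\mathcal{C}}(A_{\bullet}', D_{\bullet}')_{\bullet} \rightarrow 0 \]
and likewise for the bounded subcomplexes $\mathrm{Bdd}_{\mathcal{C}}(A_{\bullet}', -)_{\bullet}$. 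Taking the quotient, the Nine Lemma (or a direct diagram chase) yields a short exact sequence of Vogel complexes $0 \rightarrow \mathrm{Vog}_{\mathcal{C}}(A_{\bullet}, B_{\bullet})_{\bullet} \rightarrow \mathrm{Vog}_{\mathcal{C}}(A_{\bullet}, C_{\bullet})_{\bullet} \rightarrow \mathrm{Vog}_{\mathcal{C}}(A_{\bullet}, D_{\bullet})_{\bullet} \rightarrow 0$.

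Next I would apply the Snake Lemma~\cite[p.~11--12]{wei94} to this short exact sequence of cochain complexes to produce the connecting homomorphisms $\overline{\delta}^n: \widehat{\mathcal{E}xt}_{\mathcal{C}}^n(A, D) \rightarrow \widehat{\mathcal{E}xt}_{\mathcal{C}}^{n+1}(A, B)$ and the long exact sequence, which gives Axiom~\ref{axm:les}. For the contravariant variable one argues symmetrically: lift $0 \rightarrow B \rightarrow C \rightarrow D \rightarrow 0$ to $0 \rightarrow B_{\bullet} \rightarrow C_{\bullet} \rightarrow D_{\bullet} \rightarrow 0$, apply $\mathrm{Hom}_{\mathcal{C}}(-, A_{\bullet})$ degreewise (again exact in each degree by projectivity), pass to the Vogel quotient, and invoke the Snake Lemma to obtain $\overline{\delta}^n: \widehat{\mathcal{E}xt}_{\mathcal{C}}^n(B, A) \rightarrow \widehat{\mathcal{E}xt}_{\mathcal{C}}^{n+1}(D, A)$. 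That the $\overline{\delta}^n$ are natural (Axiom~\ref{axm:delta}) and that the families $\widehat{\mathcal{E}xt}_{\mathcal{C}}^n(-, -)$ are additive functors has already been established in Proposition~\ref{prop:hyperbifunctor}, so the only remaining point is naturality of the connecting homomorphism, which follows from the functoriality of the Snake Lemma construction applied to a morphism of short exact sequences of Vogel complexes. One must also check independence of the choice of Horseshoe lift, but this follows by the same argument as in Proposition~\ref{prop:hyperbifunctor}: any two lifts are chain homotopic, inducing the identity on cohomology.

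The main obstacle I anticipate is the bookkeeping around exactness of the Horseshoe-lifted sequence in the product-versus-coproduct situation: $\mathrm{Hyp}_{\mathcal{C}}$ involves a product $\prod_{k} \mathrm{Hom}_{\mathcal{C}}(A_{k+n}', B_k')$ and $\mathrm{Bdd}_{\mathcal{C}}$ a coproduct, and one needs that both products and coproducts of the degreewise-split short exact sequences remain exact. In $\mathbf{Ab}$ this is automatic since the sequence splits in each degree, so the product (resp. coproduct) of split sequences splits; the quotient Vogel complex then inherits degreewise exactness. The subtler part is verifying that the connecting homomorphism $\overline{\delta}^n$ does not depend on the Horseshoe lift used to realize the short exact sequence of resolutions, and — although not required by the statement — that it agrees with $\widehat{\Delta}^n$ from Definition~\ref{defn:hyperconnmorph} under the isomorphism $\vartheta^{\bullet}$ of Theorem~\ref{thm:hyperresol}; I would note that the Remark following Theorem~\ref{thm:hyperresol} already records that $\widehat{\Delta}^n$ is independent of auxiliary choices, and that the two connecting maps can be identified by comparing the Snake Lemma construction with the explicit chain-map formula, but this comparison is not strictly needed for the present lemma and can be deferred.
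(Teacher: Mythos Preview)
Your proposal is correct and follows exactly the approach indicated in the statement itself. Note that the paper does not supply its own proof of this lemma: it is quoted from \cite[Proposition~4.8]{guo23}, and the statement already contains the full outline (Horseshoe Lemma to lift, Snake Lemma for the connecting homomorphisms). Your elaboration---degreewise splitting from projectivity, exactness of the induced sequences on $\mathrm{Hyp}$, $\mathrm{Bdd}$ and hence $\mathrm{Vog}$, then the long exact sequence in cohomology---is the standard and intended argument.

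One small remark: your final paragraph speculates about comparing $\overline{\delta}^n$ with $\widehat{\Delta}^n$. The paper explicitly leaves this open as Question~\ref{ques:comparewithguoliang} immediately after the lemma, so you are right not to attempt it here.
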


\begin{ques}\label{ques:comparewithguoliang}
Does the connecting homomorphism $\widehat{\Delta}^{\bullet}$ constructed in Definition~\ref{defn:hyperconnmorph} agree with $\overline{\delta}^{\bullet}$? More specifically, is $\big(\widehat{\mathcal{E}xt}_{\mathcal{C}}^{\bullet}(A, -), \overline{\delta}^{\bullet}\big)$ a Mislin completion of the Ext-functors $\big(\mathrm{Ext}_{\mathcal{C}}^{\bullet}(A, -), \delta^{\bullet}\big)$?
\end{ques}

\section{Complete cohomology in condensed mathematics}\label{sec:condmaths}

As a noteworthy feature, the uniform theory developed in this paper is applicable to condensed mathematics, a powerful novel theory developed by D.\! Clausen and P.\! Scholze. Thus, we first provide an outline of the latter theory. Then we introduce Tate cohomology for all $T1$ topological groups using condensed mathematics. We also present an example of completed Ext-functors in condensed mathematics that can be established only through our theory and through no previous framework. \\

The building blocks of condensed mathematics are condensed sets that we define following D.\! Clausen and P.\! Scholze's account~\cite[Lecture~I and~II]{sch22}. In short, condensed sets can be thought as particular sheaves of sets. The reason for using sheaves is because they have excellent category theoretic properties that are fundamental for group cohomology. Further below we provide a plain and straightforward definition of $\kappa$-condensed sets after introducing the relevant Grothendieck sites of our sheaves. A profinite space is an inverse limit of finite discrete spaces, meaning that it can be assembled from finite sets in a particular manner. Let $\mathbf{Pro}$ denote the category of profinite spaces. A finite collection of (continuous) maps $\lbrace f_i: S_i \rightarrow S \rbrace_{i = 1}^n$ in $\mathbf{Pro}$ is called a covering if the induced map $\bigsqcup_{i = 1}^n S_i \rightarrow S$ is surjective. Although this turns $\mathbf{Pro}$ into a Grothendieck site, it is not well suited for sheaf theoretic purposes as it is not essentially small. A cardinal number $\kappa$ is called a strong limit cardinal if for every $\lambda < \kappa$ we also have $2^{\lambda} < \kappa$. For an uncountable strong limit cardinal $\kappa$ we define the site of $\kappa$-small profinite spaces $\mathbf{Pro}_{\kappa}$ as the full subcategory of $\mathbf{Pro}$ consisting of all profinite spaces with less than $\kappa$ clopen subsets. Note that the Grothendieck site $\mathbf{Pro}_{\kappa}$ is essentially small and thus suited for sheaf theoretic purposes. Then a $\kappa$-condensed set is a sheaf of sets on $\mathbf{Pro}_{\kappa}$. In more down-to-earth terms, a (contravariant) functor
\[ X: \mathbf{Pro}_{\kappa}^{\mathrm{op}} \rightarrow \mathbf{Set} \]
is a $\kappa$-condensed set if the following two conditions are satisfied.
\begin{enumerate}
    \item For every finite collection $\lbrace S_i \rbrace_{i = 1}^n$ of objects in the category $\mathbf{Pro}_{\kappa}$ there is a bijection $X(\bigsqcup_{i = 1}^n S_i) \rightarrow \prod_{i = 1}^n X(S_i)$.
    \item Let $f: T \rightarrow S$ be a (continuous) surjection in $\mathbf{Pro}_{\kappa}$ and denote the projections from the fiber product by $p_1, p_2: T \times_S T \rightarrow T$. Then the function $X(f): X(S) \rightarrow X(T)$ is injective with image
\[ \mathrm{Im}(X(f)) = \lbrace x \in X(T) \mid X(p_1)(x) = X(p_2)(x) \in X(T \times_S T) \rbrace \, . \]
\end{enumerate}
One can think of $X$ as encoding a topological space and of $X(S)$ as the continuous maps ``$S \rightarrow X$'' from a $\kappa$-profinite space. If $\kappa < \kappa'$ is another strong limit cardinal, then there is a functor
\[ \mathrm{Cond}_{\kappa}(\mathbf{Set}) \rightarrow \mathrm{Cond}_{\kappa'}(\mathbf{Set}) \, . \]
It is fully faithful because we have used strong limit cardinals. This means that one can extend any $\kappa$-condensed set to a $\kappa'$-condensed set. If $K$ denotes the class of uncountable strong limit cardinals, then the category of condensed sets is defined as the colimit category
\[ \mathrm{Cond}(\mathbf{Set}) := \varinjlim_{\kappa \in K} \mathrm{Cond}_{\kappa}(\mathbf{Set}) \, . \]
Hence, a condensed set is an equivalence class of $\kappa$-condensed sets and thus, an equivalence class of sheaves of sets. Although $\mathrm{Cond}(\mathbf{Set})$ does not form the category of sheaves over any site~\cite[Remark~2.12]{sch19}, it nevertheless inherits good category theoretic properties from the categories $\mathrm{Cond}_{\kappa}(\mathbf{Set})$~\cite[p.~13]{sch22}. \\

We define condensed groups and modules, we demonstrate how topological groups and modules can be turned into condensed objects and we present relevant category theoretic properties. By~\cite[p.~7]{sch19} condensed groups are defined as equivalence classes of $\kappa$-condensed groups, meaning sheaves of groups on the site $\mathbf{Pro}_{\kappa}$. One defines other condensed algebraic structures such as condensed rings and condensed modules analogously. According to~\cite[p.~15/16 and Proposition~1.7]{sch19} we can establish the following relation with topological spaces. If $T1\text{-}\mathbf{Top}$ denotes the category of $T1$ topological spaces, then there is a faithful functor $T1\text{-}\mathbf{Top} \rightarrow \mathrm{Cond}(\mathbf{Set})$ taking any $Y$ to its condensate
\[ \underline{Y}: \mathbf{Pro}_{\kappa}^{\mathrm{op}} \rightarrow \mathbf{Set}, \; S \mapsto \lbrace f: S \rightarrow Y \text{ continuous} \rbrace \]
where $\kappa$ is chosen to be a sufficiently large strong limit cardinal. This functor does not depend on $\kappa$ as evidenced by the proofs of Proposition~2.9 and Proposition~2.15 in~\cite{sch19}. Thus, one can replace any $T1$ topological space by its condensate and only work with the latter. Condensates of $T1$ topological groups (resp.\! rings, modules etc) are condensed groups (resp.\! rings, modules etc)~\cite[p.~8]{sch19}. In the category of condensed abelian groups $\mathrm{Cond}(\mathbf{Ab})$ all limits and colimits exist where arbitrary products, arbitrary direct sums and direct limits are exact~\cite[Theorem~1.10]{sch19}. According to~\cite[p.~12]{sch19}, $\mathrm{Cond}(\mathbf{Ab})$ has enough projectives. Following the source, these assertions also hold for the category of condensed $\mathcal{R}$-modules $\mathrm{Cond}(Mod(\mathcal{R}))$ where $\mathcal{R}$ is a condensed ring. This is relevant to our theory as it constructs Mislin completions for functors $T^{\bullet}: \mathcal{C} \rightarrow \mathcal{D}$ where $\mathcal{C}$ has enough projectives and in $\mathcal{D}$ all countable direct limits exists and are exact. Therefore, one can construct Mislin completions using condensed modules. \\

We construct free condensed modules in order to define the above mentioned completed Ext-functors that can be established only through our theory. Let $S$ be a projective object in $\mathbf{Pro}$. Then the free condensed $\mathcal{R}$-module $\mathcal{R}[\underline{S}]$ is the sheafification of the presheaf of $\mathcal{R}$-modules
\[ \mathbf{Pro}_{\kappa}^{\mathrm{op}} \rightarrow \mathbf{Ab}, T \mapsto \mathcal{R}(T)[\underline{S}(T)] \]
where $\mathcal{R}(T)[\underline{S}(T)]$ denotes the free $\mathcal{R}(T)$-module over the set $\underline{S}(T)$ and $\kappa$ is a sufficiently large strong limit cardinal~\cite[p.~12]{sch19}. Since the Grothendieck site $\mathbf{Pro}_{\kappa}$ is essentially small, sheafification exists and the construction of $\mathcal{R}[\underline{S}]$ is independent of $\kappa$ by the proof of~\cite[Proposition~2.9]{sch19}. The condensed modules $\mathcal{R}[\underline{S}]$ are compact projective generators, meaning that for every condensed $\mathcal{R}$-module $A$ there is a collection of projective profinite spaces $S_i$ such that there is an epimorphism $\bigoplus_{i \in I} \mathcal{R}[\underline{S_i}] \rightarrow A$~\cite[p.~12]{sch19}. For condensed $\mathcal{R}$-modules $A$, $B$ one does not only have the ``usual'' Hom-set
\[ \mathrm{Hom}_{\mathrm{Cond}(Mod(\mathcal{R}))}(A, B) \in \mathrm{obj}(\mathbf{Ab}) \, , \]
but also an internal Hom-set which is constructed in~\cite[p.~13]{sch19}. For any condensed $\mathcal{R}$-module $C$ the condensed abelian group $C \otimes_{\mathcal{R}} A$ is the sheafification of
\[ \mathbf{Pro}_{\kappa}^{\mathrm{op}} \rightarrow \mathbf{Ab}, T \mapsto C(T) \otimes_{\mathcal{R}(T)} A(T) \, . \]
Then define the internal Hom-set
\[ \underline{\mathrm{Hom}}_{\mathrm{Cond}(Mod(\mathcal{R}))}(A, B) \in \mathrm{obj}(\mathrm{Cond}(\mathbf{Ab})) \]
by
\[ \mathbf{Pro}_{\kappa}^{\mathrm{op}} \rightarrow \mathbf{Ab}, T \mapsto \mathrm{Hom}_{\mathrm{Cond}(Mod(\mathcal{R}))}(A \otimes_{\mathcal{R}} \mathcal{R}[\underline{T}], B) \, . \]
By definition, it satisfies the tensor-Hom adjunction
\[ \mathrm{Hom}_{\mathrm{Cond}(Mod(\mathcal{R}))}(C, \underline{\mathrm{Hom}}_{\mathrm{Cond}(Mod(\mathcal{R}))}(A, B)) \cong \mathrm{Hom}_{\mathrm{Cond}(Mod(\mathcal{R}))}(C \otimes_{\mathcal{R}} A, B) \, . \]
Because the category $\mathrm{Cond}(Mod(\mathcal{R}))$ has enough projectives, we can define the unenriched Ext-functors
\[ \mathrm{Ext}_{\mathrm{Cond}(Mod(\mathcal{R}))}^{\bullet}(A, -): \mathrm{Cond}(Mod(\mathcal{R})) \rightarrow \mathbf{Ab} \]
and the internal Ext-functors
\[ \underline{\mathrm{Ext}}_{\mathrm{Cond}(Mod(\mathcal{R}))}^{\bullet}(A, -): \mathrm{Cond}(Mod(\mathcal{R})) \rightarrow \mathrm{Cond}(\mathbf{Ab}) \]
to be the derived functors of $\mathrm{Hom}_{\mathrm{Cond}(Mod(\mathcal{R}))}(A, -)$ and $\underline{\mathrm{Hom}}_{\mathrm{Cond}(Mod(\mathcal{R}))}(A, -)$. \\

These Ext-functors can be used to define cohomology of a condensed group $\mathcal{G}$. Following~\cite[p.~2--3]{ans20} one can form the condensed group ring $\mathcal{R}[\mathcal{G}]$ such that the category $\mathrm{Cond}(Mod(\mathcal{R}[\mathcal{G}]))$ is equivalent to the subcategory of $\mathrm{Cond}(\mathcal{R})$ of all $\mathcal{R}$-modules $M$ with a $\mathcal{G}$-action $\mathcal{G} \times M \rightarrow M$. By~\cite[p.~5/8]{ans20}, one can define condensed unenriched group cohomology $H_{\mathcal{R}}^{\bullet}(\mathcal{G}, -)$ and the condensed internal group cohomology $\underline{H}_{\mathcal{R}}^{\bullet}(\mathcal{G}, -)$ as the Ext-functors $\mathrm{Ext}_{\mathrm{Cond}(Mod(\mathcal{R}[\mathcal{G}]))}^{\bullet}(\mathcal{R}, -)$ and $\underline{\mathrm{Ext}}_{\mathrm{Cond}(Mod(\mathcal{R}[\mathcal{G}]))}^{\bullet}(\mathcal{R}, -)$. By the above, we conclude

\begin{thm}\label{thm:condgroupcohom}
Let $\mathcal{R}$ be a condensed ring and $A$ a condensed $\mathcal{R}$-module. Then there exist completed condensed unenriched Ext-functors $\widehat{\mathrm{Ext}}_{\mathrm{Cond}(Mod(\mathcal{R}))}^{\bullet}(A, -)$ and completed condensed internal Ext-functors $\widehat{\underline{\mathrm{Ext}}}_{\mathrm{Cond}(Mod(\mathcal{R}))}^{\bullet}(A, -)$. If $\mathcal{G}$ is a condensed group, then there exists complete condensed unenriched group cohomology $\widehat{H}_{\mathcal{R}}^{\bullet}(\mathcal{G}, -)$ and complete condensed internal group cohomology $\widehat{\underline{H}}_{\mathcal{R}}^{\bullet}(\mathcal{G}, -)$. In particular, there exists group cohomology and complete cohomology for any $T1$ topological group $G$ defined over its condensate $\underline{G}$.
\end{thm}

\begin{rem}\label{rem:differencetoguoliang}
Because both $\widehat{\underline{\mathrm{Ext}}}_{\mathrm{Cond}(Mod(\mathcal{R}))}^{\bullet}(A, -)$ and $\widehat{\underline{H}}_{\mathcal{R}}^{\bullet}(\mathcal{G}, -)$ have as codomain category $\mathrm{Cond}(\mathbf{Ab})$, they are an example of completed Ext-functors that are not covered under the previous frameworks such as the one by A.\! Beligiannis and I.\! Reiten in~\cite{bel07}, the one by S.\! Guo and L.\! Liang's in~\cite{guo23} and the one by J.\! Hu et al.\! in~\cite{hu21}.
\end{rem}

We conclude this paper by presenting an instance where the complete cohomology of a profinite group $G$ is isomorphic to the complete cohomology of its condensate $\underline{G}$. For this purpose, we introduce a special class of condensed modules. Namely, in the same manner as a completed group ring of a profinite group over a profinite ring can be seen as a completion of an ordinary group ring~\cite[Proposition~7.1.2]{wil98}, one can introduce a notion of completion for condensed modules over a condensed ring $\mathcal{R}$. We call a condensed ring admitting such a form of completion an analytic ring and a condensed module satisfying this completion a solid module. More formally, let $\mathbf{ProjPro}$ denote the full subcategory of $\mathbf{Pro}$ consisting of projective objects. Then an analytic ring $\mathcal{A}$ comes with a particular functor
\[ c_{\mathcal{A}}: \mathbf{ProjPro} \rightarrow \mathrm{Cond}(Mod(\mathcal{A})), \; S \mapsto \mathcal{A}[S]_{\blacksquare}  \, . \]
and a condensed $\mathcal{A}$-module homomorphism $C_{\mathcal{A}}(S): \mathcal{A}[\underline{S}] \rightarrow \mathcal{A}[\underline{S}]_{\blacksquare}$ for every projective profinite space $S$~\cite[p.~44]{sch19}. One can think of $c_{\mathcal{A}}$ as assigning to every projective profinite space $S$ a canonical choice of free solid $\mathcal{A}$-module $\mathcal{A}[S]_{\blacksquare}$. Let $\mathrm{Solid}(\mathcal{A})$ denote full subcategory of $\mathrm{Cond}(Mod(\mathcal{A}))$ for all whose objects $M$ the homomorphism
\[ \mathrm{Hom}_{\mathrm{Cond}(Mod(\mathcal{A}))}(C_{\mathcal{A}}(S), M): \mathrm{Hom}_{\mathrm{Cond}(Mod(\mathcal{A}))}(\mathcal{A}[S]_{\blacksquare}, M) \rightarrow M(S) \]
is an isomorphism for every $S \in \mathbf{ProjPro}$. According to~\cite[Proposition~7.5]{sch19}, $\mathrm{Solid}(\mathcal{A})$ is an abelian category stable under limits, colimits and extensionsin which the objects $\mathcal{A}[S]_{\blacksquare}$ are compact projective generators. In particular, we can define the Ext functors
\[ \mathrm{Ext}_{\mathrm{Solid}(\mathcal{A})}^{\bullet}(M, -): \mathrm{Solid}(\mathcal{A}) \rightarrow \mathbf{Ab} \]
as the derived functors of $\mathrm{Hom}_{\mathrm{Solid}(\mathcal{A})}(M, -)$. Moreover, there exists an extension of the morphisms $C_{\mathcal{A}}(S): \mathcal{A}[\underline{S}] \rightarrow \mathcal{A}[\underline{S}]_{\blacksquare}$ to a functor $\mathrm{Cond}(Mod(\mathcal{A})) \rightarrow \mathrm{Solid}(\mathcal{A})$ according to~\cite[Proposition~7.5]{sch19}. We call this functor solidification and need it to establish the isomorphism from the complete cohomology of a profinite group to the complete cohomology of its condensate.

\begin{exl}\label{exl:analyticrings}
Let $S = \varprojlim_{i \in I} S_i$ be a profinite space and define the condensed $\underline{\mathbb{Z}}$-module $\mathbb{Z}[S]^{\blacksquare} := \varprojlim_{i \in I} \underline{\mathbb{Z}}[\underline{S_i}]$. Then the condensed ring $\underline{\mathbb{Z}}$ together with the assignment $S \mapsto \mathbb{Z}[S]^{\blacksquare}$ forms an analytic ring~\cite[p.~46]{sch19}. Denote by $(-)^{\blacksquare}: \mathrm{Cond}(\mathbf{Ab}) \rightarrow \mathrm{Solid}(\underline{\mathbb{Z}})$ the associated solidification functor. If $\mathcal{R}$ is a condensed ring, then $(\mathcal{R}, S \mapsto (\mathcal{R}[\underline{S}])^{\blacksquare})$ is an analytic ring~\cite[Proposition~3.6]{tan24}.
\end{exl}

\begin{lem}\label{lem:solidandctscohom}
Let $G$ be a profinite group, $L$ be profinite ring and denote by $L\llbracket G \rrbracket$ the completed group ring. Endow the condensed ring $\underline{L\llbracket G \rrbracket}$ with an analytic ring structure as in Example~\ref{exl:analyticrings}. As the $\underline{L\llbracket G \rrbracket}$-module $\underline{L}$ is a solid by~\cite[pp.~14--15]{tan24}, one defines solid group cohomology $H_{\mathrm{Solid}(L)}^{\bullet}(\underline{G}, -)$ as the Ext-functor $\mathrm{Ext}_{\mathrm{Solid}(\underline{L\llbracket G \rrbracket})}^{\bullet}(\underline{L}, -)$. Then complete solid cohomology $\widehat{H}_{\mathrm{Solid}(L)}^{\bullet}(\underline{G}, -)$ is isomorphic as a cohomological functor to complete continuous cohomology $\widehat{H}_L^{\bullet}(G, -)$.
\end{lem}

\begin{proof}
Let $PMod(L\llbracket G \rrbracket)$ denote the category of profinite $L\llbracket G \rrbracket$-modules. Then the result follows from~\cite[Theorem~3.14]{tan24} because taking condensates constitutes an exact functor $PMod(L\llbracket G \rrbracket) \rightarrow \mathrm{Solid}(\underline{R\llbracket G \rrbracket})$ that preserves projectives.
\end{proof}

\section*{Acknowledgements}

A special thanks goes to Peter H. Kropholler for sharing his expertise with me. He has discussed examples of complete cohomology groups of discrete groups with me and pointed out his joint paper with Jonathan Cornick ``On Complete Resolutions''. In particular, he introduced me to condensed mathematics. I would also like to express my gratitude to Emma Brink for her great help in condensed mathematics. \\

I would like to acknowledge Alejandro Adem, Jon F.\! Carlson, Joel Friedman, Kalle Karu, Zinovy Reichstein and Ben Williams who provided invaluable suggestions as part of assessing my PhD thesis. Moreover, I am thankful to Andrew Fisher for pointing out the paper ``Complete Homology over Associative Rings'' by Olgur Celikbas et al. Lastly, I am grateful to Ged Corob Cook for insightful discussions.

\addcontentsline{toc}{section}{References}
\renewcommand{\bibname}{References}
\bibliographystyle{plain}  
\bibliography{refs}        

\end{document}